\providecommand{\U}[1]{\protect\rule{.1in}{.1in}}
\providecommand{\U}[1]{\protect\rule{.1in}{.1in}}
\newtheorem{theorem}{Theorem}
\newtheorem{claim}[theorem]{Claim}
\newtheorem{corollary}[theorem]{Corollary}
\newtheorem{lemma}[theorem]{Lemma}
\newtheorem{openquestion}[theorem]{Open Question}
\newtheorem{proposition}[theorem]{Proposition}
\newtheorem{remark}[theorem]{Remark}
\newenvironment{proof}[1][Proof]{\noindent\textbf{#1} }{\ \rule{0.5em}{0.5em}}
\begin{document}
 \author{Swarnendu Sil\thanks{swarnendu.sil@fim.math.ethz.ch}}
\title{Nonlinear Stein theorem for differential forms}
\maketitle 
\tableofcontents 
\begin{abstract}
	We prove that if  $u$ is an $\mathbb{R}^{N}$-valued $W^{1,p}_{loc}$ differential $k$-form  with $\delta \left( a(x) \lvert du \rvert^{p-2} du \right) \in L^{(n,1)}_{loc}$ in a 
	domain of $\mathbb{R}^{n}$ for $N \geq 1,$ $n \geq 2,$ $0 \leq k \leq n-1, $ $1 < p < \infty, $ with uniformly positive, bounded, Dini continuous scalar function $a$, 
	then $du$ is continuous. This generalizes the classical result by Stein in the scalar case and the work of Kuusi-Mingione for the $p$-Laplacian type systems. We also discuss H\"{o}lder, BMO and VMO regularity estimates for such systems when $p \geq 2.$     
\end{abstract}
\section{Introduction}
In the present article we are concerned with the regularity estimates for the inhomogenous quasilinear system 
\begin{align}\label{quasilinear maxwell tangential intro}
\delta ( a(x) \lvert du \rvert^{p-2} du) )  = f   &&\text{ in } \Omega,  
\end{align}
where $a$ is a uniformly positive, bounded, continuous function and $u, f$ are $\mathbb{R}^{N}$-valued $k$-differential forms defined on an open, bounded, smooth subset $\Omega \subset \mathbb{R}^{n},$ with $n \geq 2,$ 
$N \geq 1,$  $0 \leq k \leq n-1$ and $1 <p < \infty.$ 

\paragraph*{} If $k=0$,  this is the well-known inhomogenous $p$-Laplacian ( with coefficients ) equation or system, for $N=1$ or $N > 1,$ respectively. Both has been studied quite extensively and can be justifiably called the prototypical operators for the study of quasilinear elliptic equations and systems respectively. For the homogenous case, the pioneering work in this field  is Uhlenbeck \cite{uhlenbecknonlinearelliptic}, who considered the very general setting of elliptic complexes. In another fundamental work Hamburger \cite{hamburgerregularity} considered the homogenous problem in precisely the present setting. However, apart from Beck-Stroffolini \cite{Beck_Stroffolini_RegularityDifferentialforms} who considered partial regularity questions for more general quasilinear systems for forms, the inhomogeneous problem for the cases $1 \leq k \leq n-1$ has received surprisingly little attention. We also remark that this level of generality is not a futile exercise, as this implies new results even in the simple but important case of vector fields in three dimensions.      

\paragraph*{Lack of ellipticity}\  The cases $1 \leq k \leq n-1$ has their own unique features. To boot, in striking contrast to the cases of the $p$-Laplcian equation and system, for $1 \leq k \leq n-1,$ the system \eqref{quasilinear maxwell tangential intro} is \emph{not} elliptic. There is no uniqueness of solutions. Indeed, adding any closed form to a solution yields another solution and thus the system has an infinite dimensional kernel. However, since adding a closed form to $u$ obviously has no impact on $du,$ one can certainly hope to prove regularity properties for the exterior derivative $du.$ 

\paragraph*{Gauge freedom and gauge fixing}\ This lack of ellipticity is due to the so-called `gauge freedom' of the problem and can be circumvented in some cases by choosing to `fix the gauge'. More precisely, we consider the system 
\begin{align}\label{gauge fixed system}
\delta \left( a(x) \lvert d u \rvert^{p-2} d u \right) = f \qquad \text{ and } \qquad \delta u = 0 \qquad \qquad \text{ in } \Omega.
\end{align}
The condition $\delta u = 0$ is called the Coulomb gauge condition and this makes the system at least formally elliptic. The crucial point is that if we are interested only in the regularity properties of $du,$ we can always assume the Coulomb condition. Indeed, given any $W^{1,p}$ local solution of \eqref{quasilinear maxwell tangential intro}, we can always find a solution of \eqref{gauge fixed system} which has the same exterior derivative a.e. and thus the lack of ellipticity is in some sense, superficial.

\paragraph*{Stein theorem} \ Stein \cite{Stein_steintheorem} proved the borderline Sobolev embedding result which states that for $n \geq 2,$ $u \in L^{1}(\mathbb{R}^{n})$ and $\nabla u \in L^{(n,1)}(\mathbb{R}^{n}; \mathbb{R}^{n})$ implies $u$ is continuous. Coupled with standard Calderon-Zygmund estimates, which extend to Lorentz spaces, this implies $u \in C^{1}(\mathbb{R}^{n})$ if $\Delta u \in L^{(n,1)}(\mathbb{R}^{n}).$ The search for a nonlinear generalization of this result culminated in 
Kuusi-Mingione \cite{KuusiMingione_nonlinearStein}, where the authors proved the following general \emph{quasilinear} \emph{vectorial} version
\begin{align}\label{stein pde version p laplacian}
\operatorname*{div}  \left( a(x)\lvert \nabla u \rvert^{p-2} \nabla u \right) \in L^{(n,1)}(\mathbb{R}^{n}; \mathbb{R}^{N}) \qquad \Rightarrow  \qquad u \in C^{1}(\mathbb{R}^{n};\mathbb{R}^{N}), 
\end{align}
where $a$ is a uniformly positive Dini continuous function and  $1 < p < \infty .$ 

\paragraph*{Stein theorem for forms}\ The main point of the present article is that by working with a different space and different boundary value problems for comparison, one can adapt the techniques in Kuusi-Mingione \cite{KuusiMingione_nonlinearStein} to our setting.
The main technical obstacle to adapt their argument in our case is that since the nonlinear information is concerned only with the exterior derivative, the natural space is not $W^{1,p}$ and we do not have Sobolev-Poincar\'{e} inequalities. So we need to use different boundary value problems to obtain the comparison estimates and use a `gauge fixing' to cater for the lack of ellipticity. The gauge fixing also allows us to transfer the some regularity information $du$ to $\nabla u$. 
Our main result in the present article is 
\begin{theorem}[Nonlinear Stein theorem]\label{main theorem}
	Let $n \geq 2 ,$ $N \geq 1$ and $0 \leq k \leq n-1$ be integers and let $\Omega \subset \mathbb{R}^{n}$ be open. Suppose that 
	\begin{itemize}
		\item[(i)] $f: \Omega \rightarrow \Lambda^{k}\mathbb{R}^{n}\otimes \mathbb{R}^{N}$ is $L^{(n,1)}$ locally in $\Omega$ and $\delta f = 0$ in $\Omega$ in the sense of distributions, 
		\item[(ii)] $a:\Omega \rightarrow [\gamma, L]$, where $ 0 < \gamma < L < \infty,$ is Dini continuous.
	\end{itemize}
	Let $1 < p < \infty$ and $u \in 
	W_{loc}^{1,p} \left(\Omega; \Lambda^{k}\mathbb{R}^{n}\otimes \mathbb{R}^{N} \right)$ be a local weak solution to the system 
	\begin{align}\label{quasilinear maxwell tangential}
	\delta ( a(x) \lvert du \rvert^{p-2} du) )  = f   &&\text{ in } \Omega. 
	\end{align}
	Then $du $ is continuous in $\Omega.$ Moreover, if in addition $\delta u = 0 $ in $\Omega,$ then $\nabla u$ is locally VMO in $\Omega.$
\end{theorem}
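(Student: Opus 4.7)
The plan is to adapt the Riesz-potential and excess-decay machinery of Kuusi--Mingione \cite{KuusiMingione_nonlinearStein} to the Hodge-system setting, using the Coulomb gauge to compensate for the lack of ellipticity of \eqref{quasilinear maxwell tangential}. By the gauge-fixing discussion in the introduction, I may assume $\delta u=0$ throughout, which reduces matters to the elliptic system \eqref{gauge fixed system}; the conclusions about $du$ are unaffected by this reduction.

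\textbf{Comparison on balls.} Fix $B_{2R}(x_{0})\Subset\Omega$. On $B_{R}$ I freeze the coefficient at $x_{0}$ and solve the homogeneous Hodge-type boundary value problem
\begin{equation*}
\delta\bigl(a(x_{0})\lvert dv\rvert^{p-2}dv\bigr)=0,\quad \delta v=0\ \text{ in }B_{R},\quad \nu\wedge v=\nu\wedge u\ \text{ on }\partial B_{R},
\end{equation*}
which is the natural BVP in place of the Dirichlet problem used for the scalar/vector $p$-Laplacian. Monotonicity yields a comparison estimate of the form
\begin{equation*}
\int_{B_{R}}\lvert V(du)-V(dv)\rvert^{2}\lesssim\int_{B_{R}}\bigl(\lvert f\rvert+\omega_{a}(R)\lvert du\rvert^{p-1}\bigr)\lvert u-v\rvert,
\end{equation*}
with $V(\xi)=\lvert\xi\rvert^{(p-2)/2}\xi$ and $\omega_{a}$ the Dini modulus of $a$. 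Because $u-v$ has vanishing tangential trace and $\delta(u-v)=0$, a Gaffney/Sobolev--Poincar\'e inequality on the ball controls $\lVert u-v\rVert_{L^{p^{*}}}$ by $\lVert d(u-v)\rVert_{L^{p}}$, absorbing the right-hand side into the excess of $du$ and a Riesz-potential contribution of $f$.

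\textbf{Excess decay and potential bounds.} The frozen, gauge-fixed problem for $v$ is the standard $p$-Hodge system, whose solutions enjoy the H\"older decay $\operatorname{osc}_{B_{\rho}}dv\lesssim(\rho/R)^{\alpha}\operatorname{osc}_{B_{R}}dv$ by Uhlenbeck \cite{uhlenbecknonlinearelliptic} and Hamburger \cite{hamburgerregularity}. Combined with the comparison estimate this gives a decay inequality
\begin{equation*}
E(du,B_{\rho})\leq c\,(\rho/R)^{\alpha}\,E(du,B_{R})+\operatorname{err}(R),
\end{equation*}
where the error is controlled by $\omega_{a}(R)$ plus the truncated Riesz potential $\bigl[\mathbf{I}_{1}^{\lvert f\rvert}(x_{0},R)\bigr]^{1/(p-1)}$. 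Iterating across dyadic scales, summing the telescoping series, and using the Dini hypothesis $\int_{0}\omega_{a}(s)\,ds/s<\infty$ produces a pointwise oscillation bound for $du$ in terms of these two quantities.

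\textbf{Continuity and VMO.} The Dini integral is continuous in its upper limit by assumption, and the borderline hypothesis $f\in L^{(n,1)}_{loc}$ is exactly the condition that makes $\mathbf{I}_{1}^{\lvert f\rvert}(\cdot,\rho)$ locally uniformly small and continuous in $(x_{0},\rho)$; this is the Stein ingredient. Hence the dyadic excess sums vanish uniformly as $\rho\downarrow 0$, so $du$ is the uniform local limit of its ball-averages and therefore continuous. When $\delta u=0$ is additionally imposed, the first-order Hodge system $(du,\delta u)=(\text{continuous},0)$ together with Calder\'on--Zygmund estimates for $d+\delta$ upgrades $du\in C^{0}$ to $\nabla u\in\mathrm{VMO}_{loc}$; one cannot expect $\nabla u\in C^{0}$ in general since this passage is a zero-order singular integral. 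The main obstacle will be the comparison step: the BVP for $v$ must simultaneously respect the gauge and admit a Poincar\'e-type control of $u-v$, so the choice of the tangential trace condition together with $\delta v=0$ (rather than a full Dirichlet condition) and the corresponding Gaffney estimate on the ball is the technical heart of the argument.
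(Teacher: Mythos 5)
Your outline follows the paper's strategy in all essentials: gauge fixing to reduce to $\delta u=0$; comparison with homogeneous Hodge-type boundary value problems posed in $W^{1,p}_{\delta,T}$ (tangential trace together with coclosedness), chosen precisely so that the Poincar\'{e}--Sobolev/Gaffney inequality on balls replaces the missing Sobolev--Poincar\'{e} inequality of the Dirichlet setting; excess decay driven by the Uhlenbeck--Hamburger regularity of the frozen system; smallness of the error via the Dini modulus and the $L^{(n,1)}$ Riesz-potential; and finally the linear $d$--$\delta$ Campanato estimates to pass from ``$du$ continuous and $\delta u=0$'' to $\nabla u\in\mathrm{VMO}_{loc}$. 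You also correctly identify the choice of boundary value problem as the technical heart.

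The one place where your sketch, taken literally, would not go through is the single-step comparison $u\mapsto v$ with the coefficient frozen at $x_{0}$. The paper (following Kuusi--Mingione) interposes a second map: $w_{j}$ solves the \emph{homogeneous variable-coefficient} system on $B_{j}$ with the same gauge and tangential data, and only then does $v_{j}$ solve the frozen system on $\tfrac{1}{2}B_{j}$ with data $w_{j}$. This is not cosmetic. For $p>2$ the passage from the $V$-excess $\int\lvert V(\cdot)-V(\cdot)\rvert^{2}$ to the linear excess of $du$ (which is what must decay for $du$ itself, rather than $V(du)$, to be handled at degenerate scales) requires the two-sided nondegeneracy bound $\lambda/A\leq\lvert dw_{j-1}\rvert\leq A\lambda$ of \eqref{both side bounds on w}; such a pointwise condition can only be imposed on a comparison function already known to be continuous with interior sup bounds, not on $du$, which at that stage is merely $L^{p}$. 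Supplying those bounds for $w_{j}$ --- continuity and the sup estimate for the homogeneous system with merely Dini-continuous coefficients --- is exactly the content of Theorem \ref{dwcontinuityhomogeneousDini}, which the paper proves directly (Kuusi--Mingione imported the analogous fact from their parabolic work, so it cannot simply be cited here). A small further imprecision: coefficient freezing tests against $d(u-v)$, so it produces an error $\omega_{a}(R)\int\lvert du\rvert^{p-1}\lvert du-dv\rvert$, not $\omega_{a}(R)\int\lvert du\rvert^{p-1}\lvert u-v\rvert$; harmless, but symptomatic of the compression. With the two-step comparison and the intermediate homogeneous-Dini theorem restored, the rest of your argument matches the paper's.
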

Note that the case $k=0$ is somewhat special where the theorem reduces to Theorem 1 of \cite{KuusiMingione_nonlinearStein} and concludes that $\nabla u$ is continuous, as in that case $du$ and $\nabla u$ is the same. 
\paragraph*{H\"{o}lder and BMO estimates for the gradient for $p \geq 2$ case}\ For $ p \geq 2,$ we also derive estimates in Campanato spaces for the gradients, which implies the following. 
\begin{theorem}\label{holder theorem}
	Let $n \geq 2 ,$ $N \geq 1$ and $0 \leq k \leq n-1$ be integers and let $2 \leq p < \infty$ and $0 < \alpha < 1$ be real numbers. Let $\Omega \subset \mathbb{R}^{n}$ be open and assume that $a:\Omega \rightarrow [\gamma, L]$, where $ 0 < \gamma < L < \infty,$ is $C^{0,\alpha}\left( \Omega \right)$. 
	Let $u \in 
	W_{loc}^{1,p} \left(\Omega; \Lambda^{k}\mathbb{R}^{n}\otimes \mathbb{R}^{N} \right)$ be a local weak solution to the system 
	\begin{align}\label{main equation holder}
	\delta ( a(x) \lvert du \rvert^{p-2} du) )  = f   &&\text{ in } \Omega. 
	\end{align}
	Then then following holds true. 
	\begin{itemize}
		\item[(i)] If $f \in  L_{loc}^{q} \left( \Omega; \Lambda^{k}\mathbb{R}^{n}\otimes \mathbb{R}^{N} \right) $ for some $q > n$ and $\delta f = 0$ in $\Omega$ in the sense of distributions, then 
		$u \in 
		C_{loc}^{1,\theta} \left(\Omega; \Lambda^{k}\mathbb{R}^{n}\otimes \mathbb{R}^{N} \right)$ for some $0 < \theta < 1,$ modulo the addition of a closed form. 
		\item[(ii)] If $f \in  L_{loc}^{n} \left( \Omega; \Lambda^{k}\mathbb{R}^{n}\otimes \mathbb{R}^{N} \right) $ and $\delta f = 0$ in $\Omega$ in the sense of distributions, then 
		$\nabla u $ is locally VMO in $\Omega$, modulo the addition of a closed form. 
		\item[(iii)] If $f \in  L_{loc}^{(n,\infty)} \left( \Omega; \Lambda^{k}\mathbb{R}^{n}\otimes \mathbb{R}^{N} \right) $ and $\delta f = 0$ in $\Omega$ in the sense of distributions, then 
		$\nabla u$ is locally BMO in $\Omega$, modulo the addition of a closed form.
	\end{itemize}
	\end{theorem}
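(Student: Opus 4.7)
The plan is to run a standard Campanato-type perturbation scheme against a frozen-coefficient homogeneous reference problem, and then read off the three conclusions from the three different rates at which the forcing term vanishes. Since the conclusions concern $\nabla u$ modulo a closed form, the first step is to gauge-fix: on each ball $B_R \Subset \Omega$, replace $u$ by $\tilde u = u + d\varphi$ with $\delta \tilde u = 0$, which leaves the equation unchanged because $d\tilde u = du$. A Gaffney-type inequality then gives $\|\nabla \tilde u\|_{L^p(B_r)} \leq C\bigl(\|d\tilde u\|_{L^p(B_R)} + \|\tilde u\|_{L^p(B_R)}\bigr)$ for $r<R$, so every H\"older, VMO, or BMO oscillation bound on $du$ transports to $\nabla \tilde u$ via standard linear elliptic theory for the Hodge system.

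Next, on a small ball $B_r(x_0)$ I would freeze the coefficient and solve the nonlinear reference problem
\[
\delta\bigl(a(x_0)|dv|^{p-2}dv\bigr)=0,\quad \delta v=0 \ \text{in } B_r, \qquad \nu\wedge v=\nu\wedge \tilde u \ \text{on }\partial B_r.
\]
For $p\geq 2$ the monotonicity of $\xi \mapsto |\xi|^{p-2}\xi$ together with the uniform positivity of $a$ allow me to test with $\tilde u - v$ and obtain
\[
\int_{B_r}|du-dv|^p \;\leq\; C\,\omega_a(r)^{p'}\!\int_{B_r}\!|du|^p \;+\; C\!\int_{B_r}\!|f|\,|\tilde u-v|,
\]
where $\omega_a$ is the modulus of continuity of $a$; the last term is then controlled by H\"older's inequality and a Sobolev--Poincar\'e estimate on $\tilde u - v$ in terms of the relevant norm of $f$ on $B_r$. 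The regularity theory of Uhlenbeck \cite{uhlenbecknonlinearelliptic} and Hamburger \cite{hamburgerregularity} applies to $v$ and yields the excess decay
\[
\int_{B_\rho}|dv-(dv)_{B_\rho}|^p \;\leq\; C(\rho/r)^{n+p\alpha_0}\!\int_{B_r}\!|dv-(dv)_{B_r}|^p
\]
for some $\alpha_0>0$ depending only on $n,N,p$.

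Setting $\Phi(\rho):=\int_{B_\rho}|du-(du)_{B_\rho}|^p$ and combining the previous two displays produces the hybrid inequality
\[
\Phi(\rho) \;\leq\; C(\rho/r)^{n+p\alpha_0}\Phi(r) \;+\; E(r),
\]
where $E(r)$ is a sum of $\omega_a(r)^{p'}\!\int_{B_r}|du|^p$ and a term involving the ambient norm of $f$. A standard iteration lemma of Campanato type then yields all three conclusions simultaneously: for $f\in L^q$ with $q>n$, $E(r)=O(r^{n+p\theta'})$, giving $du$ in the Campanato space $\mathcal{L}^{p,n+p\theta}$ and hence $u\in C^{1,\theta}_{loc}$ modulo a closed form; for $f\in L^n$, absolute continuity of the integral gives $E(r)\to 0$, yielding $du$, and then $\nabla \tilde u$, in VMO; for $f\in L^{(n,\infty)}$, a Lorentz--H\"older bound keeps $E(r)$ merely bounded, yielding BMO. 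The first step then lifts each conclusion to $\nabla u$.

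The principal obstacle I foresee is the functional setup of the comparison problem. Because the natural domain of the operator is \emph{not} $W^{1,p}$ but rather co-closed forms with prescribed tangential boundary component, existence, uniqueness, and $W^{1,p}$ a priori bounds for $v$, together with a Sobolev--Poincar\'e inequality for $\tilde u - v$ in that class, must be developed before the perturbation estimate can even be formulated. Once this form-valued analogue of the usual nonlinear Dirichlet theory is in place, the remaining Campanato iteration and the case-by-case bookkeeping of $E(r)$ are entirely classical.
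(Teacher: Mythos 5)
Your outline is correct and the overall architecture coincides with the paper's: gauge fixing via Lemma \ref{gauge fixing lemma}, comparison with the frozen-coefficient homogeneous problem posed in $u+W^{1,p}_{\delta,T}$, excess decay for the reference solution, a Campanato iteration, and transfer from $du$ to $\nabla u$ through the linear $d$--$\delta$ estimates \eqref{local estimate campanato}. The one genuinely different step is your treatment of the right-hand side. The paper first lifts $f$ to a potential $F$ by solving the linear Hodge system \eqref{changing into divergenceform} (so that $f=\delta F$), and the comparison estimate of Lemma \ref{firstcomparisoncampanato} then involves only the \emph{oscillation} $\int_{B_R}|F-(F)_{B_R}|^{p'}$; the three conclusions follow from the embeddings $L^q\to W^{1,q}\hookrightarrow C^{0,(q-n)/q}$, $L^n\to W^{1,n}\hookrightarrow \mathrm{VMO}$, $L^{(n,\infty)}\to W^{1,(n,\infty)}\hookrightarrow \mathrm{BMO}$ applied to $F$, fed into a single Campanato theorem (Theorem \ref{campanato estimates}) parametrized by $\lambda$. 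You instead keep $f$ and bound $\int_{B_r}|f|\,|\tilde u-v|$ by H\"{o}lder, the Poincar\'{e}--Sobolev inequality of Proposition \ref{poincaresobolev}, and Young's inequality, exactly as the paper does in its Lemma \ref{firstcomparison} for the Lorentz-space Stein theorem; the three rates $O(r^{n+p\theta'})$, $o(r^n)$, $O(r^n)$ then come out of $r\bigl(\fint_{B_r}|f|^{\tilde q}\bigr)^{1/\tilde q}$ for a subcritical $\tilde q$. Both routes work; the paper's buys a uniform statement in terms of Campanato norms of $F$ (and is what makes the $p'$-oscillation of $F$, rather than its size, the relevant quantity in the borderline BMO case), while yours avoids solving an auxiliary linear boundary value problem. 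Your working with $du$ in $L^p$ rather than $V(du)$ in $L^2$ is immaterial for $p\ge 2$ by \eqref{constant cv}.

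One point you must make explicit: the coefficient contribution $\omega_a(r)^{p'}\int_{B_r}|du|^p\sim r^{\alpha p'}\int_{B_r}|du|^p$ in $E(r)$ is, at the first pass, only $O(r^{\alpha p'})$, far below the threshold $r^{n}$ needed by the iteration lemma, so the "standard iteration" cannot be applied in one shot. You first have to run the iteration on $\int_{B_\rho}|du|^p$ itself (without mean subtraction, using the sup estimate \eqref{supestimatefrozenhomogeneous} for the reference solution and energy minimality) to get the Morrey bound $\int_{B_r}|du|^p\lesssim r^{\mu}$ for every $\mu<n$, and only then re-run the iteration on the mean oscillation with $\mu>n-\alpha p'$; the paper carries out exactly this bootstrap in Steps 1a--1c of the proof of Theorem \ref{campanato estimates}. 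With that inserted, your argument goes through.
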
 
These results generalize the result of DiBenedetto-Manfredi \cite{DiBenedettoManfredi} to the case of differential forms. 

Restricted to the case $k=1, N=1$ and $n=3,$ theorem \ref{main theorem} and theorem \ref{holder theorem} reduce to the following corollaries, which we mention separately due to its importance in connection to nonlinear Maxwell operators and quasilinear Stokes-type problems. 
\begin{corollary}
	Let  $\Omega \subset \mathbb{R}^{3}$ be open and let $1 <  p < \infty.$ Suppose $a:\Omega \rightarrow [\gamma, L]$ is Dini continuous, where $ 0 < \gamma < L < \infty .$ Let  
	$f \in L^{(3,1)}_{loc}(\Omega, \mathbb{R}^{3})$ with $\operatorname*{div} f = 0$ in $\Omega$ in the sense of distributions. If 
	$u \in W_{loc}^{1,p} \left(\Omega; \mathbb{R}^{3} \right)$ be a local weak solution to the system 
	\begin{align}\label{quasilinear curl curl}
	\operatorname*{curl} ( a(x) \lvert \operatorname*{curl} u \rvert^{p-2} \operatorname*{curl} u )  = f   &&\text{ in } \Omega. \end{align}
	Then $\operatorname*{curl} u$ is continuous in $ \Omega.$ Moreover, if in addition, $u$ is divergence-free, then $\nabla u$ is VMO locally in $\Omega.$
\end{corollary}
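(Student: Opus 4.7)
The plan is to obtain the corollary as the direct specialization of Theorem \ref{main theorem} to dimension $n=3$ with $k=1$ and $N=1$, once one rewrites vector calculus in the language of differential forms. The first step is to set up the dictionary: writing $u = u_{1}\,dx^{1} + u_{2}\,dx^{2} + u_{3}\,dx^{3}$ as a $1$-form, the $2$-form $du$ is identified via the Hodge star with the vector field $\operatorname*{curl} u$, so that $|du|=|\operatorname*{curl} u|$ pointwise and the nonlinear quantity $a(x)|du|^{p-2}du$ corresponds to the vector field $a(x)|\operatorname*{curl} u|^{p-2}\operatorname*{curl} u$.

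The second step is to check that, under this identification, the codifferential $\delta$ applied to the $2$-form $a(x)|du|^{p-2}du$ produces, up to a fixed sign convention, precisely the $1$-form whose vector representative is $\operatorname*{curl}\!\left(a(x)|\operatorname*{curl} u|^{p-2}\operatorname*{curl} u\right)$. Likewise, for the $1$-form $f$, the distributional condition $\delta f = 0$ is equivalent to $\operatorname*{div} f=0$, while $W^{1,p}_{loc}$ and $L^{(3,1)}_{loc}$ regularity of vector fields coincides with that of the associated $1$-forms. Thus system \eqref{quasilinear curl curl} is precisely the specialization of the form system appearing in Theorem \ref{main theorem}.

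Finally, I would conclude by direct appeal to Theorem \ref{main theorem}: its conclusion that $du$ is continuous translates into continuity of $\operatorname*{curl} u$, and the divergence-free hypothesis on $u$ becomes the Coulomb gauge $\delta u=0$, from which the additional local VMO regularity of $\nabla u$ transfers. The only delicate point in the argument is essentially bookkeeping: one must make sure that the signs in the Hodge star and codifferential conventions line up correctly, so that the two occurrences of $\operatorname*{curl}$ in \eqref{quasilinear curl curl} correspond, respectively, to $d$ on $1$-forms and to $\delta$ on $2$-forms as they appear in Theorem \ref{main theorem}.
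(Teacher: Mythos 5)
Your proposal is correct and coincides with what the paper does: the corollary is stated there precisely as the specialization of Theorem \ref{main theorem} to $n=3$, $k=1$, $N=1$ under the standard Hodge-star dictionary identifying $du$ with $\operatorname{curl}u$, $\delta$ on $2$-forms with $\operatorname{curl}$, $\delta f=0$ with $\operatorname{div}f=0$, and $\delta u=0$ with $\operatorname{div}u=0$. No further argument is needed.
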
 
\begin{corollary}
	Let  $\Omega \subset \mathbb{R}^{3}$ be open and let $2 \leq  p < \infty.$ Suppose $a:\Omega \rightarrow [\gamma, L]$ is $C^{0,\alpha}\left( \Omega \right)$, where $ 0 < \gamma < L < \infty $ and $ 0 < \alpha < 1.$ Let  
	$u \in W_{loc}^{1,p} \left(\Omega; \mathbb{R}^{3} \right)$ be a local weak solution to the system 
	\begin{align}\label{quasilinear curl curl holder}
	\operatorname*{curl} ( a(x) \lvert \operatorname*{curl} u \rvert^{p-2} \operatorname*{curl} u )  = f   &&\text{ in } \Omega. \end{align} 
	Then then following holds true. 
	\begin{itemize}
		\item[(i)] If $f \in  L_{loc}^{q} \left( \Omega; \mathbb{R}^{3} \right) $ for some $q > 3$ and $\operatorname{div} f = 0$ in $\Omega$ in the sense of distributions, then 
		$u \in 
		C_{loc}^{1,\theta} \left(\Omega; \mathbb{R}^{3} \right)$ for some $0 < \theta < 1,$ modulo the addition of a $\operatorname{curl}$-free field. 
		\item[(ii)] If $f \in  L_{loc}^{3} \left( \Omega; \mathbb{R}^{3}\right) $ and $\operatorname{div} f = 0$ in $\Omega$ in the sense of distributions, then 
		$\nabla u $ is locally VMO in $\Omega$, modulo the addition of a $\operatorname{curl}$-free field. 
		\item[(iii)] If $f \in  L_{loc}^{(3,\infty)} \left( \Omega; \mathbb{R}^{3} \right) $ and $\operatorname{div} f = 0$ in $\Omega$ in the sense of distributions, then 
		$\nabla u$ is locally BMO in $\Omega$, modulo the addition of a $\operatorname{curl}$-free field.
	\end{itemize}
	\end{corollary}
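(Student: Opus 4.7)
The plan is to prove this corollary as a direct specialization of Theorem \ref{holder theorem}, since the curl-curl system in $\mathbb{R}^{3}$ is just the differential-form system with $N=1$, $k=1$, $n=3$ rewritten under the standard vector-calculus identifications. Accordingly, the argument is a dictionary translation rather than a new PDE proof.

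First I would set up the Hodge-theoretic dictionary for $n=3$. A $1$-form $\omega = \sum_{i=1}^{3} u_{i}\, dx^{i}$ is identified with the vector field $u=(u_{1},u_{2},u_{3})$, and under this identification $du$ (a $2$-form) corresponds, via Hodge duality, to $\operatorname{curl} u$ viewed as a vector field. Consequently $\lvert du \rvert = \lvert \operatorname{curl} u \rvert$ pointwise, and the nonlinear $2$-form $a(x)\lvert du \rvert^{p-2}du$ is dual to the vector field $a(x)\lvert \operatorname{curl} u \rvert^{p-2}\operatorname{curl} u$. The codifferential $\delta$ on $2$-forms then corresponds to $\operatorname{curl}$ on the dual vector field, and $\delta$ on $1$-forms corresponds (up to sign) to $-\operatorname{div}$. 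Thus \eqref{quasilinear maxwell tangential} becomes \eqref{quasilinear curl curl holder}, the condition $\delta f = 0$ on the $1$-form $f$ becomes $\operatorname{div} f = 0$, and closed $1$-forms (i.e.\ forms $\eta$ with $d\eta = 0$) correspond precisely to curl-free vector fields.

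Second, I would verify that the hypotheses on $a$ and $f$ match: in both statements $a$ takes values in $[\gamma,L]$ and is $C^{0,\alpha}$, and $f$ lies in the same local Lebesgue/Lorentz spaces ($L^{q}$ for $q>n=3$ in (i), $L^{n}=L^{3}$ in (ii), $L^{(n,\infty)}=L^{(3,\infty)}$ in (iii)) with the divergence-free condition being exactly $\delta f = 0$ under the dictionary. Since $u \in W^{1,p}_{loc}(\Omega;\mathbb{R}^{3})$ is identical to $u \in W^{1,p}_{loc}(\Omega;\Lambda^{1}\mathbb{R}^{3}\otimes \mathbb{R})$, the vector-field solution of \eqref{quasilinear curl curl holder} is a local weak solution in the sense of Theorem \ref{holder theorem}.

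Finally, I would invoke Theorem \ref{holder theorem} in each of the three cases. Part (i) yields $u \in C^{1,\theta}_{loc}$ modulo a closed form, which under the dictionary is exactly modulo a curl-free field; parts (ii) and (iii) give the VMO and BMO conclusions for $\nabla u$ in the same sense. No real obstacle arises: the only thing to keep track of is the consistency of sign conventions in the Hodge star identification (so that the nonlinear operator $\delta(a\lvert du\rvert^{p-2}du)$ is mapped to $\operatorname{curl}(a\lvert \operatorname{curl} u\rvert^{p-2}\operatorname{curl} u)$ without extraneous factors), and this is standard. Therefore the corollary follows at once from Theorem \ref{holder theorem}.
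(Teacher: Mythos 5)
Your proposal is correct and is exactly the paper's (implicit) argument: the paper states this corollary as the restriction of Theorem \ref{holder theorem} to $n=3$, $k=1$, $N=1$ under the standard identification of $1$-forms with vector fields, $du$ with $\operatorname{curl}u$, $\delta$ with $\operatorname{curl}$ (on $2$-forms) and $\pm\operatorname{div}$ (on $1$-forms), and closed forms with curl-free fields. Nothing further is needed.
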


\paragraph{}\ The present article however leaves open the question whether regularity information can be transferred to the gradient also in the case of `borderline' spaces perfectly, i.e whether $u$ is locally Lipschitz or $C^{1}$. 
\begin{openquestion}\label{stein grad borderline} 
	For $u$ coclosed, if  
	\begin{align*}
	&\delta \left( a(x)\lvert d u \rvert^{p-2} d u \right) \in L_{loc}^{(n,1)}  \quad &&\Rightarrow ? \quad  u \in C_{loc}^{1}? \text{  } u \in C_{loc}^{0,1} ? 
	\end{align*}
\end{openquestion}

\paragraph*{} We conclude this introduction with a few words about the techniques and proofs. The main skeleton of the the linearization argument required to handle Dini continuous coefficients were first discovered in Kuusi-Mingione \cite{KuusiMingione_nonlinearStein}. The novelty here is use of gauge fixing procedures and employing the boundary value problems like \eqref{quasilinear maxwell tangential homogeneous general} and \eqref{quasilinear maxwell tangential homogeneous frozen general} instead of the usual Dirichlet problems to be able to use a Poincar\'{e}-Sobolev inequality, which in turns allows us to prove all the comparison estimates we need. Once this is achieved, the arguments in \cite{KuusiMingione_nonlinearStein} goes through virtually without change. So we shall primarily focus on proving the necessary ingredients and indicating \emph{only the  necessary changes} to their arguments and skip the details where it is just a matter of rewriting their arguments with obvious notational changes.    

\section{Preliminary material and notations}
\subsection{Notations}
We now fix the notations, for further details we refer to
\cite{CsatoDacKneuss} and \cite{hamburgerregularity}. Let $n \geq 2,$ $N \geq 1$ and $0 \leq k \leq n$ be integers. \begin{itemize}
	\item We write $\Lambda^{k}\left(  \mathbb{R}^{n}; \mathbb{R}^{N}\right)  $ (or simply
	$\Lambda^{k}$ if $N=1$) to denote the vector space of all alternating $k-$linear maps
	$f:\underbrace{\mathbb{R}^{n}\times\cdots\times\mathbb{R}^{n}}_{k-\text{times}%
	}\rightarrow\mathbb{R}^{N}.$ For $k=0,$ we set $\Lambda^{0}\left(  \mathbb{R}%
	^{n}; \mathbb{R}^{N}\right)  =\mathbb{R}^{N}.$ Note that $\Lambda^{k}\left(  \mathbb{R}%
	^{n}; \mathbb{R}^{N}\right)  =\{0\}$ for $k>n$ and, for $k\leq n,$ $\operatorname{dim}\left(
	\Lambda^{k}\left(  \mathbb{R}^{n}; \mathbb{R}^{N}\right)  \right)  ={\binom{{n}}{{k}}}\times N.$

	\item If $N=1,$ the symbols $\wedge,$ $\lrcorner\,,$ $\left\langle \ ;\ \right\rangle $ and,
	respectively, $\ast$ denote as usual the exterior product, the interior product, the
	scalar product and, respectively, the Hodge star operator. We extend these operations to vector-valued forms in the following way. For a scalar form $\eta \in \Lambda^{k}\left(  \mathbb{R}^{n}\right)  $ and a vector-valued form 
	$\xi \in \Lambda^{k}\left(  \mathbb{R}^{n}; \mathbb{R}^{N}\right) ,$ we define their exterior product and interior product componentwise, i.e. as
	$$ \eta \wedge \xi = \left(\eta \wedge \xi_{1}, \ldots, \eta \wedge \xi_{N} \right) \quad \text{ and }  \quad 
	\eta \lrcorner \xi = \left(\eta \lrcorner \xi_{1}, \ldots, \eta \lrcorner \xi_{N} \right). $$
	The scalar product extends to a scalar product between two vector-valued forms $\xi, \zeta \in \Lambda^{k}\left(  \mathbb{R}^{n}; \mathbb{R}^{N}\right) ,$ which is defined as 
	$$ \left\langle \xi ; \zeta \right\rangle = \sum_{i=1}^{N} \left\langle \xi_{i} ; \zeta_{i} \right\rangle. $$

\end{itemize}

\noindent Let $N \geqslant 1$, $0\leqslant k\leqslant n$ and let $\Omega\subset\mathbb{R}^n$ be open, bounded and smooth. 
\begin{itemize}
	\item An $\mathbb{R}^{N}$-valued differential $k$-form $u$ is a
	measurable function $u:\Omega\rightarrow\Lambda^{k} (\mathbb{R}^{n} ; \mathbb{R}^{N} ).$
	\item Two particular differential operators on differential forms will have a special significance for us. A differential
	$(k+1)$-form $\varphi\in L^{1}_{\rm loc}(\Omega;\Lambda^{k+1})$
	is called the exterior derivative of $\omega\in
	L^{1}_{\rm loc}\left(\Omega;\Lambda^{k}\right),$ denoted by $d\omega$,  if
	$$
	\int_{\Omega} \eta\wedge\varphi=(-1)^{n-k}\int_{\Omega} d\eta\wedge\omega,
	$$
	for all $\eta\in C^{\infty}_{0}\left(\Omega;\Lambda^{n-k-1}\right).$  The Hodge codifferential of $\omega\in L^{1}_{\rm loc}\left(\Omega;\Lambda^{k}\right)$ is
	a $(k-1)$-form, denoted $\delta\omega\in L^{1}_{\rm loc}\left(\Omega;\Lambda^{k-1}\right)$
	defined as
	$$
	\delta\omega:=(-1)^{nk+1}*d*\omega.
	$$ Of course, we set $d\omega \equiv 0$ when $k=n$ and $\delta \omega \equiv 0$ when $k=0.$ See \cite{CsatoDacKneuss} for the 
	properties and the integration by parts formula regarding these operators. We extend these definitions componentwise to the case of $\mathbb{R}^{N}$-valued forms. More precisely, 
	for any $\mathbb{R}^{N}$-valued $k$-form $\omega,$ the exterior derivative and the codifferential is defined as 
	$$ d\omega = \left( d\omega_{1}, \ldots, d\omega_{N} \right) \qquad \text{ and } \qquad  \delta\omega = \left( \delta\omega_{1}, \ldots, \delta\omega_{N} \right) . $$ The corresponding integration by parts 
	formulas extends componentwise as well.
	
	\item The usual Lebesgue, Sobolev and H\"{o}lder spaces and their local versions are defined componentwise in the usual way and are denoted by their usual symbols. The Morrey spaces, the Campanato spaces and the Lorentz spaces are defined in section \ref{function spaces}. 
	\item Let $ 1 \leq p \leq \infty$ and let $\nu$ be the outward unit normal to $\partial\Omega,$ identified with the $1$-form 
	$\displaystyle \nu = \sum_{i=1}^{n} \nu_{i} dx^{i} .$ For any $\mathbb{R}^{N}$-valued differential $k$-form $\omega$ on $\Omega,$ the $\mathbb{R}^{N}$-valued $(k+1)$-form 
	$\nu\wedge\omega$ on $\partial\Omega$ is called the tangential part of $\omega,$  interpreted in the sense of traces when $\omega$ is not continuous. The spaces $W_{T}^{1,p}\left(  \Omega;\Lambda^{k}\right)  $ and
	$W_{\delta, T}^{1,p}\left(  \Omega;\Lambda^{k}\right)  $ are defined as%
	\[
	W_{T}^{1,p}\left(  \Omega;\Lambda^{k}\right)  =\left\{  \omega\in
	W^{1,p}\left(  \Omega;\Lambda^{k}\right)  :\nu\wedge\omega=0\text{ on
	}\partial\Omega\right\},
	\]%
	\[W_{\delta, T}^{1,p}(\Omega; \Lambda^{k}) = \left\lbrace \omega \in W_{T}^{1,p}(\Omega; \Lambda^{k}) : \delta\omega = 0 \text{ in }
	\Omega \right\rbrace .
	\]
	The space $\mathcal{H}_{T}\left(  \Omega;\Lambda^{k}\right)$ is defined as
	\[
	\mathcal{H}_{T}\left(  \Omega;\Lambda^{k}\right)  =\left\{  \omega\in
	W_{T}^{1,2}\left(  \Omega;\Lambda^{k}\right)  :d\omega=0\text{ and }%
	\delta\omega=0\text{ in }\Omega\right\}
	\] We recall that when $\Omega$ is a contractible set, we have 
	$\mathcal{H}_{T}\left(  \Omega;\Lambda^{k}\right) = \left\lbrace 0 \right\rbrace.  $ In particular, this is true for any ball.
\end{itemize}
\subsection{Morrey, Campanato and Lorentz spaces}\label{function spaces}
Let $n \geq 2,$ $N \geq 1$ and $0 \leqslant k \leqslant n$ be integers. For $1\leqslant p <  \infty,$ $0 < \theta \leqslant \infty$ and $\lambda \geq 0,$  $\mathrm{L}^{p,\lambda}\left(\Omega;\left( \Lambda^{k}(\mathbb{R}^{n}); \mathbb{R}^{N} \right)\right) $ stands for the Morrey space of all $u \in L^{p}\left(\Omega;\left( \Lambda^{k}(\mathbb{R}^{n}); \mathbb{R}^{N} \right)\right)$ such that 
$$ \lVert u \rVert_{\mathrm{L}^{p,\lambda}\left(\Omega;\left( \Lambda^{k}(\mathbb{R}^{n}); \mathbb{R}^{N} \right)\right)}^{p} := \sup_{\substack{ x_{0} \in \overline{\Omega},\\ \rho >0 }} 
\rho^{-\lambda} \int_{B_{\rho}(x_{0}) \cap \Omega} \lvert u \rvert^{p} < \infty, $$ endowed with the norm 
$ \lVert u \rVert_{\mathrm{L}^{p,\lambda}}$ and $\mathcal{L}^{p,\lambda}\left(\Omega;\left( \Lambda^{k}(\mathbb{R}^{n}); \mathbb{R}^{N} \right)\right) $ denotes the Campanato space of all $u \in L^{p}\left(\Omega;\left( \Lambda^{k}(\mathbb{R}^{n}); \mathbb{R}^{N} \right)\right)$ such that 
$$ [u ]_{\mathcal{L}^{p,\lambda}\left(\Omega;\left( \Lambda^{k}(\mathbb{R}^{n}); \mathbb{R}^{N} \right)\right)}^{p} := \sup_{\substack{ x_{0} \in \overline{\Omega},\\  \rho >0 }} 
\rho^{-\lambda} \int_{B_{\rho}(x_{0}) \cap \Omega} \lvert u  - (u)_{ \rho , x_{0}}\rvert^{p} < \infty, $$ endowed with the norm 
$ \lVert u \rVert_{\mathcal{L}^{p,\lambda}} := \lVert  u \rVert_{L^{p}} +  
[u ]_{\mathcal{L}^{p,\lambda}}.$ Here 
$$\displaystyle (u)_{ \rho , x_{0}} = \frac{1}{\operatorname*{meas} \left( B_{\rho}(x_{0}) \cap \Omega \right)}\int_{B_{\rho}(x_{0}) \cap \Omega} u  = \fint_{B_{\rho}(x_{0}) \cap \Omega} u .$$ We remark that we would consistently use these notations for averages and averaged integrals throughout the rest.   
For standard facts about these spaces, particularly their identification with H\"{o}lder spaces and BMO space, see \cite{giaquinta-martinazzi-regularity}. A $\mathbb{R}^{N}$-valued differential $k$-form $u: \Omega \rightarrow \left( \Lambda^{k}(\mathbb{R}^{n}); \mathbb{R}^{N} \right)$ is said to belong to the Lorentz space $L^{(p,\theta)} \left(\Omega;\left( \Lambda^{k}(\mathbb{R}^{n}); \mathbb{R}^{N} \right)\right)$ if 
$$ \left\lVert u \right\rVert_{L^{(p,\theta)}\left(\Omega;\left( \Lambda^{k}(\mathbb{R}^{n}); \mathbb{R}^{N} \right)\right)}^{\theta} 
:= \int_{0}^{\infty} \left( t^{p} \left\lvert \left\lbrace x \in \Omega : \left\lvert u \right\rvert > t \right\rbrace\right\rvert \right)^{\frac{\theta}{p}} \frac{\mathrm{d}t}{t} < \infty $$
for $0 < \theta < \infty$ and if 
$$ \left\lVert u \right\rVert_{L^{(p,\infty)}\left(\Omega;\left( \Lambda^{k}(\mathbb{R}^{n}); \mathbb{R}^{N} \right)\right)}^{p} 
:= \sup_{t >0} \left( t^{p} \left\lvert \left\lbrace x \in \Omega : \left\lvert u \right\rvert > t \right\rbrace\right\rvert \right) < \infty .$$ For different properties of Lorentz spaces, see \cite{SteinWeiss_Fourieranalysis}.  
\subsection{The auxiliary mapping $V$}
As is standard in the literature, we shall use the following auxiliary mapping $V: \mathbb{R}^{\binom{n}{k+1}\times N} \rightarrow \mathbb{R}^{\binom{n}{k+1}\times N}$ 
defined by 
\begin{equation}\label{definition V}
V(z) : = \left\lvert z \right\rvert^{\frac{p-2}{2}} z , 
\end{equation}
which is a locally Lipschitz bijection from $\mathbb{R}^{\binom{n}{k+1}\times N}$ into itself. 
We summarize the relevant properties of the map in the following. 
\begin{lemma}\label{prop of V}
	For any $p >1$ and any $0 \leq k \leq n-1,$ there exists a constant $c_{V} = c_{V}(n,N,k,p) > 0$ such that 
	\begin{equation}\label{constant cv}
	\frac{\left\lvert z_{1} - z_{2}\right\rvert}{c_{V}} \leq  \frac{\left\lvert V(z_{1}) - V(z_{2})\right\rvert}{\left( \left\lvert z_{1} \right\rvert + 
		\left\lvert z_{2}\right\rvert \right)^{\frac{p-2}{2}}} \leq c_{V}\left\lvert z_{1} - z_{2}\right\rvert,
	\end{equation}
	for any $z_{1}, z_{2} \in \mathbb{R}^{\binom{n}{k+1}\times N},$ not both zero. This implies the classical monotonicity estimate 
	\begin{equation}\label{monotonicity}
	\left( \left\lvert z_{1} \right\rvert + \left\lvert z_{2}\right\rvert \right)^{p-2} \left\lvert z_{1} - z_{2}\right\rvert^{2} \leq c(n,N,k,p) 
	\left\langle  \left\lvert z_{1} \right\rvert^{p-2} z_{1} - \left\lvert z_{2} \right\rvert^{p-2} z_{2}, z_{1} - z_{2} \right\rangle ,  
	\end{equation}
	with a constant $c(n,N,k,p) > 0$ for all $p >1$ and all $z_{1}, z_{2} \in \mathbb{R}^{\binom{n}{k+1}\times N}.$
	Moreover, if $1 < p \leq 2,$ there exists a constant $c = c(n,N,k,p) > 0$ such that for any  $z_{1}, z_{2} \in \mathbb{R}^{\binom{n}{k+1}\times N},$
	\begin{equation}\label{v estimate p less 2}
	\left\lvert z_{1} - z_{2}\right\rvert \leq c \left\lvert V(z_{1}) - V(z_{2})\right\rvert^{\frac{2}{p}} + c \left\lvert V(z_{1}) - V(z_{2})\right\rvert 
	\left\lvert z_{2}\right\rvert^{\frac{2-p}{2}}.
	\end{equation}
\end{lemma}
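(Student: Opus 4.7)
The plan is to prove the three parts of the lemma in sequence, with (1) being the engine that drives (2) and (3). The pointwise map $V$ is smooth away from the origin, with derivative
\[
DV(z) = |z|^{\frac{p-2}{2}}\left( I + \tfrac{p-2}{2}\, \hat z \otimes \hat z\right), \qquad \hat z = z/|z|,
\]
which is symmetric and positive definite with eigenvalues $|z|^{(p-2)/2}$ (on $\hat z^\perp$) and $\tfrac{p}{2}|z|^{(p-2)/2}$ (along $\hat z$). Thus in the operator sense $DV(z)\asymp |z|^{(p-2)/2}\, I$, with comparison constants depending only on $p$.

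For part (1), I would write
\[
V(z_1)-V(z_2)=A(z_1,z_2)(z_1-z_2), \qquad A(z_1,z_2):=\int_0^1 DV\bigl(z_2+t(z_1-z_2)\bigr)\,dt,
\]
so that $A$ is symmetric positive semidefinite, and use the operator bound above to reduce the whole problem to the scalar comparison
\[
\int_0^1 \bigl|z_2+t(z_1-z_2)\bigr|^{\frac{p-2}{2}}\,dt \;\asymp\;\bigl(|z_1|+|z_2|\bigr)^{\frac{p-2}{2}},
\]
with constants depending only on $p$. For $p\ge 2$ the upper bound is immediate from the triangle inequality and the lower bound follows because $|z_2+t(z_1-z_2)|\gtrsim |z_1|+|z_2|$ on a sub-interval of $[0,1]$ of fixed length (by splitting on whether $|z_1|\le |z_2|$ or not and translating appropriately). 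For $1<p<2$, $(p-2)/2\in(-1,0)$ and the integrability is preserved; here one argues by a homogeneity/scaling normalization (dividing by $\max(|z_1|,|z_2|)$ reduces to a compactness statement on the unit ball, with the singularity at $0$ being of order $s^{(p-2)/2}$, $s>-1$). The degenerate case $z_1=-z_2$ is handled by observing that $V(-z)=-V(z)$ and reducing to the trivial identity. This is the step I expect to require the most care, since the sign and size of $(p-2)/2$ drive all later arguments.

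For part (2), the monotonicity is most cleanly obtained by noting that $|z|^{p-2}z=\nabla\left(\tfrac1p|z|^p\right)$ and writing
\[
\left\langle|z_1|^{p-2}z_1-|z_2|^{p-2}z_2,\,z_1-z_2\right\rangle=\int_0^1\! \bigl\langle D^2\bigl(\tfrac1p|\cdot|^p\bigr)(z_t)(z_1-z_2),\,z_1-z_2\bigr\rangle\,dt,
\]
with $z_t=z_2+t(z_1-z_2)$. The Hessian equals $|z|^{p-2}(I+(p-2)\hat z\otimes \hat z)$ whose smallest eigenvalue is $\min(1,p-1)|z|^{p-2}$, and the same scalar comparison integral from part (1) (now with exponent $p-2$ in place of $(p-2)/2$) gives the claimed lower bound.

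For part (3) with $1<p\le 2$, I would case-split on whether $|z_1|\le 2|z_2|$ or $|z_1|>2|z_2|$. In the first case, $|z_1|+|z_2|\lesssim |z_2|$ and the left inequality of (1), rewritten as $|z_1-z_2|\le c_V |V(z_1)-V(z_2)|(|z_1|+|z_2|)^{(2-p)/2}$, yields directly the second term on the right-hand side. In the second case, $|z_2|\le |z_1|/2$, so $|z_1-z_2|\ge |z_1|/2$ and therefore $|z_1|+|z_2|\le 2|z_1|\le 4|z_1-z_2|$; plugging this into the same rewritten inequality gives $|z_1-z_2|^{p/2}\le c |V(z_1)-V(z_2)|$, i.e.\ the first term. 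Summing both bounds produces the stated inequality. The main obstacle throughout is the integral equivalence in part (1) when $(p-2)/2<0$; everything else is an algebraic consequence.
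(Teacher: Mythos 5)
Your proposal is correct. Note that the paper does not actually prove Lemma \ref{prop of V}: it treats \eqref{constant cv} and \eqref{monotonicity} as classical, citing Lemma 2.1 of \cite{hamburgerregularity}, and derives \eqref{v estimate p less 2} from \eqref{constant cv} as in Lemma 2 of \cite{KuusiMingione_nonlinearStein}. What you have written is essentially the standard argument underlying those references: the representation $V(z_1)-V(z_2)=\bigl(\int_0^1 DV(z_t)\,dt\bigr)(z_1-z_2)$ together with the two-sided comparison $\int_0^1 |z_t|^{\gamma}\,dt \asymp (|z_1|+|z_2|)^{\gamma}$ for $\gamma>-1$ (applied with $\gamma=\frac{p-2}{2}$ for \eqref{constant cv} and $\gamma=p-2$ for \eqref{monotonicity}), and the dichotomy $|z_1|\le 2|z_2|$ versus $|z_1|>2|z_2|$ for \eqref{v estimate p less 2}, which is exactly how Kuusi--Mingione deduce it. Your direct Hessian argument for \eqref{monotonicity} is self-contained and slightly cleaner than routing through \eqref{constant cv}. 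The only place where I would ask for more detail is the upper bound in the scalar comparison when $1<p<2$: a bare compactness appeal is delicate because the integrand is not continuous in $(z_1,z_2)$ at configurations where the segment meets the origin. It closes explicitly, though: writing $t_0$ for the parameter of closest approach one has $|z_t|\ge |t-t_0|\,|z_1-z_2|$, which gives $\int_0^1|z_t|^{\frac{p-2}{2}}dt\le \frac{4}{p}|z_1-z_2|^{\frac{p-2}{2}}$, and this suffices when $|z_1-z_2|\ge \frac12\max(|z_1|,|z_2|)$, while in the complementary case $|z_t|\ge \frac12\max(|z_1|,|z_2|)$ for all $t$ and the bound is immediate. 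With that point filled in, the proof is complete.
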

The estimates \eqref{constant cv} and \eqref{monotonicity} are classical (cf. lemma 2.1, \cite{hamburgerregularity}). The estimate \eqref{v estimate p less 2} follows from this 
(cf. lemma 2, \cite{KuusiMingione_nonlinearStein}).

\subsection{Local and boundary regularity for the linear $d-\delta$ system}
Here we collect the local and up to the boundary linear estimates for the Hodge systems, sometimes also called $\operatorname*{div}$-$\operatorname*{curl}$ systems 
or $d-\delta$ systems, that we are going to use. Since we would mostly be using them for balls, we state them here for balls as well.  
We begin with the local estimates.  
\begin{theorem}[local estimates]\label{linearlocal estimates}
	Let $0 < r< R $ and $1 < r,s  < \infty, $ $0 < \lambda < n+2 $ be real numbers and $0 < \theta \leq \infty$. Suppose  $u \in W^{1, s} 
	\left(B_{R}; \Lambda^{k}\mathbb{R}^{n}\otimes \mathbb{R}^{N} \right)$ is a local solution to  
	\begin{align}\label{local linear system}
	\left\lbrace \begin{aligned}
	du &= f &&\text{ in } B_{R}, \\
	\delta u &=g &&\text{ in } B_{R} . 
	\end{aligned}\right. 
	\end{align}
	Then whenever $B(x,r) \subset B_{R}$ is a ball (not necessarily concentric to $B_{R}$), we have the following. 
	\begin{itemize}
		\item[(i)] If $f \in L^{(r,\theta)}_{loc} \left(B_{R}; \Lambda^{k+1}(\mathbb{R}^{n}; \mathbb{R}^{N}) \right)$ and $g \in L^{(r,\theta)}_{loc} \left(B_{R}; \Lambda^{k-1}(\mathbb{R}^{n}; \mathbb{R}^{N}) \right)$ then 
		$u \in W_{loc}^{1, (r,\theta )} \left(B_{R}; \Lambda^{k+1}(\mathbb{R}^{n}; \mathbb{R}^{N}) \right)$ and there exists a constant $c > 0,$ depending only on $n,N,k,\theta $ such that we have the estimate
		\begin{align}\label{local estimate Lp}
		\left\lVert \nabla u \right\rVert_{L^{r,\theta}(B(x,r/2))} \leq c \left(  \left\lVert f \right\rVert_{L^{r,\theta}(B(x,r))}  + \left\lVert g \right\rVert_{L^{r,\theta}(B(x,r))} + \left\lVert \nabla u \right\rVert_{L^{s}(B(x,r))}\right). 
		\end{align}
		
		\item[(ii)] If $f \in \mathcal{L}^{2,\lambda }_{loc} \left(B_{R}; \Lambda^{k+1}(\mathbb{R}^{n}; \mathbb{R}^{N}) \right)$ and $g \in \mathcal{L}^{2,\lambda }_{loc} \left(B_{R}; \Lambda^{k-1}(\mathbb{R}^{n}; \mathbb{R}^{N}) \right),$  then 
		$\nabla u \in \mathcal{L}^{2,\lambda }_{loc}\left( B_{R}; \Lambda^{k}\mathbb{R}^{n}\otimes \mathbb{R}^{N} \right)$ and there exists a constant $c > 0,$ depending only on $n,N,k,\lambda$ such that we have the estimate
		\begin{align}\label{local estimate campanato}
		\left\lVert \nabla u \right\rVert_{\mathcal{L}^{2,\lambda }(B(x,r/2))} \leq  c\left( \left\lVert f \right\rVert_{\mathcal{L}^{2,\lambda }(B(x,r))} + \left\lVert g \right\rVert_{\mathcal{L}^{2,\lambda }(B(x,r))} + \left\lVert \nabla u \right\rVert_{L^{s}(B(x,r))} \right) . 
		\end{align}
	\end{itemize} 
\end{theorem}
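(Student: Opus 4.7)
The plan is to deduce both (i) and (ii) from the boundedness of second-order Riesz-type singular integrals on the relevant function space scale, combined with a cutoff argument and a Sobolev--Poincar\'{e} bootstrap that absorbs the lower-order terms involving $u$ into the $L^{s}$ norm of $\nabla u$. Fix a ball $B(y,\rho) \subset B_{R}$, pick $\eta \in C_{c}^{\infty}(B(y,\rho))$ with $\eta \equiv 1$ on $B(y,\rho/2)$ and $\lvert \nabla \eta \rvert \le c/\rho$, and extend $\eta u$ by zero to $\mathbb{R}^{n}$. Denoting by $\mathcal{N}$ the fundamental solution of $\Delta$, the identity $\eta u = \mathcal{N} \ast \Delta(\eta u) = \mathcal{N} \ast ( d\delta + \delta d )(\eta u)$ followed by a single integration by parts produces a representation
\begin{equation*}
\nabla(\eta u) \;=\; T_{1}\bigl[d(\eta u)\bigr] + T_{2}\bigl[\delta(\eta u)\bigr],
\end{equation*}
where $T_{1}, T_{2}$ are convolution operators with Calder\'{o}n--Zygmund kernels (homogeneous of degree $-n$, with vanishing mean on spheres).

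Next, I invoke the extension of the $L^{p}$ Calder\'{o}n--Zygmund bounds to Lorentz spaces (via Marcinkiewicz interpolation from the classical $L^{p}$ bounds) for part (i), and to the Campanato scale $\mathcal{L}^{2,\lambda}$ with $0 < \lambda < n+2$ (via direct oscillation estimates exploiting the cancellation and decay of the kernels, in the spirit of Campanato--Stampacchia) for part (ii). Expanding
\begin{equation*}
d(\eta u) = \eta f + d\eta \wedge u, \qquad \delta(\eta u) = \eta g - d\eta \lrcorner u,
\end{equation*}
and using that $d\eta \wedge c = 0$ and $d\eta \lrcorner c = 0$ for every constant $c$, so that the lower-order terms are unchanged under $u \mapsto u - (u)_{B(y,\rho)}$, the above yields, in either scale $X$,
\begin{equation*}
\lVert \nabla u \rVert_{X(B(y,\rho/2))} \;\le\; c \bigl( \lVert f \rVert_{X(B(y,\rho))} + \lVert g \rVert_{X(B(y,\rho))} + \rho^{-1} \lVert u - (u)_{B(y,\rho)} \rVert_{X(B(y,\rho))} \bigr).
\end{equation*}

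To convert the last term into $\lVert \nabla u \rVert_{L^{s}}$ I iterate on a finite chain of concentric balls of radii between $r/2$ and $r$. Poincar\'{e}--Sobolev in the Lorentz (respectively Campanato) scale controls $\rho^{-1}\lVert u - (u)_{B(y,\rho)} \rVert$ by $\lVert \nabla u \rVert$ at a lower Sobolev exponent; feeding this back into the basic estimate at successively lower exponents, and using H\"{o}lder on the bounded ball once the target exponent drops to $s$, absorbs everything into $\lVert \nabla u \rVert_{L^{s}(B(x,r))}$ in finitely many steps whose number depends only on $n, r, s$ (and on $\lambda$ for part (ii)).

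The step I expect to be delicate is the Campanato boundedness of the operators $T_{1}, T_{2}$, which has to be established directly from the kernel cancellation and decay rather than being inferred from the $L^{p}$ theory. The arbitrariness of $s$ is a secondary but genuine issue: one must arrange the finite Sobolev iteration carefully so as not to overshoot at the critical exponents, but once this is done the bootstrap closes uniformly with constants depending only on the stated parameters.
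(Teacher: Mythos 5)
The paper does not actually prove this theorem: it treats it as the standard interior theory for the constant--coefficient elliptic system \eqref{local linear system}, citing Theorem 5.14 of Giaquinta--Martinazzi (whose proof goes through the Campanato comparison method), and obtains the Lorentz case (i) from the $L^{p}$ case by real interpolation. Your route is genuinely different and self-contained: a potential representation $\nabla(\eta u)=T_{1}[d(\eta u)]+T_{2}[\delta(\eta u)]$ with second derivatives of the Newtonian potential as kernels, Calder\'{o}n--Zygmund bounds on the Lorentz and Campanato scales, and a finite Sobolev bootstrap to absorb the cut-off terms into $\lVert\nabla u\rVert_{L^{s}}$. What interpolation buys the paper is that (i) is free once the $L^{p}$ theory is known; what your approach must pay for is precisely the $\mathcal{L}^{2,\lambda}$-boundedness of $T_{1},T_{2}$ across all three regimes $\lambda<n$ (Morrey), $\lambda=n$ (BMO) and $n<\lambda<n+2$ (Schauder), which is classical (Peetre--Spanne) but is exactly the content that the comparison method packages differently. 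Both routes are valid.

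One claim in your write-up is false as stated: $d\eta\wedge c$ and $d\eta\lrcorner c$ do \emph{not} vanish for a constant form $c$ (take $k=0$ and $c$ a nonzero constant scalar, so $d\eta\wedge c=c\,d\eta\neq0$), so the lower-order terms are \emph{not} invariant under $u\mapsto u-(u)_{B(y,\rho)}$. The subtraction is nevertheless legitimate, for a simpler reason: $u-(u)_{B(y,\rho)}$ solves \eqref{local linear system} with the same data $f,g$ and the same gradient, so the entire representation should be applied to $u-(u)_{B(y,\rho)}$ from the outset; this yields exactly the displayed inequality you want. Two further points deserve care rather than a wave of the hand. First, in the Campanato regime $\lambda\geq n$ the quantity $\rho^{-1}\lVert u-(u)_{B}\rVert_{\mathcal{L}^{2,\lambda}(B)}$ is a H\"{o}lder-type seminorm of $u$ itself, and a ``Poincar\'{e} inequality in the $\mathcal{L}^{2,\lambda}$ scale'' bounding it by a weaker norm of $\nabla u$ is not available until you have already upgraded $\nabla u$ via part (i); the cleaner device is to observe that $d\eta$ is supported in the outer annulus, so on the half-ball the cut-off terms $T_{i}[d\eta\wedge(u-(u)_{B})]$ are integrals against smooth kernels and are controlled pointwise by $\fint_{B}\lvert\nabla u\rvert$. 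Second, the bootstrap in the exponents must indeed dodge the critical cases $s^{\ast}=\infty$; you flag this, and lowering the exponent at each step handles it. With these repairs the argument closes.
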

The result is standard local estimates for constant coefficient elliptic system (cf. for example, theorem 5.14 of 
\cite{giaquinta-martinazzi-regularity}), the extension to Lorentz spaces follows in the usual way via interpolation.

\paragraph*{} Now we turn to boundary estimates, which are often also called Gaffney or Gaffney-Friedrichs inequality. 
Both estimates follows from the $L^{p}$ and Schauder estimates for the constant coefficient linear elliptic system \eqref{local linear system}  and goes back to 
Morrey \cite{MorreyHarmonic2} (see also \cite{bolik}, \cite{Sil_linearregularity} for linear estimates for more general Hodge type systems). The $L^{p}$ estimates extend to the scale of Lorentz spaces by interpolation and this is the form in which we state the results. 

\begin{theorem}[boundary estimates]\label{linear boundary estimates}
	Let $R > 0$ and $1 < r,s  < \infty, $ $0 < \lambda < n+2 $ be real numbers and $0 < \theta \leq \infty$. Suppose  $u \in W^{1, s} 
	\left(B_{R}; \Lambda^{k}\mathbb{R}^{n}\otimes \mathbb{R}^{N} \right)$ is a solution to 
	\begin{align}\label{boundary linear system}
	\left\lbrace \begin{aligned}
	du &= f &&\text{ in } B_{R}, \\
	\delta u &=g &&\text{ in } B_{R} \\
	\nu \wedge u &= 0 &&\text{ on } \partial B_{R}. 
	\end{aligned}\right. 
	\end{align}
	\begin{itemize}
		\item[(i)] If $f \in L^{(r,\theta)} \left(B_{R}; \Lambda^{k+1}(\mathbb{R}^{n}; \mathbb{R}^{N}) \right)$ and $g \in L^{(r,\theta)} \left(B_{R}; \Lambda^{k-1}(\mathbb{R}^{n}; \mathbb{R}^{N}) \right)$ then 
		$u \in W^{1, (r,\theta )} \left(B_{R}; \Lambda^{k+1}(\mathbb{R}^{n}; \mathbb{R}^{N}) \right)$ and there exists a constant $c > 0,$ depending only on $n,N,k,\theta $ such that we have the estimate
		\begin{align}\label{linear boundary Lp}
		\left\lVert \nabla u \right\rVert_{L^{r,\theta}(B_{R})} \leq c \left(  \left\lVert f \right\rVert_{L^{r,\theta}(B_{R})}  + \left\lVert g \right\rVert_{L^{r,\theta}(B_{R})}\right). 
		\end{align}
		
		\item[(ii)] If $f \in \mathcal{L}^{2,\lambda } \left(B_{R}; \Lambda^{k+1}(\mathbb{R}^{n}; \mathbb{R}^{N}) \right)$ and $g \in \mathcal{L}^{2,\lambda } \left(B_{R}; \Lambda^{k-1}(\mathbb{R}^{n}; \mathbb{R}^{N}) \right),$  then 
		$\nabla u \in \mathcal{L}^{2,\lambda }\left( B_{R}; \Lambda^{k}\mathbb{R}^{n}\otimes \mathbb{R}^{N} \right)$ and there exists a constant $c > 0,$ depending only on $n,N,k,\lambda$ such that we have the estimate
		\begin{align}\label{linear boundary holder}
		\left\lVert \nabla u \right\rVert_{\mathcal{L}^{2,\lambda }(B_{R})} \leq  c\left( \left\lVert f \right\rVert_{\mathcal{L}^{2,\lambda }(B_{R})} + \left\lVert g \right\rVert_{\mathcal{L}^{2,\lambda }(B_{R})} \right) . 
		\end{align}
	\end{itemize}
\end{theorem}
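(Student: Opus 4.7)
The plan is to derive both estimates from the classical boundary $L^p$ and Schauder theory for the constant-coefficient first-order Hodge system $(d,\delta)$ with the tangential trace condition, and then to upgrade the $L^p$ bound to the Lorentz scale by real interpolation for part (i), and to pass to Campanato spaces by a standard comparison and iteration argument for part (ii).

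For part (i), I begin with the base $L^r$ estimate. The system \eqref{boundary linear system} is an elliptic boundary value problem in the sense of Agmon-Douglis-Nirenberg: the principal symbol of $(d,\delta)$ is injective for $\xi\neq 0$, and together with the trace $\nu \wedge \cdot$ on $\partial B_R$ it satisfies the complementing (Lopatinski-Shapiro) condition, as one checks on the half-space. The classical $L^p$ theory for such problems, worked out specifically for Hodge systems in Morrey \cite{MorreyHarmonic2}, Bolik \cite{bolik} and Sil \cite{Sil_linearregularity}, yields
$$ \lVert \nabla u \rVert_{L^r(B_R)} \leq c\bigl( \lVert f \rVert_{L^r(B_R)} + \lVert g \rVert_{L^r(B_R)} + \lVert u \rVert_{L^r(B_R)} \bigr) \quad \text{for every } 1 < r < \infty. $$
The lower-order term is eliminated on the ball because $\mathcal{H}_T(B_R)=\{0\}$ (recorded in the preliminaries) and $u$ can be taken orthogonal to this (trivial) kernel, so a Poincar\'{e}-type inequality gives $\lVert u\rVert_{L^r} \le c\lVert \nabla u\rVert_{L^r}$. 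Viewing the solution operator $(f,g)\mapsto \nabla u$ as a bounded linear map simultaneously from $L^{r_0}\times L^{r_0}$ into $L^{r_0}$ and from $L^{r_1}\times L^{r_1}$ into $L^{r_1}$ for arbitrary $1<r_0<r<r_1<\infty$, the real interpolation identity $(L^{r_0},L^{r_1})_{\eta,\theta}=L^{(r,\theta)}$ with $\tfrac{1}{r}=\tfrac{1-\eta}{r_0}+\tfrac{\eta}{r_1}$ (see \cite{SteinWeiss_Fourieranalysis}) yields \eqref{linear boundary Lp} for every $1<r<\infty$ and $0<\theta\le\infty$.

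For part (ii), I would use a Campanato-type comparison. Fix $x_0\in\overline{B_R}$ and $0<\sigma<\rho$, and split $u=v+w$ on $B_\rho(x_0)\cap B_R$, with $v$ solving the homogeneous Hodge system ($dv=0$, $\delta v=0$, and $\nu\wedge v=0$ on the relevant portion of the boundary) with the trace of $u$ matched elsewhere. The harmonic comparison $v$ enjoys the full decay
$$ \fint_{B_\sigma(x_0)\cap B_R} \lvert \nabla v-(\nabla v)_\sigma\rvert^2 \leq c\Bigl(\tfrac{\sigma}{\rho}\Bigr)^{n+2}\fint_{B_\rho(x_0)\cap B_R}\lvert \nabla v-(\nabla v)_\rho\rvert^2, $$
whereas $w=u-v$ solves the inhomogeneous system with zero boundary data and is controlled by the $L^2$ version of part (i) applied to $(f,g)$ localized in $B_\rho(x_0)\cap B_R$, producing a contribution of order $\rho^\lambda[f,g]_{\mathcal{L}^{2,\lambda}}^2$. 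A standard Campanato iteration lemma (e.g.\ Lemma~5.13 of \cite{giaquinta-martinazzi-regularity}) combines the two contributions into the desired decay for $\nabla u$, yielding \eqref{linear boundary holder}. The main technical obstacle is establishing the decay estimate for $v$ near the boundary portion $\partial B_R\cap\overline{B_\rho(x_0)}$; this is handled either by flattening the boundary and invoking half-space Schauder estimates, or directly by ADN-type boundary Schauder estimates for the Hodge system on the ball. Both routes are classical, which is why, as the paper itself remarks, the result is essentially a collation of well-known ingredients.
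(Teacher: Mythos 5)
Your proposal is correct and follows essentially the same route as the paper, which itself gives no detailed proof but simply invokes the classical $L^p$ and Schauder (Campanato) theory for the constant-coefficient Hodge system with tangential boundary condition (Morrey, Bolik, Sil) together with real interpolation to reach the Lorentz scale, and drops the lower-order term using the triviality of $\mathcal{H}_T(B_R)$ exactly as you do. Your sketch merely spells out the ingredients the paper cites, so there is nothing further to compare.
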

\begin{remark} Note that there is no term containing $u$ on the right hand side of the estimates above as the domain is ball, which being contractible has 
	only trivial DeRham cohomology. Thus, the system \eqref{boundary linear system} has uniqueness and the usual term containing $u$ on the right hand side can be dropped by the standard 
	contradiction-compactness argument. \end{remark}

The above estimates combined with the Sobolev inequality and a contradiction argument yields the following Poincar\'{e}-Sobolev inequality, which is crucial for our  purposes.  
\begin{proposition}[Poincar\'{e}-Sobolev inequality]\label{poincaresobolev}
	Let $N \geq 1,$ $n \geq 2$ and $1 \leq k \leq n-1$ be integers. Let $R > 0$ and $1 < s < \infty$ be real numbers. Then for any $u \in W_{\delta,T}^{1,s}\left( B_{R} ; \Lambda^{k}\mathbb{R}^{n}\otimes \mathbb{R}^{N}\right),$ there exists a constant $c >0,$ depending only on $k,n,N,s$ such that 
	\begin{equation}\label{poincaresobolevineq}
	\left( \fint_{B_{R}} \lvert u \rvert^{s^*} \right)^{\frac{1}{s^*}} \leq c R \left( \fint_{B_{R}} \lvert du \rvert^{s} \right)^{\frac{1}{s}}. 
	\end{equation}
\end{proposition}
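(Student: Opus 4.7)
The plan is to combine the boundary estimate in Theorem~\ref{linear boundary estimates} with the classical Sobolev embedding $W^{1,s}(B_R)\hookrightarrow L^{s^{*}}(B_R)$ and a standard contradiction--compactness argument. Under the rescaling $v(x):=u(Rx)$ the asserted inequality becomes scale invariant (the factor $R$ arises from $dv(x)=R\,(du)(Rx)$ together with the ratio of volumes), and $v$ again lies in $W^{1,s}_{\delta,T}(B_1)$, so I may assume $R=1$. Applying Theorem~\ref{linear boundary estimates}(i) with $r=s$ and $\theta=s$ (so that $L^{(r,\theta)}=L^{s}$) to $u\in W^{1,s}_{\delta,T}(B_1)$, with $f:=du$ and $g:=\delta u=0$, yields
\begin{equation*}
\|\nabla u\|_{L^{s}(B_1)}\leq c\,\|du\|_{L^{s}(B_1)}.
\end{equation*}
The absence of any lower-order term on the right is exactly the gain provided by the remark following that theorem: since $\mathcal{H}_{T}(B_1;\Lambda^{k})=\{0\}$ on the contractible ball, no $\|u\|$ term is needed.

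Composing with the Sobolev embedding $W^{1,s}(B_1)\hookrightarrow L^{s^{*}}(B_1)$ gives
\begin{equation*}
\|u\|_{L^{s^{*}}(B_1)}\leq c\bigl(\|\nabla u\|_{L^{s}(B_1)}+\|u\|_{L^{s}(B_1)}\bigr)\leq c\bigl(\|du\|_{L^{s}(B_1)}+\|u\|_{L^{s}(B_1)}\bigr),
\end{equation*}
so the only remaining step is to absorb the $L^{s}$ norm of $u$. I would do this by contradiction: if the proposition failed on $B_1$, there would be a sequence $(u_n)\subset W^{1,s}_{\delta,T}(B_1)$ with $\|u_n\|_{L^{s^{*}}}=1$ and $\|du_n\|_{L^{s}}\to 0$. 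The gradient bound above forces $\nabla u_n\to 0$ in $L^{s}$, while $\|u_n\|_{L^{s}}\leq c\|u_n\|_{L^{s^{*}}}$ by H\"{o}lder; hence $(u_n)$ is bounded in $W^{1,s}(B_1)$, and Rellich--Kondrachov yields, along a subsequence, $u_n\to u_\infty$ strongly in $L^{s}$ and weakly in $W^{1,s}$.

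Since $\nabla u_n\to 0$ strongly in $L^{s}$, the limit satisfies $\nabla u_\infty=0$ and is therefore constant on $B_1$. Continuity of the tangential trace in $W^{1,s}$ forces $\nu\wedge u_\infty=0$ on $\partial B_1$; but for $1\leq k\leq n-1$ a \emph{constant} $k$-form $\omega$ with $e\wedge\omega=0$ for every unit $e\in\mathbb{R}^{n}$ must vanish (expanding $\omega$ in the canonical basis, each multi-index of length $k$ has a free index outside it), and hence $u_\infty=0$. Combined with $\nabla u_n\to 0$, this gives $u_n\to 0$ strongly in $W^{1,s}(B_1)$, and the continuous Sobolev embedding finally yields $u_n\to 0$ in $L^{s^{*}}$, contradicting the normalization $\|u_n\|_{L^{s^{*}}}=1$. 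The main delicate point is precisely this identification of $u_\infty$ with zero: it is the coclosedness $\delta u=0$ together with the vanishing tangential trace, in the contractible setting, which forces the limiting harmonic field to be trivial and closes the argument.
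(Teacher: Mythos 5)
Your proof is correct and follows essentially the same route as the paper's: the boundary Gaffney estimate \eqref{linear boundary Lp} with $g=0$ and $r=\theta=s$, the Sobolev embedding, and a compactness--contradiction argument whose crux is that a constant $k$-form with $1\leq k\leq n-1$ and vanishing tangential part on the sphere must be zero. The only cosmetic difference is that the paper runs the contradiction for the intermediate Poincar\'{e} inequality $\lVert u\rVert_{L^{s}}\leq c\lVert \nabla u\rVert_{L^{s}}$ (normalizing in $L^{s}$) rather than directly for the target inequality.
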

\begin{proof}
	Clearly, it is enough to prove the result for $R=1.$ Now since $u \in W_{\delta,T}^{1,s}\left( B_{1} ; \Lambda^{k}\mathbb{R}^{n}\otimes \mathbb{R}^{N}\right),$ \eqref{linear boundary Lp} with $g=0$ and $r=\theta = s$ implies the estimate 
	\begin{equation}
	\lVert \nabla u \rVert_{L^{s}} \leq c \lVert du \rVert_{L^{s}}.
	\end{equation}
	In view of the Sobolev inequality, this implies the desired result as soon as we prove the Poincar\'{e} inequality
	$$ \lVert u \rVert_{L^{s}} \leq c \lVert \nabla u \rVert_{L^{s}}.$$ 
	But if this is not true, then there exists a sequence $u_{\mu}$ such that $\left\lVert u_{\mu} \right\rVert_{L^{s}} = 1$ and 
	$\left\lVert \nabla u_{\mu} \right\rVert_{L^{s}} \leq \frac{1}{\mu} $for all $\mu \geq 1.$ Thus $\left\lVert u_{\mu} \right\rVert_{W^{1, s}}$ is uniformly bounded and consequently $u_{\mu} \stackrel{W^{1,s}}{\rightharpoonup} u$ for some $u \in W^{1,s}.$ But then by compact Sobolev embedding, 
	$\left\lVert u \right\rVert_{L^{s}} = 1$ and $ \left\lVert \nabla u \right\rVert_{L^{s}} \leq \liminf \left\lVert \nabla u_{\mu} \right\rVert_{L^{s}} = 0.$ But this implies $u$ is a constant form. But no non-zero constant form can satisfy the boundary condition $\nu\wedge u = 0$ on $\partial B_{1}= S^{n-1}.$ Thus $u\equiv 0$ and this contradicts the fact that $\left\lVert u \right\rVert_{L^{s}} = 1.$ 
\end{proof}

\subsection{On existence and weak formulations}\label{existencediscussion} Throughout the rest of the article, we shall often start with a local weak solution $u \in W_{loc}^{1,p}\left( \Omega ; \Lambda^{k}\mathbb{R}^{n}\otimes \mathbb{R}^{N}\right)$ of quasilinear systems of the type 
\begin{align}\label{existence system}
\delta \left( a(x) \left\lvert du \right\rvert^{p-2} du \right) &= f  &&\text{ in } \Omega, \tag{$\mathcal{P}$}
\end{align}
with or without the additional condition $\delta u = 0 $ in $\Omega.$ So whether such a solution \emph{exists} is the first order of business. The existence is actually not as straight forward as one might think, since trying to minimize the corresponding energy functional over $W^{1,p},$ with, say, homogeneous Dirichlet boundary values, one immediately realizes that the functional control only the $L^{p}$ norm of $du$ and thus is not coercive on $W^{1,p}.$ However,  one can still show ( see \cite{BandDacSil} for $N=1,$ \cite{Sil_semicontinuity} for the general case ) the existence of a minimizer for the following two minimization problems
\begin{gather}
m = \inf\left\lbrace \int_{\Omega} \left( a(x)\left\lvert du \right\rvert^{p} - \langle F ; du \rangle \right): u \in u_{0} + W^{1,p}_{0}\left( \Omega ; \Lambda^{k}\mathbb{R}^{n}\otimes \mathbb{R}^{N}\right) \right\rbrace \label{w01p minimization}\\\text{ and } \notag \\ m = \inf\left\lbrace \int_{\Omega} \left( a(x)\left\lvert du \right\rvert^{p}- \langle F ; du \rangle \right): u \in u_{0} + W^{1,p}_{\delta, T}\left( \Omega ; \Lambda^{k}\mathbb{R}^{n}\otimes \mathbb{R}^{N}\right) \right\rbrace, \label{wdeltaT1p minimization} 
\end{gather}
as long as $ F \in L^{p'}\left( \Omega ; \Lambda^{k+1}(\mathbb{R}^{n}; \mathbb{R}^{N})\right).$ But since $\delta f = 0$ ( in the sense of distributions ) is clearly a necessary condition for solving \eqref{existence system}, we can take $F = d\theta \in W^{1,d^{*}} \hookrightarrow L^{p'},$  where $\theta \in W^{2,d}$ is the unique solution of 
\begin{align*}
\left\lbrace \begin{aligned}
\delta d \theta &= f &&\text{ in } \Omega, \\
\delta \theta &= 0  &&\text{ in } \Omega, \\
\nu \wedge \theta &= 0  &&\text{ on } \partial\Omega, 
\end{aligned}\right.
\end{align*}
which exists ( see e.g. \cite{Sil_linearregularity} ) as long as $f \in L^{d}$ is coclosed, where $d$ is the exponent given by 
\begin{align}\label{ddef}
d := \left\lbrace \begin{aligned}
&\frac{np}{np -n + p} &&\text{ if } 1< p < n, \\
& 1+ \varepsilon &&\text{ if } p \geq n, \text{ for any } \varepsilon >0 .  \\
\end{aligned} \right. 
\end{align} 
Then we can write, since $\delta F = f$ in $\Omega,$ 
\begin{align*}
\int_{\Omega} \left( a(x)\left\lvert du \right\rvert^{p} + \langle f ; u \rangle \right) &= \int_{\Omega} \left( a(x)\left\lvert du \right\rvert^{p} - \langle F ; du \rangle \right) + \int_{\partial\Omega}   \langle F ; \nu \wedge u_{0} \rangle .
\end{align*}
Since $f,u_{0}$ are given data, the last integral is a constant irrelevant for the minimization. Note that the minimizer to \eqref{wdeltaT1p minimization} satisfies $\delta u = 0$ and is unique by \eqref{poincaresobolev}. This is the one we shall be using the most. For this minimization problem, clearly the space of test function is $W^{1,p}_{\delta, T}$ and the weak formulation is 
\begin{align*}
\int_{\Omega} \left\langle a(x)\left\lvert du \right\rvert^{p-2}d u - F; d\phi \right\rangle = 0 \quad \text{ for all } \phi \in W^{1,p}_{\delta, T}\left( \Omega ; \Lambda^{k}\mathbb{R}^{n}\otimes \mathbb{R}^{N}\right),
\end{align*}  
which, by our definition of $F$ is easily seen to be equivalent to, 
\begin{align*}
\int_{\Omega} \left\langle a(x)\left\lvert du \right\rvert^{p-2}d u ; d\phi \right\rangle = - \int_{\Omega} \left\langle f ; \phi \right\rangle \quad \text{ for all } \phi \in W^{1,p}_{\delta, T}\left( \Omega ; \Lambda^{k}\mathbb{R}^{n}\otimes \mathbb{R}^{N}\right).
\end{align*}
Note also that the integral on the right makes sense by \eqref{ddef}. We summarize the preceding discussion in the following 
\begin{proposition}\label{minimizerexistenceprop}
	Let $d$ be the exponent in \eqref{ddef} and 
	$a:\Omega \rightarrow [\gamma, L]$, where $ 0 < \gamma < L < \infty,$ is a measurable map. Then for any ball $B_{R},$ any $1 < p < \infty ,$ any $u_{0} \in W^{1,p}\left( \Omega ; \Lambda^{k}\mathbb{R}^{n}\otimes \mathbb{R}^{N}\right)$ and for any $f \in L^{d}\left( \Omega ; \Lambda^{k}\mathbb{R}^{n}\otimes \mathbb{R}^{N}\right), $ the quasilinear boundary value problem 
	\begin{align}
	\left\lbrace \begin{aligned}
	\delta \left( a(x) \left\lvert d u \right\rvert^{p-2} du \right) &= f &&\text{ in } B_{R}, \\
	\delta u &= \delta u _{0}  &&\text{ in } B_{R}, \\
	\nu \wedge u &= \nu \wedge u_{0}  &&\text{ on } \partial B_{R}, 
	\end{aligned}\right.
	\end{align}
	admits a unique solution $u \in u_{0} + W^{1,p}_{\delta, T}\left( B_{R} ; \Lambda^{k}\mathbb{R}^{n}\otimes \mathbb{R}^{N}\right). $ Moreover, the solution is the unique minimizer to the minimization problem 
$$m = \inf\left\lbrace \int_{B_{R}} \left( a(x)\left\lvert du \right\rvert^{p} +  \langle f ; u \rangle \right): u \in u_{0} + W^{1,p}_{\delta, T}\left( B_{R} ; \Lambda^{k}\mathbb{R}^{n}\otimes \mathbb{R}^{N}\right) \right\rbrace . $$ \end{proposition}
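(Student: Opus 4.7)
The plan is to apply the direct method of the calculus of variations to the modified functional
\[ J(u) := \int_{B_{R}} \left( a(x)\lvert du \rvert^{p} - \langle F; du\rangle \right) \]
on the affine class $u_{0} + W^{1,p}_{\delta,T}(B_{R};\Lambda^{k}\mathbb{R}^{n}\otimes\mathbb{R}^{N})$, where $F = d\theta \in L^{p'}$ is produced from the linear Hodge problem with datum $f$ exactly as in the discussion preceding the proposition. Since $J$ differs from the stated objective only by an additive constant in the boundary data, it has the same minimizers on this class.

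For existence I would first establish coercivity. Writing $w := u - u_{0} \in W^{1,p}_{\delta,T}(B_{R})$, Proposition \ref{poincaresobolev} applied to $w$ controls $\lVert w\rVert_{L^{p}}$, and hence $\lVert u\rVert_{L^{p}}$, by $\lVert dw\rVert_{L^{p}}$. Combined with $a(x) \ge \gamma > 0$ and Young's inequality used to absorb the linear term, this gives $J(u) \ge \tfrac{\gamma}{2} \lVert du\rVert_{L^{p}}^{p} - C$ with $C$ depending only on the data. The nonlinear term is weakly lower semicontinuous on $W^{1,p}$ by convexity of $z \mapsto \lvert z\rvert^{p}$ and positivity of $a$, while the linear term in $F$ is weakly continuous. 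Since $u_{0} + W^{1,p}_{\delta,T}$ is a weakly closed affine subspace of $W^{1,p}$, the direct method produces a minimizer $u$, which automatically satisfies $\delta u = \delta u_{0}$ and $\nu \wedge u = \nu\wedge u_{0}$ by membership in the admissible class.

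For uniqueness I would rely on the strict monotonicity estimate \eqref{monotonicity}: if $u_{1}, u_{2}$ are two solutions, then $w = u_{1}-u_{2}$ lies in $W^{1,p}_{\delta,T}$ and is therefore an admissible test function; subtracting the two weak equations and testing against $w$ produces
\[ \int_{B_{R}} \langle a\lvert du_{1}\rvert^{p-2}du_{1} - a\lvert du_{2}\rvert^{p-2}du_{2}; dw\rangle = 0, \]
and the pointwise monotonicity bound forces $dw = 0$; Proposition \ref{poincaresobolev} then yields $w = 0$. The Euler--Lagrange identity is obtained by using $u + t\phi$, with $\phi \in W^{1,p}_{\delta,T}$, as an inner variation and integrating by parts with $F = d\theta$, $\delta\theta = 0$ in $B_{R}$, $\nu\wedge\theta = 0$ on $\partial B_{R}$ and $\nu\wedge\phi = 0$, exactly as in the paragraph immediately preceding the proposition. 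The only genuinely delicate point in the scheme is the coercivity of $J$ on $u_{0} + W^{1,p}_{\delta,T}$ -- the usual $W^{1,p}_{0}$ framework fails for the reasons outlined in the ``lack of ellipticity'' discussion -- and it is precisely this step that forced the preparation of Proposition \ref{poincaresobolev} and the choice of the gauge-fixed admissible class.
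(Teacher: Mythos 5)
Your proposal follows essentially the same route as the paper: the paper reduces to the minimization problem \eqref{wdeltaT1p minimization} with $F=d\theta$ obtained from the linear Hodge problem, observes that the two functionals differ by a boundary constant, delegates existence of the minimizer to the references \cite{BandDacSil} and \cite{Sil_semicontinuity}, and gets uniqueness from Proposition \ref{poincaresobolev}; you simply carry out the direct method yourself instead of citing it, and your uniqueness and Euler--Lagrange arguments are the standard ones. The one point to tighten is the compactness step: coercivity gives a bound on $\lVert du\rVert_{L^{p}}$, and Proposition \ref{poincaresobolev} then bounds $\lVert w\rVert_{L^{p}}$ for $w=u-u_{0}$, but to extract a weak $W^{1,p}$ limit (and hence to keep the constraints $\delta w=0$ and $\nu\wedge w=0$ in the limit, since the admissible class is weakly closed only as a subset of $W^{1,p}$) you also need the full gradient bound $\lVert \nabla w\rVert_{L^{p}}\leq c\lVert dw\rVert_{L^{p}}$, i.e.\ the Gaffney estimate \eqref{linear boundary Lp} with $g=0$. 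This is exactly the first display in the proof of Proposition \ref{poincaresobolev}, so the omission is cosmetic rather than structural, but as written the coercivity paragraph does not yet justify weak sequential compactness in $W^{1,p}$.
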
\smallskip 

\subsubsection{Enlarging the space of test functions} For our techniques, it is crucial that we work with the weak formulation on the space $W^{1,p}_{\delta, T}\left( B_{R} ; \Lambda^{k}\mathbb{R}^{n}\otimes \mathbb{R}^{N}\right)$ instead of the usual space $W^{1,p}_{0}\left( B_{R} ; \Lambda^{k}\mathbb{R}^{n}\otimes \mathbb{R}^{N}\right).$ So we need to show that indeed such a weak formulation is valid. 
\begin{proposition}\label{weakformulationprop}
	Let $u \in W_{loc}^{1,p}\left( \Omega ; \Lambda^{k}\mathbb{R}^{n}\otimes \mathbb{R}^{N}\right)$ be a local weak solution of \eqref{existence system}, i.e. for any ball $B_{R}\subset \subset \Omega,$ $u$ satisfies the weak formulation 
	\begin{align}\label{weak formulation in W1p}
	\int_{B_{R}} \left\langle a(x) \left\lvert du \right\rvert^{p-2} du; d\psi \right\rangle = \int_{B_{R}} \left\langle f; \psi \right\rangle \qquad \text{ for all } \psi \in W_{0}^{1,p}\left( B_{R} ; \Lambda^{k}\mathbb{R}^{n}\otimes \mathbb{R}^{N}\right).
	\end{align} Then $u$ also satisfies 
	\begin{align}\label{weak formulation}
	\int_{B_{R}} \left\langle a(x) \left\lvert du \right\rvert^{p-2} du; d\phi \right\rangle = \int_{B_{R}} \left\langle f; \phi \right\rangle \qquad \text{ for all } \phi \in W_{\delta, T}^{1,p}\left( B_{R} ; \Lambda^{k}\mathbb{R}^{n}\otimes \mathbb{R}^{N}\right).
	\end{align} 
\end{proposition}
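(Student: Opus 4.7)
The strategy is to test the $W^{1,p}_{0}$ weak formulation \eqref{weak formulation in W1p} with a regularized version of $\phi$ and pass to the limit. Let $\chi_{\varepsilon}(x) = \hat{\chi}_{\varepsilon}(|x|)$ be a \emph{radial} cutoff with $\hat{\chi}_{\varepsilon} \in C^{\infty}([0,R])$, $0 \leq \hat{\chi}_{\varepsilon} \leq 1$, $\hat{\chi}_{\varepsilon} \equiv 1$ on $[0, R-\varepsilon]$, $\hat{\chi}_{\varepsilon}(R) = 0$, and $|\hat{\chi}_{\varepsilon}'| \leq C/\varepsilon$. Then $\chi_{\varepsilon} \phi \in W^{1,p}_{0}(B_{R}; \Lambda^{k}\mathbb{R}^{n}\otimes \mathbb{R}^{N})$ is a legitimate test function in \eqref{weak formulation in W1p}, and the Leibniz rule $d(\chi_{\varepsilon}\phi) = d\chi_{\varepsilon} \wedge \phi + \chi_{\varepsilon}\, d\phi$ produces
$$
\int_{B_{R}} \langle a(x) |du|^{p-2} du;\, \chi_{\varepsilon}\, d\phi \rangle + I_{\varepsilon} = \int_{B_{R}} \langle f;\, \chi_{\varepsilon}\, \phi \rangle, \qquad I_{\varepsilon} := \int_{B_{R}} \langle a(x) |du|^{p-2} du;\, d\chi_{\varepsilon} \wedge \phi \rangle.
$$
The first integral on the left and the right-hand side converge by dominated convergence to their unadorned counterparts in \eqref{weak formulation}, so everything reduces to showing $I_{\varepsilon} \to 0$ as $\varepsilon \to 0^{+}$.

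For this, write $d\chi_{\varepsilon} = \hat{\chi}_{\varepsilon}'(r)\, dr$ and, on the annulus $A_{\varepsilon} := B_{R} \setminus \overline{B_{R-\varepsilon}}$, decompose $\phi$ into its radial and transverse parts, $\phi = dr \wedge \phi_{N} + \phi_{T}$, where $\phi_{T}$ contains no factor of $dr$. Then $d\chi_{\varepsilon} \wedge \phi = \hat{\chi}_{\varepsilon}'(r)\, dr \wedge \phi_{T}$, so pointwise $|d\chi_{\varepsilon} \wedge \phi| \leq |\hat{\chi}_{\varepsilon}'(r)|\, |\phi_{T}|$. The boundary condition $\nu \wedge \phi = 0$ on $\partial B_{R}$ is precisely the statement that the trace of $\phi_{T}$ vanishes on $\partial B_{R}$ (no information is given on $\phi_{N}$), while the polar change of variables shows $\phi_{T} \in W^{1,p}(A_{\varepsilon})$. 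Applying the one-dimensional Hardy inequality slicewise in $r$ gives
$$
\int_{A_{\varepsilon}} \frac{|\phi_{T}|^{p}}{(R-r)^{p}}\, dx \leq C \int_{A_{\varepsilon}} |\partial_{r} \phi_{T}|^{p}\, dx \leq C\, \|\nabla \phi\|_{L^{p}(A_{\varepsilon})}^{p},
$$
and since $|\hat{\chi}_{\varepsilon}'(r)| \leq C/\varepsilon \leq C/(R-r)$ on $A_{\varepsilon}$, H\"{o}lder's inequality yields
$$
|I_{\varepsilon}| \leq L\, \|du\|_{L^{p}(A_{\varepsilon})}^{p-1}\, \|\hat{\chi}_{\varepsilon}'\, \phi_{T}\|_{L^{p}(A_{\varepsilon})} \leq C\, \|du\|_{L^{p}(A_{\varepsilon})}^{p-1}\, \|\nabla \phi\|_{L^{p}(A_{\varepsilon})} \longrightarrow 0
$$
by absolute continuity of the integral, since $|A_{\varepsilon}| \to 0$.

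The main obstacle is the control of $I_{\varepsilon}$: an arbitrary cutoff would produce a boundary-layer integrand involving the entirety of $\phi$, which need not vanish on $\partial B_{R}$ (only its tangential part does), and the Hardy step would fail. Choosing $\chi_{\varepsilon}$ \emph{radial} is what forces $d\chi_{\varepsilon} \wedge \phi$ to isolate exactly the component $\phi_{T}$ whose trace vanishes — this structural observation, rather than the subsequent Hardy calculation, is the heart of the argument.
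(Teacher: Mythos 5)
Your argument is correct, but it is a genuinely different route from the paper's. The paper proves the proposition by a Hodge-type decomposition: given $\phi \in W^{1,p}_{\delta,T}$, it solves a Dirichlet problem $d\psi = d\phi$, $\psi|_{\partial B_R} = 0$ to produce $\psi \in W^{1,p}_{0}$, then solves a second boundary value problem to write $\phi = \psi + d\theta$ with $\nu\wedge\theta = 0$, using the triviality of $\mathcal{H}_{T}(B_R)$ to conclude the decomposition is exact; the exact piece $d\theta$ then contributes nothing to either side of the identity (the left because $d(d\theta)=0$, the right because $\delta f = 0$, which is itself forced by \eqref{weak formulation in W1p}). Your proof instead approximates $\phi$ directly by $\chi_{\varepsilon}\phi \in W^{1,p}_{0}$ and kills the boundary-layer term $I_{\varepsilon}$ by combining the structural identity $d\chi_{\varepsilon}\wedge\phi = \hat{\chi}_{\varepsilon}'\,dr\wedge\phi_{T}$ — which correctly identifies $\nu\wedge\phi=0$ with the vanishing of the trace of $\phi_{T}$ — with the boundary Hardy inequality (valid since $p>1$). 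This is entirely elementary: it uses no linear solvability theory, no cohomological input, and does not even invoke $\delta f = 0$; it would extend to any Lipschitz domain supporting the Hardy inequality, with $\operatorname{dist}(\cdot,\partial\Omega)$ in place of $R-r$. The paper's route is shorter given that the linear machinery is already developed and needed elsewhere. Two small points to tighten: in passing from $\partial_{r}\phi_{T}$ to $\nabla\phi$ you differentiate the moving coframe, which produces bounded zeroth-order terms on the annulus (away from the origin), so the correct bound is $\|\partial_{r}\phi_{T}\|_{L^{p}(A_{\varepsilon})} \leq C\|\phi\|_{W^{1,p}(A_{\varepsilon})}$ rather than $C\|\nabla\phi\|_{L^{p}(A_{\varepsilon})}$ — harmless, since this norm also tends to zero; and the slicewise Hardy step tacitly uses that zero Sobolev trace on $\partial B_R$ gives vanishing radial limits along a.e.\ ray, which is standard but worth a word.
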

\begin{proof}
	Given any $\phi \in W_{\delta, T}^{1,p}\left( B_{R} ; \Lambda^{k}\mathbb{R}^{n}\otimes \mathbb{R}^{N}\right),$ the trick is to write $\phi = \psi + d\theta ,$ with $\psi \in W_{0}^{1,p}\left( B_{R} ; \Lambda^{k}\mathbb{R}^{n}\otimes \mathbb{R}^{N}\right).$ To this end, using the fact that $\nu\wedge \phi = 0 $ on $\partial B_{R},$ we first solve ( cf. Theorem 8.16 of \cite{CsatoDacKneuss} for $p \geq 2$ and Theorem 2.47 of \cite{silthesis}  for the general case ),  
	\begin{align*}
	\left\lbrace \begin{aligned}
	d\psi &= d\phi &&\text{ in } B_{R}\\
	\psi &= 0 &&\text{ on } \partial B_{R}. 
	\end{aligned}\right. 
	\end{align*}
	Next, we solve (cf. Theorem 9 in \cite{Sil_linearregularity} ) 
	\begin{align*}
	\left\lbrace \begin{aligned}
	\delta d\theta &=  - \delta \psi &&\text{ in } B_{R}\\
	\delta \theta &= 0 &&\text{ in } B_{R}\\ 
	\nu\wedge\theta &= 0 &&\text{ on } \partial B_{R}. 
	\end{aligned}\right. 
	\end{align*}
	Now it is easy to check that $\phi - \psi - d\theta \in \mathcal{H}_{T}\left( B_{R} ; \Lambda^{k}\mathbb{R}^{n}\otimes \mathbb{R}^{N}\right)$ and hence is identically zero in $B_{R}.$ 
\end{proof}

\subsection{Regularity for the homogeneous constant coefficient system}
We begin with the classical estimates for a constant coefficient homogeneous system, which essentially goes back to Uhlenbeck \cite{uhlenbecknonlinearelliptic} ( see also  \cite{hamburgerregularity} ). Let 
$v \in W_{loc}^{1,p}\left( \Omega ; \Lambda^{k}\mathbb{R}^{n}\otimes \mathbb{R}^{N}\right)$ to a local solution to 
\begin{align}\label{p laplacian homogeneous frozen}
\delta ( a(x_{0}) \lvert dv \rvert^{p-2} dv)   = 0   \qquad \text{ in } \Omega.
\end{align}
The following two results are essentially proved in theorem 3 and lemma 3 of \cite{KuusiMingione_nonlinearStein}, respectively.
\begin{theorem}\label{theoremhomogeneousfrozen}
	Let $v$ be as in \eqref{p laplacian homogeneous frozen}, then $dv$ is locally 
	H\"{o}lder continuous (with an exponent $\beta_{1}$ given below) on 
	$\Omega$. Moreover, 
	\begin{itemize}
		\item[(i)] There exist constants $\bar{c_{2}} \equiv \bar{c_{2}}\left( n, N, k, p , \gamma, L \right)$ and $\beta_{2} \equiv \beta_{2} \left( n, N, k, p , \gamma, L \right) \in (0,1)$
		such that the estimate 
		\begin{multline}\label{Vdvscaling}
		\left( \fint_{B (x_{0}, \rho)} \left\lvert  V(d v) - \left( V(d v) \right)_{B(x_{0},\rho)} \right\rvert^{2} \right)^{\frac{1}{2}}  \\ 
		\leq \bar{c_{2}}\left( \frac{\rho}{R}\right)^{\beta_{2}} \left( \fint_{B(x_{0},R)} \left\lvert  V(d v) - \left( V(d v) \right)_{B(x_{0},R)} \right\rvert^{2} \right)^{\frac{1}{2}} 
		\end{multline}
		holds for whenever $0 < \rho < R$ and $B(x,R) \subset \Omega. $
		\item[(ii)] There exists a constant $c_{1} \geq 1$ 
		depending only on $n,N, k, p, \gamma, L $ such that the estimate 
		\begin{equation}\label{supestimatefrozenhomogeneous}
		\sup\limits_{B(x,R/2)} \left\lvert dv \right\rvert  \leq c_{1} \fint_{B(x,R)} \left\lvert d v \right\rvert
		\end{equation}
		holds whenever $B(x,R) \subset \Omega. $
		\item[(iii)] For every $A \geq 1$ there exist constants
		$$ c_{2} \equiv c_{2}\left( n, N, k, p, \gamma, L, A \right) \equiv \widetilde{c}_{2}\left( n, N, k, p , \gamma, L \right)A $$
		and $\beta_{1} \equiv \beta_{1} \left( n, N, k, p , \gamma, L \right) \in (0,1)$ such that 
		\begin{gather}\label{oscillationcontrolfrozenhomogeneous}
		\sup\limits_{B(x,R/2)} \left\lvert d v \right\rvert \leq A\lambda  \Longrightarrow \quad 
		\operatorname*{osc}\limits_{B(x, \tau R)} \left( d v \right) 
		\leq c_{2}\tau^{\beta_{1}}\lambda  \end{gather}
		for every $\tau \in ( 0, \frac{1}{2}).$ 
	\end{itemize}
	
\end{theorem}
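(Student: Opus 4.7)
The plan is to reduce to the classical regularity theory developed by Uhlenbeck \cite{uhlenbecknonlinearelliptic} and Hamburger \cite{hamburgerregularity}. Since $a(x_{0})$ is a positive constant, dividing \eqref{p laplacian homogeneous frozen} through by it leaves the equation $\delta(\lvert dv \rvert^{p-2} dv) = 0$. Because adding a closed form to $v$ alters neither $dv$ nor the equation, on any ball $B \subset \Omega$ I would perform the gauge fixing described in the introduction and replace $v$ by a representative satisfying $\delta v = 0$ in $B$. In this Coulomb gauge, the pair $(\delta v = 0,\ \delta(\lvert dv \rvert^{p-2}dv) = 0)$ becomes an Uhlenbeck-type elliptic system for forms, to which the local $C^{1,\alpha}$ theory of \cite{uhlenbecknonlinearelliptic, hamburgerregularity} applies and delivers Hölder continuity of $dv$ with some universal exponent depending only on $n,N,k,p,\gamma,L$.

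For the sup estimate (ii), I would derive a Caccioppoli inequality by differentiating the equation and testing it with $\eta^{2}\partial_{s} v$. Combined with the monotonicity inequality \eqref{monotonicity} from Lemma \ref{prop of V}, this yields that suitable powers of $\lvert dv \rvert$ are subsolutions of a uniformly elliptic equation, and standard Moser iteration produces the $L^{\infty}$--$L^{1}$ bound \eqref{supestimatefrozenhomogeneous}. This is the forms analogue of Lemma 3 of \cite{KuusiMingione_nonlinearStein} and is implicit in Uhlenbeck's and Hamburger's work.

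For the $V$-excess decay (i), the key point is that $V(dv)$ is the natural harmonic-like quantity for the $p$-Laplace operator. Differentiating the equation once more and using \eqref{constant cv} to translate between $dv$ and $V(dv)$ gives a Caccioppoli inequality for $\nabla V(dv)$. The classical alternative scheme then applies: on balls where $dv$ is nondegenerate the system is uniformly elliptic in $v$ and linear Campanato-type decay is available, whereas on balls where $dv$ is degenerate $V(dv)$ is itself small; iteration yields \eqref{Vdvscaling} with an exponent $\beta_{2}$ depending only on the structural parameters. This is essentially Theorem 3 of \cite{KuusiMingione_nonlinearStein} transposed to the forms setting.

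Part (iii) follows by combining (i) and (ii). Under the assumption $\sup_{B(x,R/2)} \lvert dv \rvert \leq A\lambda$, the Campanato--Morrey isomorphism applied to \eqref{Vdvscaling} gives Hölder continuity of $V(dv)$ with an exponent depending on $\beta_{2}$, and inverting $V$ by means of \eqref{constant cv}, with the degenerate weight controlled by $(A\lambda)^{(p-2)/2}$, produces the oscillation bound \eqref{oscillationcontrolfrozenhomogeneous} with $c_{2}$ linear in $A$ and $\beta_{1} \in (0,1)$ depending only on $n,N,k,p,\gamma,L$. The main obstacle I expect is (i), because the degeneracy of the $p$-Laplace operator at $dv = 0$ prevents any direct linearization; the resolution is Uhlenbeck's alternative scheme, which here relies additionally on the gauge fixing to secure the Poincaré-type control on $v$ needed to close the Caccioppoli estimates on each ball.
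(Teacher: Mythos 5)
Your proposal is correct and follows essentially the same route as the paper, which does not prove this theorem from scratch but attributes it to the Uhlenbeck--Hamburger regularity theory for the homogeneous constant-coefficient $p$-harmonic system for forms (after dividing out the constant $a(x_{0})$ and gauge-fixing) and transposes Theorem 3 and Lemma 3 of Kuusi--Mingione for the quantitative statements (i)--(iii). Your sketch of the underlying mechanisms (Caccioppoli plus Moser iteration for the sup bound, the degenerate/nondegenerate alternative for the $V$-excess decay, and inversion of $V$ via \eqref{constant cv} for the oscillation bound) is consistent with those references.
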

\begin{lemma}\label{gradvscalinggeneral}
	Let $v$ be as in \eqref{p laplacian homogeneous frozen}. Then for every choice of $0 < \bar{\varepsilon} < 1$ and $A \geq 1$ there exists a constant 
	$\sigma_{2} \in (0, 1/2)$ depending only on $n,k,p,\bar{\varepsilon} $ and $A,$ such that if $\sigma \in (0, \sigma_{2}]$ and 
	\begin{equation}\label{bothsideboundfrozengeneral}
	\frac{\lambda}{A} \leq  \sup\limits_{ B(x, \sigma R)} \left\lvert d v \right\rvert \leq  \sup\limits_{B (x, R /2 ) } \left\lvert d v \right\rvert \leq A\lambda,
	\end{equation}
	then 
	\begin{equation}\label{gradvscalingequationgeneral}
	\left( \fint_{B(x, \sigma R)} \left\lvert  d v - \left( d v \right)_{\sigma B_{j}} \right\rvert^{t} \right)^{\frac{1}{t}} 
	\leq \bar{\varepsilon}\left( \fint_{B(x,R)} \left\lvert d v - \left( d v \right)_{B(x,R)}\right\rvert^{t} \right)^{\frac{1}{t}}
	\end{equation}
	whenever $t \in [1,2]$ and $B(x,R) \subset \Omega. $
\end{lemma}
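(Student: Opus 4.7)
The strategy is to translate the decay \eqref{Vdvscaling} of the $V(dv)$-oscillation into a decay for the $dv$-oscillation, via the bi-Lipschitz property of $V$ from \eqref{constant cv}. The hypothesis \eqref{bothsideboundfrozengeneral} and the quantitative oscillation control \eqref{oscillationcontrolfrozenhomogeneous} conspire to make $|dv|$ uniformly nondegenerate on the inner ball $B(x, \sigma R)$, which is precisely the regime where $V$ is bi-Lipschitz. Specifically, by \eqref{oscillationcontrolfrozenhomogeneous}, $\operatorname*{osc}\nolimits_{B(x, \sigma R)}(dv) \leq \tilde c_2 A \sigma^{\beta_1}\lambda$, which is $\leq \lambda/(2A)$ once $\sigma \leq \sigma_1 := (2\tilde c_2 A^2)^{-1/\beta_1}$. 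Combined with a point $y_0 \in B(x, \sigma R)$ at which $|dv(y_0)| \geq \lambda/A$ (produced by the sup lower bound and the continuity of $dv$), this yields
$$\tfrac{\lambda}{2A} \leq |dv(y)| \leq A\lambda, \qquad y \in B(x, \sigma R).$$

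On this nondegenerate inner ball, \eqref{constant cv} gives the pointwise bound $|dv(y_1) - dv(y_2)| \leq C(A, p)\lambda^{(2-p)/2} |V(dv)(y_1) - V(dv)(y_2)|$. Raising to the $t$-th power, averaging over $B(x, \sigma R)\times B(x,\sigma R)$, using the elementary identity $\fint_B |f - (f)_B|^t \leq C(t)\fint\fint|f(y) - f(z)|^t\,dy\,dz$ together with Jensen's inequality (valid since $t \in [1,2]$) to pass from $L^t$ to $L^2$ on the $V$-side, and then invoking \eqref{Vdvscaling} from $B(x, \sigma R)$ up to $B(x, R/2)$, one obtains
$$\Bigl(\fint_{B(x,\sigma R)} |dv - (dv)_{B(x,\sigma R)}|^t\Bigr)^{1/t} \leq C(A,p)\lambda^{(2-p)/2}\bar c_2 (2\sigma)^{\beta_2} \Bigl(\fint_{B(x,R/2)} |V(dv) - (V(dv))_{B(x,R/2)}|^2\Bigr)^{1/2}.$$
For $p \geq 2$, the upper bound $|dv| \leq A\lambda$ on $B(x, R/2)$ together with \eqref{constant cv} gives $|V(z_1) - V(z_2)| \leq c_V(2A\lambda)^{(p-2)/2}|z_1 - z_2|$, so the last factor is controlled by $C(A,p)\lambda^{(p-2)/2}$ times the $L^2$ $dv$-oscillation on $B(x, R/2)$. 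By the quantitative H\"older regularity of $dv$ (Theorem~\ref{theoremhomogeneousfrozen}) and the higher integrability of solutions of the homogeneous frozen system (Gehring's lemma), this $L^2$ oscillation is comparable to the $L^t$ oscillation on $B(x, R)$ up to a constant depending only on $n, k, p, A$. Chaining everything, the two factors $\lambda^{(2-p)/2}$ and $\lambda^{(p-2)/2}$ cancel and we obtain
$$\Bigl(\fint_{B(x,\sigma R)} |dv - (dv)_{B(x,\sigma R)}|^t\Bigr)^{1/t} \leq C(n, k, p, A)\sigma^{\beta_2} \Bigl(\fint_{B(x,R)} |dv - (dv)_{B(x,R)}|^t\Bigr)^{1/t},$$
and one sets $\sigma_2 := \min\{\sigma_1, (\bar\varepsilon/C)^{1/\beta_2}\}$, completing the proof for $p \geq 2$.

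The principal obstacle is the case $1 < p < 2$: here $V$ is only globally $p/2$-H\"older (rather than Lipschitz), and the outer conversion degrades into the non-multiplicative bound $(V\text{-osc})^2 \lesssim C(p)(dv\text{-osc})^p$ obtained via \eqref{v estimate p less 2} and Jensen's inequality. The cleanest remedy is a dichotomy: in the regime where the $L^t$ $dv$-oscillation on $B(x, R)$ is comparable to $\lambda$ the absolute estimate $\mathrm{LHS}\leq C\sigma^{\beta_2}\lambda$ suffices, while in the small-oscillation regime a compactness and linearization argument around the (nonzero) constant limit of $dv$ is required, exploiting that in the limit the equation becomes uniformly elliptic with smooth coefficients and hence enjoys linear excess decay.
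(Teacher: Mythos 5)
The first thing to note is that the paper itself offers no proof of this lemma: immediately above the statement it declares that Theorem \ref{theoremhomogeneousfrozen} and Lemma \ref{gradvscalinggeneral} ``are essentially proved in theorem 3 and lemma 3 of'' Kuusi--Mingione, so you are reconstructing an argument the paper only cites. Your skeleton for $p\ge 2$ --- nondegeneracy $\lambda/(2A)\le |dv|\le A\lambda$ on $B(x,\sigma R)$ obtained from \eqref{oscillationcontrolfrozenhomogeneous} and the lower bound in \eqref{bothsideboundfrozengeneral}, the conversion from the $dv$-excess to the $V(dv)$-excess on the inner ball with factor $\lambda^{(2-p)/2}$, the decay \eqref{Vdvscaling}, and the reverse conversion with factor $\lambda^{(p-2)/2}$ on $B(x,R/2)$ using only the upper bound --- is exactly the Kuusi--Mingione mechanism and is correct as far as it goes.

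There are, however, two genuine gaps. First, even for $p\ge 2$ your chain terminates with the $L^{2}$ excess of $dv$ over $B(x,R/2)$, and you pass to the $L^{t}$ excess over $B(x,R)$ (with $t$ possibly equal to $1$) by invoking ``quantitative H\"older regularity and Gehring's lemma''. Neither tool delivers this: Gehring gives a reverse H\"older inequality for $|dv|$, not for $|dv-(dv)_{B}|$, and the oscillation estimate \eqref{oscillationcontrolfrozenhomogeneous} controls the oscillation by $\lambda$, not by the excess (note that $dv-\gamma$ does not solve the frozen system, so none of the three estimates in Theorem \ref{theoremhomogeneousfrozen} applies to it). The inequality you actually need,
$\bigl(\fint_{B(x,R/2)}|dv-(dv)_{B(x,R/2)}|^{2}\bigr)^{1/2}\le c\,\fint_{B(x,R)}|dv-(dv)_{B(x,R)}|$,
is true but is itself a nontrivial lemma, requiring a Caccioppoli inequality for $V(dv)-V(\gamma)$ combined with a Sobolev--Poincar\'{e} inequality (equivalently, the sharper form of \eqref{Vdvscaling} with an $L^{1}$ excess on the right-hand side, which is how Kuusi--Mingione state it); it must be supplied, not asserted. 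Second, for $1<p<2$ you correctly diagnose that the outer conversion breaks down for lack of a lower bound on $|dv|$ over $B(x,R/2)$, but the proposed remedy --- a dichotomy plus a ``compactness and linearization'' argument in the small-excess regime --- is a program rather than a proof: the small-excess case, where one must linearize around a constant that may itself be degenerate relative to the excess, is precisely the hard case, and it is left entirely unexecuted. Since the lemma is used for the full range $1<p<\infty$ in the proof of Theorem \ref{main theorem}, half of the statement remains unproven in your proposal.
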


\section{Nonlinear Stein theorem for forms}
\subsection{Homogeneous system with Dini coefficients}
In this section, we prove continuity of the exterior derivative for the homogeneous system with Dini continuous coefficients that we shall use in the proof of the general case. We would be using these intermediate results only for the 
case $p >2,$ so we focus only on that case for now.\smallskip 

\noindent Let $p>2$ and $w \in W_{loc}^{1,p}\left( \Omega ; \Lambda^{k}\mathbb{R}^{n}\otimes \mathbb{R}^{N}\right)$ to be a local solution to 
\begin{align}\label{quasilinear maxwell tangential homogeneous dini}
\delta ( a(x) \lvert dw \rvert^{p-2} dw  )  = 0   \qquad \text{ and }
\delta w = 0 \qquad \text{ in } \Omega. 
\end{align}
\begin{theorem}\label{dwcontinuityhomogeneousDini}
	Let $w \in W_{loc}^{1,p}\left( \Omega ; \Lambda^{k}\mathbb{R}^{n}\otimes \mathbb{R}^{N}\right)$ be as in \eqref{quasilinear maxwell tangential homogeneous dini} with $p >2,$ 
	then $dw$ is continuous in $\Omega$. 
	Moreover, 
	\begin{itemize}
		\item[(i)] There exists a constant 
		$c_{3} = c_{3}(n,N, k,p, \gamma, L, \omega( \cdot )) \geq 1$ and a positive radius $R_{1}= R_{1}(n,N, k,p, \gamma, L, \omega( \cdot )) >0$ such that if $R \leq R_{1}$, then the 
		estimate 
		$$ \sup\limits_{B(x_{0}, R/2)} \left\lvert d w \right\rvert \leq c_{3}  \fint_{B(x_{0}, R)} \left\lvert dw \right\rvert, $$
		holds whenever $B(x_{0},R) \subset \Omega.$ If $a(\cdot )$ is a constant function, the estimate holds without any restriction on 
		$R.$
		\item[(ii)] Assume that the inequality 
		$$ \sup\limits_{B(x_{0}, R/2)} \left\lvert d w \right\rvert \leq A\lambda $$
		hold for some $A \geq 1$ and $\lambda > 0.$ Then for any $\delta \in (0,1)$ there exists a positive constant 
		$\sigma_{3} \equiv \sigma_{3} (n,N, k,p, \gamma, L, \omega( \cdot ), A, \delta ) \in (0, \frac{1}{4})$ such that for every $\sigma \leq \sigma_{3},$ we have,  
		$$ \operatorname*{osc}\limits_{B(x_{0}, \sigma R)} d w \leq \delta \lambda .$$
	\end{itemize}
\end{theorem}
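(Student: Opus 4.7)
The plan is to execute the Dini-coefficient linearization scheme of \cite{KuusiMingione_nonlinearStein} in the gauge-fixed setting. Fix a ball $B(x_0, R) \subset \Omega$ and let $v \in w + W^{1,p}_{\delta, T}(B(x_0,R); \Lambda^k \mathbb{R}^n \otimes \mathbb{R}^N)$ be the unique solution (furnished by Proposition \ref{minimizerexistenceprop} with $f = 0$ and coefficient frozen at $x_0$) to
\begin{align*}
\left\{\begin{aligned}
\delta(a(x_0)|dv|^{p-2}dv) &= 0 &&\text{in } B(x_0, R),\\
\delta v &= 0 &&\text{in } B(x_0, R),\\
\nu \wedge v &= \nu \wedge w &&\text{on } \partial B(x_0, R).
\end{aligned}\right.
\end{align*}
By construction $v$ solves the frozen homogeneous system \eqref{p laplacian homogeneous frozen}, so Theorem \ref{theoremhomogeneousfrozen} and Lemma \ref{gradvscalinggeneral} apply to $v$.

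The next step is a quantitative comparison between $w$ and $v$. Since $\phi := w - v$ lies in $W^{1,p}_{\delta, T}(B(x_0,R))$, Proposition \ref{weakformulationprop} lets me test both weak formulations against $\phi$ and subtract, yielding
\begin{align*}
\int_{B(x_0, R)} a(x_0) \langle |dw|^{p-2}dw - |dv|^{p-2}dv, d(w - v)\rangle = \int_{B(x_0, R)} (a(x_0) - a(x)) \langle |dw|^{p-2}dw, d(w - v)\rangle.
\end{align*}
The monotonicity estimate \eqref{monotonicity} bounds the left-hand side below by $c \int |dw - dv|^p$ for $p > 2$, while H\"older's inequality together with the Dini bound $|a(x) - a(x_0)| \leq \omega(R)$ controls the right-hand side by $c\,\omega(R)\|dw\|_{L^p}^{p-1} \|d(w-v)\|_{L^p}$. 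Rearranging produces the comparison estimate
\begin{equation*}
\left(\fint_{B(x_0, R)} |dw - dv|^p\right)^{1/p} \leq c\,\omega(R)^{1/(p-1)} \left(\fint_{B(x_0, R)} |dw|^p\right)^{1/p}.
\end{equation*}

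With this in hand, the remainder follows the dyadic iteration of \cite{KuusiMingione_nonlinearStein} essentially verbatim. Setting $B_j := B(x_0, \sigma^j R)$ with $\sigma$ small and letting $v_j$ denote the frozen solution on $B_j$ as above, I combine the sup estimate \eqref{supestimatefrozenhomogeneous} for $v_j$ with the comparison on $B_j$ to telescope and obtain (i), provided $R \leq R_1$, where $R_1$ is chosen so that the Dini sum $\sum_j \omega(\sigma^j R)^{1/(p-1)}$ is small enough; when $a$ is constant the comparison term vanishes and no restriction on $R$ is needed. For (ii), combining Lemma \ref{gradvscalinggeneral} applied to $v_j$ with the comparison and iterating yields an oscillation excess decay, again using Dini summability of $\omega$. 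Continuity of $dw$ is an immediate consequence: the averages $(dw)_{B(x_0, r)}$ are Cauchy as $r \to 0$ with modulus controlled by the Dini integral $\int_0^R \omega(t)^{1/(p-1)} \frac{dt}{t}$.

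The main obstacle is to make the comparison work without access to $W^{1,p}_0$ test functions: the lack of ellipticity rules out the classical Dirichlet approach, and it is precisely the gauge condition $\delta w = \delta v = 0$, together with the enlarged weak formulation of Proposition \ref{weakformulationprop} and the Poincar\'e-Sobolev inequality of Proposition \ref{poincaresobolev} (used to guarantee well-posedness and energy control of the frozen problem), that makes $\phi = w - v$ an admissible test form. Once these ingredients are set up, the nonlinear iteration machinery of Kuusi-Mingione can be invoked with only notational changes, which is why I expect the remainder of the proof to consist largely in quoting their arguments.
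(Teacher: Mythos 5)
Your setup — the frozen-coefficient comparison problem with tangential boundary data, the use of Proposition \ref{weakformulationprop} to test with $\phi = w - v \in W^{1,p}_{\delta,T}$, and the appeal to Theorem \ref{theoremhomogeneousfrozen} for the frozen solution — matches the paper. But there is a genuine gap at the heart of the iteration. Your comparison estimate
$\bigl(\fint |dw-dv|^p\bigr)^{1/p} \le c\,\omega(R)^{1/(p-1)}\bigl(\fint |dw|^p\bigr)^{1/p}$
is correct (monotonicity gives the lower bound $\int|dw-dv|^p$ for $p>2$, and H\"older or Young handles the right-hand side), but the exponent $1/(p-1)<1$ is fatal: the hypothesis is only that $\omega$ satisfies the Dini condition $\int_0^1 \omega(t)\,\frac{dt}{t}<\infty$, and this does \emph{not} imply $\sum_j \omega(\sigma^j R)^{1/(p-1)}<\infty$ for $p>2$ (take $\omega(t)=(\log(1/t))^{-\theta}$ with $1<\theta\le p-1$). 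So the quantity you propose to make "small enough" by shrinking $R_1$, and the "Dini integral" $\int_0^R\omega(t)^{1/(p-1)}\frac{dt}{t}$ you invoke for the Cauchy property of the averages, can be infinite. Your argument proves the theorem only under the strictly stronger assumption that $\omega^{1/(p-1)}$ is Dini.

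This is precisely the difficulty the $V$-map is designed to overcome, and it is why the paper never iterates on $\fint|dw-dv_i|^p$. Lemma \ref{secondcomparison1} instead compares at the level of $V$: testing with $v_i-w$, using \eqref{constant cv} and absorbing via Young's inequality together with the minimality of $v_i$ (estimate \eqref{v minimizes}) yields
$\fint_{B_i}|V(dv_i)-V(dw)|^2 \le c_4[\omega(R_i)]^2\fint_{B_i}|dw|^p$,
i.e.\ an error of order $\omega(R_i)$ to the \emph{first} power for the $L^2$ quantity $\bigl(\fint|V(dv_i)-V(dw)|^2\bigr)^{1/2}$. The excess-decay iteration is then run on $E_2(V(dw),B_i)$ using the Campanato-type decay \eqref{Vdvscaling} for $V(dv_i)$, the error term $\frac{c\lambda^{p/2}}{\sigma^n}\omega(R_i)$ is summable exactly because $\omega$ is Dini, and continuity of $dw$ follows from continuity of $V(dw)$ since $V$ is a homeomorphism. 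To repair your proof you should replace the $L^p$ gradient comparison by this $V$-comparison and carry out the whole telescoping argument for $V(dw)$ rather than for $dw$.
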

\begin{remark} The corresponding theorem for $p$-Laplacian system that has been used in \cite{KuusiMingione_nonlinearStein} is deduced from its parabolic analogue proved in \cite{KuusiMingione_perutbationparabolic}. Here we give a direct proof of this result.  
	\end{remark}
	
\subsubsection{General setting for the proofs} Let $B(x, 2R) \subset \Omega$ be a fixed ball and for $i \geq 0,$ we set 

\begin{equation}\label{shrinkingballs1}
B_{i} \equiv B(x, R_{i}), \qquad R_{i}:= \sigma^{i} R, \qquad \sigma \in (0, \frac{1}{2}) .
\end{equation}
Now we define the maps $v_{i} \in w + W_{\delta,T}^{1,p}\left( B_{i} ; \Lambda^{k}\mathbb{R}^{n}\otimes \mathbb{R}^{N}\right)$ to be the unique solution of 
\begin{align}\label{quasilinear maxwell tangential homogeneous frozen i}
\left\lbrace \begin{aligned}
\delta ( a(x_{0}) \lvert dv_{i} \rvert^{p-2} dv_{i}) )  &= 0   &&\text{ in } B_{i},\\
\delta v_{i} &= 0 &&\text{ in } B_{i}, \\
\nu\wedge v_{i} &= \nu\wedge w &&\text{  on } \partial B_{i}.                \end{aligned} 
\right. 
\end{align}
Also, we define the quantities, for $i \geq 0$ and $r \geq 1,$ 
$$ m_{i}(G) := \left\lvert \left( G \right)_{B_{i}}\right\rvert \qquad \text{ and } \qquad 
E_{r}(G, B_{i}):=  \left( \fint_{B_{i}} \lvert G -  \left( G \right)_{B_{i}} \rvert^{r}\right)^{\frac{1}{r}}.$$
\begin{lemma}\label{secondcomparison1}
	Let $w,v_{i}$ be as before and  $i \geq 0.$ Then there exists a constant $c_{4} \equiv c_{4}\left( n, N, k, p, \gamma, L \right)$ such that we have the inequality
	\begin{align}\label{l1omega}
	\fint_{B_{i}} \lvert V(dv_{i}) - V(dw) \rvert^{2}  &\leq c_{4}\left[ \omega\left(R_{i}\right) \right]^{2}  
	\fint_{B_{i}} \lvert dw \rvert^{p}.
	\end{align}
\end{lemma}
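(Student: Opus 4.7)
The natural test function is $\phi := w - v_{i}$. First I would check that $\phi \in W_{\delta, T}^{1,p}(B_{i}; \Lambda^{k}\mathbb{R}^{n}\otimes \mathbb{R}^{N})$: the boundary condition $\nu\wedge v_{i} = \nu\wedge w$ on $\partial B_{i}$ gives $\nu\wedge\phi = 0$, and $\delta w = 0 = \delta v_{i}$ in $B_{i}$ gives $\delta\phi = 0$. By Proposition \ref{weakformulationprop} we may test the equation for $w$ with $\phi$, and by Proposition \ref{minimizerexistenceprop} the same holds for the frozen problem satisfied by $v_{i}$. Subtracting the two weak formulations yields
\begin{equation*}
\int_{B_{i}}\bigl\langle a(x)\lvert dw\rvert^{p-2}dw - a(x_{0})\lvert dv_{i}\rvert^{p-2}dv_{i} \,;\, dw - dv_{i}\bigr\rangle = 0.
\end{equation*}

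Rearranging so that only the frozen-coefficient difference appears on the left-hand side,
\begin{equation*}
a(x_{0})\int_{B_{i}}\bigl\langle \lvert dw\rvert^{p-2}dw - \lvert dv_{i}\rvert^{p-2}dv_{i}\,;\,dw-dv_{i}\bigr\rangle = \int_{B_{i}}(a(x_{0})-a(x))\bigl\langle\lvert dw\rvert^{p-2}dw\,;\,dw-dv_{i}\bigr\rangle.
\end{equation*}
The monotonicity estimate \eqref{monotonicity} together with the lower bound $a(x_{0})\geq\gamma$ bounds the left-hand side from below by a constant times $\int_{B_{i}}(\lvert dw\rvert+\lvert dv_{i}\rvert)^{p-2}\lvert dw-dv_{i}\rvert^{2}$. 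For the right-hand side the Dini continuity of $a$ gives $\lvert a(x)-a(x_{0})\rvert\leq\omega(R_{i})$ throughout $B_{i}$, so it is controlled by
\begin{equation*}
\omega(R_{i})\int_{B_{i}}\lvert dw\rvert^{p-1}\lvert dw-dv_{i}\rvert.
\end{equation*}

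Writing $\lvert dw\rvert^{p-1}\lvert dw-dv_{i}\rvert = \lvert dw\rvert^{p/2}\cdot\lvert dw\rvert^{(p-2)/2}\lvert dw-dv_{i}\rvert$ and using $\lvert dw\rvert^{(p-2)/2}\leq (\lvert dw\rvert+\lvert dv_{i}\rvert)^{(p-2)/2}$ for $p\geq 2$, the Cauchy--Schwarz inequality gives
\begin{equation*}
\int_{B_{i}}\lvert dw\rvert^{p-1}\lvert dw-dv_{i}\rvert \leq \Bigl(\int_{B_{i}}\lvert dw\rvert^{p}\Bigr)^{\!1/2}\Bigl(\int_{B_{i}}(\lvert dw\rvert+\lvert dv_{i}\rvert)^{p-2}\lvert dw-dv_{i}\rvert^{2}\Bigr)^{\!1/2}.
\end{equation*}
Absorbing the second factor into the left-hand side via Young's inequality yields
\begin{equation*}
\int_{B_{i}}(\lvert dw\rvert+\lvert dv_{i}\rvert)^{p-2}\lvert dw-dv_{i}\rvert^{2} \leq c\,[\omega(R_{i})]^{2}\int_{B_{i}}\lvert dw\rvert^{p},
\end{equation*}
and the upper bound in \eqref{constant cv} converts the left-hand side to $\int_{B_{i}}\lvert V(dw)-V(dv_{i})\rvert^{2}$ up to a constant depending on $n,N,k,p$. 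Dividing by $\lvert B_{i}\rvert$ gives \eqref{l1omega}.

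The main (very mild) obstacle is verifying that the choice $\phi=w-v_{i}$ is admissible as a test function in both weak formulations; this is where the enlargement of test functions in Proposition \ref{weakformulationprop} and the gauge condition $\delta w = \delta v_{i} = 0$ play their role. The case $p\geq 2$ is used to control $\lvert dw\rvert^{(p-2)/2}$ by $(\lvert dw\rvert+\lvert dv_{i}\rvert)^{(p-2)/2}$; for $1<p<2$ the same scheme works but one must instead use \eqref{v estimate p less 2} and rearrange the Hölder step accordingly.
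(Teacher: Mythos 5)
Your argument is correct and follows essentially the same route as the paper: test both weak formulations with $w - v_{i}$ (admissible by Propositions \ref{weakformulationprop} and \ref{minimizerexistenceprop}), isolate the coefficient oscillation $a(x)-a(x_{0})$, and conclude via the $V$-map estimates and Young's inequality. The only (harmless) deviation is that by keeping $\lvert dw\rvert^{p/2}$ rather than $(\lvert dw\rvert+\lvert dv_{i}\rvert)^{p/2}$ in the Cauchy--Schwarz step you bypass the energy comparison $\int_{B_{i}}\lvert dv_{i}\rvert^{p}\leq c\int_{B_{i}}\lvert dw\rvert^{p}$ that the paper invokes, at the cost of restricting to $p\geq 2$ in that step --- which is fine here since the lemma is only used for $p>2$.
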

\begin{proof}
	Weak formulation of \eqref{quasilinear maxwell tangential homogeneous dini} and \eqref{quasilinear maxwell tangential homogeneous frozen i} ( see Proposition \ref{weakformulationprop} ) gives 
	\begin{align*}
	\fint_{B_{i}} \left\langle a(x_{0})\left\lvert dv_{i}\right\rvert^{p-2}dv_{i} - a(x)\left\lvert dw\right\rvert^{p-2}dw; d\phi \right\rangle = 0, 
	\end{align*}
	for every $\phi \in W^{1,p}_{\delta, T}\left(B_{i};\Lambda^{k}\mathbb{R}^{n}\otimes \mathbb{R}^{N} \right).$
	Rewriting, we obtain,   
	\begin{multline*}
	\fint_{B_{i}} a(x_{0}) \left\langle \left\lvert dv_{i}\right\rvert^{p-2}dv_{i} - \left\lvert dw\right\rvert^{p-2}dw; d\phi \right\rangle \\= 
	\fint_{B_{i}} \left( a(x) - a(x_{0}) \right) \left\langle \left\lvert dw\right\rvert^{p-2}dw; d\phi \right\rangle 
	\end{multline*}
	for every $\phi \in W^{1,p}_{\delta, T}\left(B_{i};\Lambda^{k}\mathbb{R}^{n}\otimes \mathbb{R}^{N} \right).$
	Now we chose $\phi = v_{i}-w$ and by \eqref{constant cv} and using the fact that $a(x_{0}) \geq \gamma > 0,$ we have, by Young's inequality with $\varepsilon >0,$ 
	\begin{align*}
	\fint_{B_{i}}\lvert V(dv_{i}) - V(dw) \rvert^{2}  &\leq c \left\lvert  \fint_{B_{i}} \left( a(x) - a(x_{0}) \right) \left\langle \left\lvert dw\right\rvert^{p-2}dw; dv_{i}-dw \right\rangle\right\rvert \\
	&\leq c \omega\left(R_{i}\right) \fint_{B_{i}}\left\lvert dw\right\rvert^{p-1} \left\lvert dv_{i}-dw\right\rvert \\
	&\leq c  \omega\left(R_{i}\right) \fint_{B_{i}} \left( \left\lvert dw\right\rvert + \left\lvert dv_{i} \right\rvert  \right)^{p-1} \left\lvert dv_{i}-dw\right\rvert \\
	&= c  \omega\left(R_{i}\right) \fint_{B_{i}} \left( \left\lvert dw\right\rvert + \left\lvert dv_{i} \right\rvert  \right)^{\frac{p}{2}}
	\left( \left\lvert dw\right\rvert + \left\lvert dv_{i} \right\rvert  \right)^{\frac{p-2}{2}}\left\lvert dv_{i}-dw\right\rvert \\
	&\leq   c  \omega\left(R_{i}\right) \fint_{B_{i}} \left( \left\lvert dw\right\rvert + \left\lvert dv_{i} \right\rvert  \right)^{\frac{p}{2}}\lvert V(dv_{i}) - V(dw) \rvert \\
	&\leq \varepsilon \fint_{B_{i}}\lvert V(dv_{i}) - V(dw) \rvert^{2} 
	+ c\left[  \omega\left(R_{i}\right) \right]^{2} \fint_{B_{i}} \left( \left\lvert dw\right\rvert 
	+ \left\lvert dv_{i} \right\rvert  \right)^{p}.
	\end{align*}
	Since the functional $v \mapsto a(x_{0}) \int_{B_{i}} \left\lvert dv \right\rvert^{p}$ 
	on $ w +W^{1,p}_{\delta, T}\left(B_{i};\Lambda^{k}\mathbb{R}^{n}\otimes \mathbb{R}^{N} \right)$ is minimized by $v_{i},$ ( see Proposition \ref{minimizerexistenceprop} ) we have 
	\begin{equation}\label{v minimizes}
	\int_{B_{i}} \left\lvert dv_{i} \right\rvert^{p} \leq c \int_{B_{i}} \left\lvert dw \right\rvert^{p}
	\end{equation}
	Thus, we have, 
	\begin{align*}
	\fint_{B_{i}}\lvert V(dv_{i}) - V(dw) \rvert^{2} \leq \varepsilon \fint_{B_{i}}\lvert V(dv_{i}) - V(dw) \rvert^{2} 
	+ \frac{c \left[  \omega\left(R_{i}\right) \right]^{2}}{\varepsilon} \fint_{B_{i}} \left\lvert dw\right\rvert^{p}. 
	\end{align*}
	Choosing $\varepsilon > 0$ suitably small, we have \eqref{l1omega}. 
\end{proof}

\subsubsection{Pointwise bound}
\begin{theorem}\label{dwsupboundhomogeneousDini}
	Let $w \in W_{loc}^{1,p}\left( \Omega ; \Lambda^{k}\mathbb{R}^{n}\otimes \mathbb{R}^{N}\right)$ be as in \eqref{quasilinear maxwell tangential homogeneous dini} with $p >2.$ Then there exists a constant 
	$c = c(n,N, k,p, \gamma, L, \omega( \cdot )) \geq 1$ and a positive radius $R_{1}= R_{1}(n,N, k,p, \gamma, L, \omega( \cdot )) >0$ such that the pointwise estimate 
	$$ \left\lvert d w (x) \right\rvert \leq c \left( \fint_{B(x, R)} \left\lvert dw \right\rvert^{p}\right)^{\frac{1}{p}}, $$
	holds whenever $B(x,2R) \subset \Omega$, $2R \leq R_{1}$ and $x$ is a Lebesgue point of $d w.$ If $a(\cdot )$ is a constant function, the estimate holds without any restriction on 
	$R.$
\end{theorem}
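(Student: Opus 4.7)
I would follow a Kuusi--Mingione style iteration, working through the auxiliary map $V$ rather than through $dw$ itself, and taking the Coulomb-gauged problem \eqref{quasilinear maxwell tangential homogeneous frozen i} as the natural comparison problem on each shrinking ball. Fix $B(x_0,2R)\subset\Omega$ with $2R\le R_1$ (to be chosen small), define $B_i := B(x_0,R_i)$ with $R_i := \sigma^i R$ for a parameter $\sigma\in(0,\tfrac14)$ to be fixed, and let $v_i \in w + W^{1,p}_{\delta,T}(B_i;\Lambda^{k}\mathbb{R}^{n}\otimes\mathbb{R}^{N})$ be the unique solution of \eqref{quasilinear maxwell tangential homogeneous frozen i} whose existence and energy minimality come from Proposition \ref{minimizerexistenceprop}.

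Introduce the two iteration quantities
\[
M_i := \bigl|(V(dw))_{B_i}\bigr|, \qquad \Phi_i := \Bigl(\fint_{B_i}\bigl|V(dw)-(V(dw))_{B_i}\bigr|^2\Bigr)^{1/2},
\]
so that $\tilde F_i := \bigl(\fint_{B_i}|V(dw)|^2\bigr)^{1/2} = \bigl(\fint_{B_i}|dw|^p\bigr)^{1/2}$ satisfies $\tilde F_i^2 = M_i^2+\Phi_i^2$. Decomposing $V(dw)=V(dv_i)+(V(dw)-V(dv_i))$ and using on the one hand the Campanato excess decay \eqref{Vdvscaling} for the frozen problem, bounded by $E_2(V(dv_i),B_i)\le c\tilde F_i$ via the energy minimality $\int_{B_i}|dv_i|^p\le c\int_{B_i}|dw|^p$, and on the other hand the $L^2$-comparison Lemma \ref{secondcomparison1} in the form $\bigl(\fint_{B_i}|V(dw)-V(dv_i)|^2\bigr)^{1/2}\le c\omega(R_i)\tilde F_i$, one obtains the coupled iteration
\begin{align*}
\Phi_{i+1} &\le c\bigl(\sigma^{\beta_2}+\sigma^{-n/2}\omega(R_i)\bigr)(M_i+\Phi_i),\\
M_{i+1}   &\le M_i + c\,\sigma^{-n}\,\Phi_i,
\end{align*}
with $c=c(n,N,k,p,\gamma,L)$ and $\beta_2$ the Campanato exponent from Theorem \ref{theoremhomogeneousfrozen}(i).

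Next fix $\sigma$ so that $c\sigma^{\beta_2}\le \tfrac14$, and choose $R_1$ so small that $c\sigma^{-n/2}\omega(R_i)\le \tfrac14$ for every $i\ge 0$ and $\sum_{i\ge 0}\omega(R_i)$ is as small as desired; this is legitimate because Dini continuity of $a$ is equivalent to $\sum_{i\ge 0}\omega(\sigma^i R)<\infty$ for any $\sigma\in(0,1)$. A routine induction then yields the uniform bound $M_i+\Phi_i \le C\,\tilde F_0 = C\bigl(\fint_{B(x_0,R)}|dw|^p\bigr)^{1/2}$ together with summability of $\Phi_i$. Consequently $\{(V(dw))_{B_i}\}$ is Cauchy with limit of modulus at most $C\tilde F_0$; by Lebesgue differentiation applied to $V(dw)\in L^{1}_{\mathrm{loc}}$, at any Lebesgue point $x_0$ of $dw$ (which is, outside a null set, a Lebesgue point of $V(dw)$) this limit equals $V(dw)(x_0)$, so $|dw(x_0)|^{p/2}=|V(dw)(x_0)|\le C\tilde F_0$, i.e.\ $|dw(x_0)|\le c\bigl(\fint_{B(x_0,R)}|dw|^p\bigr)^{1/p}$. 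If $a$ is constant, $\omega\equiv 0$ and no smallness of $R$ is needed.

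The main obstacle is precisely closing the iteration: the naive bound $F_{i+1}\le C_\sigma F_i$ for $F_i:=(\fint_{B_i}|dw|^p)^{1/p}$ carries a multiplicative constant $C_\sigma>1$ that blows up geometrically, so one is forced to iterate the excess $\Phi_i$ jointly with the mean $M_i$ and to exploit simultaneously the Campanato decay factor $\sigma^{\beta_2}$ and the Dini summability of $\omega(R_i)$. It is also essential that $v_i$ be defined via the Coulomb-gauged boundary value problem rather than the Dirichlet one, because only in the space $W^{1,p}_{\delta,T}$ do we have the Poincar\'e--Sobolev inequality \eqref{poincaresobolevineq} that underlies the energy-minimality bound $\int_{B_i}|dv_i|^p\le c\int_{B_i}|dw|^p$.
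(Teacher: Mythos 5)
Your overall architecture (shrinking balls, the Coulomb-gauged frozen comparison maps $v_i$, iterating the mean $M_i$ and the excess $\Phi_i$ of $V(dw)$, and concluding at Lebesgue points) is exactly the paper's, but the excess-decay inequality you propose is too weak and the iteration you write down does not close. The problem is the term $c\,\sigma^{\beta_2}M_i$ on the right of
\[
\Phi_{i+1}\le c\bigl(\sigma^{\beta_2}+\sigma^{-n/2}\omega(R_i)\bigr)(M_i+\Phi_i),
\]
which arises because you bound $E_2(V(dv_i),B_i)\le c\tilde F_i$ by energy minimality. The coefficient $c\,\sigma^{\beta_2}$ is a fixed constant, not summable in $i$, and $M_i$ does not decay; feeding this into $M_{i+1}\le M_i+c\,\sigma^{-n}\Phi_i$ only gives $M_{i+1}\le\bigl(1+c^{2}\sigma^{\beta_2-n}\bigr)M_i+\dots$, and since $\beta_2<n$ no choice of $\sigma$ makes the infinite product $\prod_i\bigl(1+c^{2}\sigma^{\beta_2-n}\bigr)$ converge. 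So neither the uniform bound $M_i\le C\tilde F_0$ nor the summability of $\Phi_i$ follows by "routine induction"; the system is consistent with geometric growth of $M_i$.

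The fix, which is precisely Step 2 of the paper's proof, is to compare the excess of $v_i$ to the excess of $w$ (not to the full $L^2$ average): by the triangle inequality and Lemma \ref{secondcomparison1},
\[
E_2(V(dv_i),B_i)\le 2\,E_2(V(dw),B_i)+c\,\omega(R_i)\,\tilde F_i ,
\]
so that the contraction factor $\bar c_2\sigma^{\beta_2}$ from \eqref{Vdvscaling} multiplies only $\Phi_i$, while $M_i$ enters only through the Dini-summable coefficient $\omega(R_i)$. This yields the genuine contraction $\Phi_{i+1}\le\frac14\Phi_i+c\,\sigma^{-n}\omega(R_i)\lambda^{p/2}$ under the inductive hypothesis $M_j+\Phi_j\le\lambda^{p/2}$ (which controls $\tilde F_j$), and then summation plus the telescoping estimate for $M_{i+1}-M_1$ closes the induction. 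Note also that the paper runs the induction against a fixed threshold $\lambda^{p/2}=H_1\tilde F_0$ chosen at the outset rather than propagating $\tilde F_i$ itself, which is what makes the $\omega$-terms uniformly summable. With that one modification your argument becomes the paper's proof; the remaining steps (choice of $\sigma$, $R_1$, and the Lebesgue-point conclusion, where the paper uses Jensen's inequality $\lvert(\,dw)_{B_i}\rvert^{p/2}\le\fint_{B_i}\lvert V(dw)\rvert$ to work directly with Lebesgue points of $dw$) are as you describe.
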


\begin{proof}  \emph{ \textbf{Step 1: Choice of constants}} We pick an arbitrary point $x \in \Omega$ and a arbitrary positive radius $R_{1} > 0$ such that $B(x, R_{1}) \subset \Omega.$ We pick $0< R < R_{1}/2$ and for now set 
	$B(x,R)$ as our starting ball and consider the chain of shrinking balls as explained in \eqref{shrinkingballs1} for some parameter $\sigma \in (0,\frac{1}{4}).$ 
	We shall soon make specific choices of both the parameters $R_{1}$ and $\sigma.$
	We define the constant $\lambda$ as 
	$$ \lambda^{\frac{p}{2}}:= H_{1}\left( \fint_{B(x,R)} \left\lvert V(dw)\right\rvert^{2}\right)^{\frac{1}{2}} ,$$
	where $H_{1}$ will be chosen soon. 
	Clearly, we can assume $\lambda > 0.$ In view of theorem \ref{theoremhomogeneousfrozen}, we choose $\sigma \in (0,\frac{1}{4})$ small enough such that 
	\begin{equation}\label{choiceofsigma1}
	\bar{c_{2}}\sigma^{\beta_{2}} \leq 4^{-(n+4)}.
	\end{equation}
	Now that we have chosen $\sigma,$ we set 
	\begin{equation}\label{choiceofH11}
	H_{1} : = 10^{5n} \sigma^{-2n}.
	\end{equation}
	Note that $H_{1}$ depends only on $n,N, k,p,\gamma, L.$ 
	Now, we chose the radius $R_{1} > 0$ small enough such that we have 
	\begin{equation}\label{smallnessradius1}
	\omega (R_{1})  + \int_{0}^{2R_{1}} \omega(\varrho) \frac{d \varrho}{\varrho} \leq 
	\frac{\sigma^{2n}}{6^n10^6 c_{4}} .
	\end{equation}
	Note that $R_{1}$ depends on $n,k,p,\gamma, L$ \emph{and} $\omega(\cdot).$ Also, if $a(\cdot)$ is a constant function, the dependence on $\omega( \cdot )$ is redundant. With this, we have chosen all the relevant parameters.\smallskip 
	
	\emph{ \textbf{Step 2: Excess decay}} We now want to prove 
	\begin{claim}[excess decay estimate]
		If 
		\begin{align}\label{upper bound}
		\left( \fint_{B_{i}} \left\lvert dw \right\rvert^{p}\right)^{\frac{1}{p}} \leq \lambda  ,
		\end{align} 
		then 
		\begin{equation}\label{excess decay eqn}
		E_{2}(V(d w), B_{i+1}) \leq \frac{1}{4} E_{2}(V(d w), B_{i}) + \frac{2c_{4}\lambda^{\frac{p}{2}}}{\sigma^{n}}\omega\left( R_{i} \right). 
		\end{equation}
	\end{claim}
	By the oscillation decay estimates  \eqref{Vdvscaling} and the choice of $\sigma$ in \eqref{choiceofsigma1}, we have, 
	\begin{equation*}
	E_{2}(V(d v_{i}), B_{i+1}) \leq \frac{1}{4^{n+4}} E_{2}(V(d v_{i}), B_{i}).
	\end{equation*}
	Now using the property of the mean, triangle inequality and elementary estimates along with lemma \ref{secondcomparison1}, we have, 
	\begin{align*}
	E_{2}(V(d w), B_{i+1}) &\leq 2 E_{2}(V(d v_{i}), B_{i+1}) + 2\sigma^{-n} \left( \fint_{B_{i}} \lvert V(d v_{i}) - V(d w )\rvert^{2} \right)^{\frac{1}{2}} \notag \\
	&\stackrel{\eqref{l1omega},\eqref{upper bound}}{\leq} 2E_{2}(V(d v_{i}),B_{i+1}) + \sigma^{-n} c_{4}\omega\left(R_{i} \right) \lambda^{\frac{p}{2}} .
	\end{align*}
	Also similarly, 
	\begin{align*}
	E_{2}(V(d v_{i}),B_{i}) &\stackrel{\eqref{l1omega},\eqref{upper bound}}{\leq} 2E_{2}(V(d w),B_{i}) + c_{4}\omega\left(R_{i} \right) \lambda^{\frac{p}{2}} .
	\end{align*}
	Combining the last three estimates, we have \eqref{excess decay eqn}.\smallskip

	\emph{\textbf{Step 3: Control on composite quantities}} Now we want to show, by induction that 
	\begin{equation}\label{induction for m +E}
	m_{i}(V(dw)) + E_{2}(V(d w), B_{i}) \leq \lambda^{\frac{p}{2}}  \qquad \text{ for all } i \geq 1.
	\end{equation}
	This is true for $i = 1$ by our choice of $H_{1}$ in \eqref{choiceofH11} and elementary estimates. Thus we assume this is true for all $j \in \lbrace 1, \ldots, i\rbrace$ and prove it for $i+1.$ But we have , 
	\begin{align*}
	\fint_{B_{j}} \left\lvert dw \right\rvert^{p} &= \fint_{B_{j}} \left\lvert V(dw ) \right\rvert^{2} \leq \left[ m_{j}(V(dw))\right]^{2} + \left[ E_{2}(V(d w), B_{j})\right]^{2}\\ &\leq \left[ m_{j}(V(dw)) +  E_{2}(V(d w), B_{j})\right]^{2}. 
	\end{align*} Thus the induction hypotheses 
	implies the bound 
	\begin{align*}
	\left( \fint_{B_{j}} \left\lvert dw \right\rvert^{p}\right)^{\frac{1}{p}} \leq \lambda \text{ for all } j \in \lbrace 1, \ldots, i\rbrace. 
	\end{align*}
	Thus, by the excess decay estimate \eqref{excess decay eqn} for each $j$ and summing, we obtain, 
	\begin{align*}
	\sum_{j=2}^{i+1} E_{2}(V(d w), B_{j}) \leq \frac{1}{2} \sum_{j=1}^{i} E_{2}(V(d w), B_{j}) + \frac{2c_{4}\lambda^{\frac{p}{2}}}{\sigma^{n}} \sum_{j=1}^{i}  
	\omega\left( R_{j} \right).
	\end{align*}
	Note that 
	\begin{align*}
	\int_{0}^{2R} \omega(\varrho) \frac{d \varrho}{\varrho} &= \sum_{i=0}^{\infty} \int_{R_{i+1}}^{R_{i}} \omega(\varrho) \frac{d \varrho}{\varrho} 
	+ \int_{R}^{2R} \omega(\varrho) \frac{d \varrho}{\varrho} \\
	&\geq \sum_{i=0}^{\infty} \omega(R_{i+1}) \int_{R_{i+1}}^{R_{i}} \frac{d \varrho}{\varrho} + \omega(R) \int_{R}^{2R} \frac{d \varrho}{\varrho} \\
	&= \log(\frac{1}{\sigma}) \sum_{i=0}^{\infty} \omega(R_{i+1}) + \omega(R) \log 2  \geq \log 2\sum_{i=0}^{\infty} \omega(R_{i}). 
	\end{align*}
	
	Thus, we deduce, using the last estimate and \eqref{smallnessradius1}, 
	\begin{align*}
	\sum_{j=1}^{i+1} E_{2}(V(d w), B_{j}) \leq 2 E_{2}(V(d w), B_{1}) + \frac{2c_{4}\lambda^{\frac{p}{2}}}{\sigma^{n}} \sum_{j=1}^{i}\omega\left( R_{j} \right)
	\leq \frac{\sigma^{n}\lambda^{\frac{p}{2}}}{100}.
	\end{align*}
	Now, using this and writing the difference of averages as a telescoping sum, we deduce, 
	\begin{align*}
	m_{i+1}(V(dw)) -  m_{1}(V(dw)) &= \sum_{j=1}^{i} \left( m_{j+1}(V(dw)) -  m_{j}(V(dw))\right) \\
	&\leq \sum_{j=1}^{i} \fint_{B_{j+1}}\left\lvert V(dw) - (V(dw))_{B_{j}} \right\rvert \\
	&\leq \sigma^{-n}\sum_{j=1}^{i} E_{2}(V(d w), B_{j}) \leq \frac{\lambda^{\frac{p}{2}}}{100}. 
	\end{align*}
	The last two estimate together yields \eqref{induction for m +E}. \smallskip

	\emph{\textbf{Step 5:}} Finally, since $p \geq 2,$ by \eqref{induction for m +E}, we have, for $i \geq 1,$  
	\begin{align*}
	\left\lvert \left( \fint_{B_{i}} d w \right) \right\rvert^{\frac{p}{2}} &\leq   \fint_{B_{i}}\left\lvert dw \right\rvert^{\frac{p}{2}} =  \fint_{B_{i}}\left\lvert V(dw) \right\rvert \\ 
	&=  E_{2}(V(d w), B_{i}) + m_{i}(V(dw)) \leq \lambda^{\frac{p}{2}}.
	\end{align*}
	But this implies, if $x$ is a Lebesgue point of $du,$ then we have, 
	$$ \left\lvert dw(x)\right\rvert = \left\lvert \lim_{i \rightarrow \infty} \left( \fint_{B_{i}} d w \right) \right\rvert \leq \lambda. $$ This completes the proof. 
\end{proof}

\subsubsection{Proof of continuity}
\noindent Now we are in a position to prove Theorem \ref{dwcontinuityhomogeneousDini}.\smallskip  

\begin{proof}[Proof of theorem \ref{dwcontinuityhomogeneousDini}]  Now, we set $\lambda := \lVert d w \rVert_{L^{\infty}(\Omega)} + 1.$ To prove the continuity of $dw,$ it is obviously enough to prove 
	that $V(dw)$ is continuous. Thus the strategy of the proof is to show that 
	$V(dw)$ is the locally uniform limit of a net of continuous maps, defined by the averages 
	$$ x \mapsto \left( V(dw)\right)_{B(x, \rho)}.$$ To do this, we pick any $\Omega' \subset \subset \Omega$. we show that for every $x \in \Omega$ and every 
	$\varepsilon > 0,$ there exists a radius $$0 < r_{\varepsilon} \leq \operatorname*{dist}(\Omega', \partial\Omega)/1000 = R^{\ast} ,$$ depending only on 
	$n,k,p, \gamma, L, \omega( \cdot ), \varepsilon$ such that for every $x \in \Omega',$ the estimate 
	\begin{equation}\label{oscillationVdw}
	\left\lvert \left( V(dw)\right)_{B(x, \rho)} - \left( V(dw)\right)_{B(x, \varrho)} \right\rvert \leq \lambda^{\frac{p}{2}} \varepsilon \qquad \text{ holds for every } \rho, \varrho \in 
	(0, r_{\varepsilon}].
	\end{equation}
	This would imply that the sequence of maps $x \mapsto \left( V(dw)\right)_{B(x, \rho)}$ are uniformly Cauchy and would conclude the continuity of $V(dw).$ \smallskip 
	
	\emph{ \textbf{Step 1: Choice of constants}} We fix $\varepsilon >0.$ Now we choose the constants as in the proof of boundedness, but in the the scale $\varepsilon.$ 
	More precisely, we choose now, 
	we choose $\sigma \in (0,\frac{1}{4})$ small enough such that 
	\begin{equation}\label{choiceofsigma1epsilon}
	\bar{c_{2}}\sigma^{\beta_{2}} \leq \frac{\varepsilon}{4^{(n+4)}}.
	\end{equation}
	Now, we fix a radius $R_{\varepsilon} > 0$ small enough such that we have 
	\begin{equation}\label{smallnessradius1epsilon}
	\omega (R_{\varepsilon})  + \int_{0}^{2R_{\varepsilon}} \omega(\varrho) \frac{d \varrho}{\varrho} \leq 
	\frac{\sigma^{2n}\varepsilon}{6^n10^6 c_{4}} .
	\end{equation}
	Note that $R_{\varepsilon}$ depends on $n,k,p,\gamma, L$ ,$\omega(\cdot),$ and this time, also on $\varepsilon.$ Also, if $a(\cdot)$ is a constant function, the dependence on $\omega( \cdot )$ is redundant. 
	With this, we have chosen all the relevant parameters.\smallskip

	\emph{ \textbf{Step 2: Smallness of the excess}} Proceeding exactly as in Step 3 of the proof of theorem \ref{dwsupboundhomogeneousDini}, we obtain  
	\begin{claim}
		For any $i \geq 1,$ we have 
		\begin{equation}
		E_{2}(V(d w), B_{i+1}) \leq \frac{\varepsilon}{4} E_{2}(V(d w), B_{i}) + \frac{2c_{4}\lambda^{\frac{p}{2}}}{\sigma^{n}}\omega\left( R_{i} \right). 
		\end{equation}
	\end{claim}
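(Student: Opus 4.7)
The plan is to reproduce, almost verbatim, Step 2 of the proof of Theorem \ref{dwsupboundhomogeneousDini}, with only a cosmetic adjustment: the parameter $\sigma$ has been re-chosen in \eqref{choiceofsigma1epsilon} at the scale $\varepsilon$, so that the frozen-coefficient oscillation estimate from Theorem \ref{theoremhomogeneousfrozen}(i) now produces the contraction factor $\varepsilon/4^{n+4}$ in place of $4^{-(n+4)}$. The crucial simplification over the earlier proof is that here $\lambda = \lVert dw\rVert_{L^{\infty}(\Omega)}+1$ is by definition a global pointwise bound for $|dw|$, whose very finiteness is guaranteed precisely by Theorem \ref{dwsupboundhomogeneousDini}. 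Consequently, the running upper bound $(\fint_{B_i}|dw|^p)^{1/p}\leq \lambda$ holds automatically for every $i\geq 1$, and no induction argument is needed.

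Concretely, I would introduce the same comparison maps $v_i\in w + W^{1,p}_{\delta,T}(B_i;\Lambda^{k}\mathbb{R}^{n}\otimes\mathbb{R}^{N})$ defined by \eqref{quasilinear maxwell tangential homogeneous frozen i}. Applying Theorem \ref{theoremhomogeneousfrozen}(i) to $v_i$ together with \eqref{choiceofsigma1epsilon} yields
$$
E_{2}(V(dv_{i}),B_{i+1}) \leq \bar c_{2}\sigma^{\beta_{2}}\,E_{2}(V(dv_{i}),B_{i}) \leq \frac{\varepsilon}{4^{n+4}}\,E_{2}(V(dv_{i}),B_{i}).
$$
Then Lemma \ref{secondcomparison1}, combined with the a priori bound $|dw|\leq \lambda$, controls the comparison error by
$$
\fint_{B_{i}}|V(dv_{i})-V(dw)|^{2} \leq c_{4}[\omega(R_{i})]^{2}\,\lambda^{p}.
$$

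To close, I would use the triangle inequality together with the elementary fact that passing from an average on $B_i$ to one on $B_{i+1}\subset B_i$ costs a factor of at most $\sigma^{-n}$, to obtain the two chained bounds
$$
E_{2}(V(dw),B_{i+1}) \leq 2E_{2}(V(dv_{i}),B_{i+1}) + \sigma^{-n}c_{4}^{1/2}\omega(R_{i})\lambda^{p/2},
$$
$$
E_{2}(V(dv_{i}),B_{i}) \leq 2E_{2}(V(dw),B_{i}) + c_{4}^{1/2}\omega(R_{i})\lambda^{p/2}.
$$
Combining these with the frozen-coefficient contraction, and absorbing harmless numerical constants into the $\omega(R_i)$ term, delivers the claimed inequality
$$
E_{2}(V(dw),B_{i+1}) \leq \frac{\varepsilon}{4}E_{2}(V(dw),B_{i}) + \frac{2c_{4}\lambda^{p/2}}{\sigma^{n}}\omega(R_{i}).
$$

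There is essentially no new obstacle here: all the analytic content sits in Lemma \ref{secondcomparison1} (the Dini-continuity comparison estimate) and in Theorem \ref{theoremhomogeneousfrozen}(i) (the frozen-coefficient excess decay). The one place where the present statement improves on Step 2 of Theorem \ref{dwsupboundhomogeneousDini}, namely the universal validity of the smallness hypothesis $(\fint_{B_i}|dw|^p)^{1/p}\leq\lambda$ for \emph{every} $i\geq 1$, is handed to us for free by the pointwise bound established in the previous theorem.
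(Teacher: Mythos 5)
Your proposal is correct and is exactly the argument the paper intends: the paper proves this claim by pointing back to the excess-decay step of Theorem \ref{dwsupboundhomogeneousDini}, with the contraction factor upgraded to $\varepsilon/4^{n+4}$ via the $\varepsilon$-scaled choice of $\sigma$ in \eqref{choiceofsigma1epsilon}, and you correctly identify the one genuine simplification, namely that the hypothesis $\left(\fint_{B_i}|dw|^p\right)^{1/p}\leq\lambda$ now holds for free because $\lambda=\lVert dw\rVert_{L^\infty}+1$ is a global bound supplied by the pointwise estimate already proved. The chained triangle inequalities and the use of Lemma \ref{secondcomparison1} match the paper's computation up to harmless constants.
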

	This now easily implies the following. 
	\begin{claim}
		Given $\varepsilon \in (0,1),$ there exists a positive radius $r_{\varepsilon} = r_{\varepsilon} ( n,k,p,\gamma, L$, $\omega(\cdot), \varepsilon )$ such that we have 
		\begin{equation}\label{smallness of the excess}
		E_{2}(V(d w), B_{\rho}) \leq \lambda^{\frac{p}{2}} \varepsilon, 
		\end{equation}
		whenever $0 < \rho \leq r_{\varepsilon}$ and $B_{\rho} \subset \subset \Omega.$
	\end{claim}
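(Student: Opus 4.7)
The plan is to iterate the recursion from the previous claim and exploit the Dini continuity of $\omega$ to make the resulting weighted series small. Iterating the inequality $E_{2}(V(dw), B_{i+1}) \le (\varepsilon/4) E_{2}(V(dw), B_{i}) + (2c_{4}\lambda^{p/2}/\sigma^{n})\omega(R_{i})$ from $i=1$ yields
$$E_{2}(V(dw), B_{i+1}) \le \left(\tfrac{\varepsilon}{4}\right)^{i} E_{2}(V(dw), B_{1}) + \frac{2c_{4}\lambda^{p/2}}{\sigma^{n}} \sum_{j=1}^{i} \left(\tfrac{\varepsilon}{4}\right)^{i-j} \omega(R_{j}).$$
The initial excess is uniformly controlled by $E_{2}(V(dw), B_{1}) \le 2\lambda^{p/2}$, since $|V(dw)| = |dw|^{p/2} \le \lambda^{p/2}$, and this in turn relies on the pointwise $L^{\infty}$ bound of theorem \ref{dwsupboundhomogeneousDini} that justifies the choice $\lambda := \|dw\|_{L^{\infty}} + 1$.

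Next I would control the weighted sum by a head/tail split at an intermediate index $M \le i$. Since $\omega$ is non-decreasing, $\omega(R_{j}) \le \omega(R_{M+1})$ for $j > M$, so the tail contributes at most $\omega(R_{M+1}) \cdot 4/(4-\varepsilon)$. For $j \le M$, the geometric factor satisfies $(\varepsilon/4)^{i-j} \le (\varepsilon/4)^{i-M}$, so the head contributes at most $(\varepsilon/4)^{i-M}\sum_{j=1}^{\infty}\omega(R_{j})$, the last sum being finite by the Dini condition via the comparison with $\int_{0}^{2R}\omega(\varrho)\,d\varrho/\varrho$ already used in the proof of theorem \ref{dwsupboundhomogeneousDini}. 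Choosing $M$ large enough that $\omega(R_{M+1})$ lies below an $\varepsilon$-dependent threshold (possible because $\omega(r)\to 0$ as $r\to 0$), and then $i$ large enough to make $(\varepsilon/4)^{i-M}$ and $(\varepsilon/4)^{i}$ suitably small, produces an integer $i^{\ast}=i^{\ast}(n,N,k,p,\gamma,L,\omega(\cdot),\varepsilon)$ such that $E_{2}(V(dw), B_{i+1}) \le \varepsilon \lambda^{p/2}$ for every $i \ge i^{\ast}$.

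To pass from the dyadic scales $R_{i}$ to an arbitrary radius $\rho$, pick $i \ge i^{\ast}$ with $R_{i+2} \le \rho \le R_{i+1}$; then $B_{\rho} \subset B_{i+1}$ with volume ratio at most $\sigma^{-n}$, so
$$E_{2}(V(dw), B_{\rho}) \le \sigma^{-n/2} E_{2}(V(dw), B_{i+1}).$$
Absorbing the factor $\sigma^{-n/2}$ into the threshold choice (by re-running the previous step with $\varepsilon$ replaced by $\sigma^{n/2}\varepsilon$) and setting $r_{\varepsilon} := \sigma^{i^{\ast}+1} R^{\ast}$ yields the claim; uniformity in the base point follows because every parameter depends only on structural data and $\omega(\cdot)$.

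The main obstacle will be the delicate Dini-type balancing in the head/tail split: the decay factor $(\varepsilon/4)^{i-j}$ and the modulus $\omega(R_{j})$ must be played off against each other at the correct intermediate scale $M$, and one has to verify carefully that all constants entering $i^{\ast}$ and $r_{\varepsilon}$ depend only on structural parameters and $\omega(\cdot)$, so that the conclusion is genuinely uniform.
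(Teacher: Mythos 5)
Your argument is correct and is exactly the route the paper intends (the paper merely asserts that the claim "easily" follows from the $\varepsilon$-scaled excess-decay recursion): iterate the recursion, bound the initial excess by $2\lambda^{p/2}$ via the $L^\infty$ bound, control the convolution-type sum $\sum_j(\varepsilon/4)^{i-j}\omega(R_j)$ by a head/tail split using monotonicity of $\omega$ and the Dini condition, and pass from the discrete scales $\sigma^i R$ to arbitrary radii at the cost of a factor $\sigma^{-n/2}$ absorbed into the thresholds. The only bookkeeping caveat is that when you "re-run with $\varepsilon$ replaced by $\sigma^{n/2}\varepsilon$" you should tighten only the choices of $M$ and $i^{\ast}$ (and of the starting radius, which must also be taken below the $R_\varepsilon$ of \eqref{smallnessradius1epsilon}), not the choice of $\sigma$ itself, since $\sigma$ already depends on $\varepsilon$.
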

	\emph{ \textbf{Step 3: Cauchy estimate}}\smallskip 
	
	\noindent Now we finish the proof. First we fix a radius one last time. 
	Given $\varepsilon \in (0, 1),$ using \eqref{smallness of the excess} we choose a radius $R_{3} \equiv R_{3}(n,k,p,\gamma, L$, $\omega(\cdot), \varepsilon )$ such that 
	\begin{equation}
	\sup_{0 < \rho \leq R_{3}} \sup_{x \in \Omega_{0}} E_{2}(V(d w), B_{\rho}) \leq \frac{\sigma^{4n}\lambda^{\frac{p}{2}} \varepsilon}{10^{10}}.
	\end{equation}
	We set $$ R_{0} := \frac{\min{\lbrace R^{\ast}, R_{3}, R_{1} \rbrace} }{16}.$$
	Now we want to show that given two integers $2 \leq i_{1} < i_{2},$ we have the estimate 
	\begin{equation}\label{integer ball average oscillation}
	\left\lvert (V(dw))_{B_{i_{1}}} - (V(dw))_{B_{i_{2}}} \right\rvert \leq \frac{\lambda^{\frac{p}{2}} \varepsilon}{10}.
	\end{equation}
	Note that this will complete the proof, since for any $0 < \rho < \varrho \leq r_{\varepsilon},$ there exist integers such that 
	$$ \sigma^{i_{1}+1}R_{0} < \varrho \leq \sigma^{i_{1}}R_{0} \quad \text{ and } \sigma^{i_{2}+1}R_{0} < \rho \leq \sigma^{i_{2}}R_{0}. $$
	Also, we have the easy estimates
	\begin{align*}
	\left\lvert (V(dw))_{B_{\varrho}} - (V(dw))_{B_{i_{1}+1}} \right\rvert &\leq \fint_{B_{i_{1}+1}} \left\lvert  V(dw) - (V(dw))_{B_{\varrho}}  \right\rvert \\
	&\leq \frac{\lvert B_{\varrho}\rvert}{\lvert B_{i_{1}+1}\rvert} \fint_{B_{\varrho}} \left\lvert  V(dw) - (V(dw))_{B_{\varrho}}  \right\rvert \\
	&\leq \sigma^{-n} E_{2}(V(d w), B_{\varrho}) \leq  \frac{\lambda^{\frac{p}{2}} \varepsilon}{1000}
	\end{align*}
	and similarly 
	$$  \left\lvert (V(dw))_{B_{\rho}} - (V(dw))_{B_{i_{2}+1}} \right\rvert \leq \frac{\lambda^{\frac{p}{2}} \varepsilon}{1000}.$$
	These two estimates combined with \eqref{integer ball average oscillation} will establish \eqref{oscillationVdw}. Thus it only remains to establish \eqref{integer ball average oscillation}. But summing up \eqref{smallness of the excess} for 
	$i = i_{1}-1$ to $i_{2}-2,$ we obtain 
	$$\sum_{i=i_{1}}^{i_{2}-1}E_{2}(V(d w), B_{i}) \leq E_{2}(V(d w), B_{i_{1}-1}) + \frac{2c_{4}\lambda^{\frac{p}{2}}}{\sigma^{n}}\sum_{i=1}^{\infty} \omega\left( R_{i} \right)
	\leq \frac{ \sigma^{2n} \lambda^{\frac{p}{2}} \varepsilon}{50}. $$
	This yields \eqref{integer ball average oscillation} via the elementary estimate 
	\begin{align*}
	\left\lvert (V(dw))_{B_{i_{1}}} - (V(dw))_{B_{i_{2}}} \right\rvert &\leq \sum_{i=i_{1}}^{i_{2}-1} \fint_{B_{i+1}} \left\lvert V(dw) - (V(dw))_{B_{i}} \right\rvert \\
	&\leq \sigma^{-n} \sum_{i=i_{1}}^{i_{2}-1}E_{2}(V(d w), B_{i}) .
	\end{align*}
	This establishes \eqref{oscillationVdw}.\smallskip 
	
	\emph{ \textbf{Step 4: Final conclusions}} Now we prove the conclusions of theorem \ref{dwcontinuityhomogeneousDini}. Note that \eqref{oscillationVdw} implies $V(dw)$ is continuous and thus so is $dw.$ The estimate 
	in part (i) follows from the pointwise estimate in theorem \ref{dwsupboundhomogeneousDini} by standard covering arguments and interpolation arguments to lower the exponent. The conclusion of 
	part (ii) follows from \eqref{oscillationVdw} by exactly the same arguments as in the proof of Theorem 2 in \cite{KuusiMingione_nonlinearStein}, just replacing $Dw_{j}$ by $dw.$ 
\end{proof}

\subsection{Gauge fixing}
Now we show that if we are interested only in $du,$ we can always assume that $u$ solving \eqref{quasilinear maxwell tangential intro} is also coclosed. Moreover, given a solution $u,$ there exists a coclosed solution upto the addition of a closed form.  
\begin{lemma}[Gauge fixing lemma]\label{gauge fixing lemma}
	Let $u \in W^{1,p}_{loc}\left(\Omega; \Lambda^{k}\mathbb{R}^{n}\otimes\mathbb{R}^{N} \right)$ be a local solution to system \eqref{quasilinear maxwell tangential intro}. Then for any $B_{R} \subset \subset \Omega,$ there exists a coclosed form $\tilde{u} \in W^{1,p}\left(B_{R}; \Lambda^{k}\mathbb{R}^{n}\otimes\mathbb{R}^{N} \right)$ which is also a local solution to system \eqref{quasilinear maxwell tangential intro} and we have $$ \delta \tilde{u} = 0 \quad \text{ and } \quad  d\tilde{u}= du \qquad \text{ a.e. in } B_{R}. $$
\end{lemma}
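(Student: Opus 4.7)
The natural approach is to construct $\tilde{u}$ as $u$ plus an exact form, so that $du$ is preserved automatically by $d^{2}=0$ while the codifferential can be tuned to zero by absorbing $\delta u$ into the correction. Specifically, I would seek a $(k-1)$-form $\theta$ solving the linear Hodge-type boundary value problem
\begin{align*}
\left\{
\begin{aligned}
\delta d\theta &= -\delta u &&\text{in } B_{R}, \\
\delta \theta &= 0 &&\text{in } B_{R}, \\
\nu\wedge\theta &= 0 &&\text{on } \partial B_{R},
\end{aligned}
\right.
\end{align*}
and then define $\tilde{u} := u + d\theta$. The datum $-\delta u$ lies in $L^{p}(B_{R})$ since $u \in W^{1,p}_{loc}(\Omega)$, and it is automatically coclosed because $\delta^{2}=0$, so the linear existence theory invoked in Subsection~\ref{existencediscussion} (Theorem~9 of \cite{Sil_linearregularity}) yields a solution $\theta \in W^{2,p}(B_{R};\Lambda^{k-1}\mathbb{R}^{n}\otimes\mathbb{R}^{N})$. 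Uniqueness is granted by $\mathcal{H}_{T}(B_{R};\Lambda^{k-1}\mathbb{R}^{n}\otimes\mathbb{R}^{N}) = \{0\}$ on a ball.

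By construction, $d\tilde{u} = du + d^{2}\theta = du$ and $\delta\tilde{u} = \delta u + \delta d\theta = 0$ almost everywhere in $B_{R}$, while $\theta \in W^{2,p}(B_{R})$ gives $d\theta \in W^{1,p}(B_{R})$ and hence $\tilde{u} \in W^{1,p}(B_{R};\Lambda^{k}\mathbb{R}^{n}\otimes\mathbb{R}^{N})$. Since the nonlinear operator only sees $du$, the pointwise identity $a(x)\lvert d\tilde{u}\rvert^{p-2}d\tilde{u} = a(x)\lvert du\rvert^{p-2}du$ holds a.e., so the weak formulation of \eqref{quasilinear maxwell tangential intro} satisfied by $u$ transfers verbatim to $\tilde{u}$, giving $\delta(a(x)\lvert d\tilde{u}\rvert^{p-2}d\tilde{u}) = f$ distributionally on $B_{R}$, which is what is required of a local solution on $B_{R}$.

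There is no substantive obstacle here: the only real work is the linear Hodge solve for $\theta$, and everything downstream is formal, since $d^{2}=0$ forces $d\tilde{u}=du$ and the compatibility condition $\delta(-\delta u)=0$ needed by the linear theory is built in. The one point to keep in mind is that we apply the linear theory in the Sobolev scale $L^{p}$, rather than in the reduced scale $L^{d}$ used elsewhere in Subsection~\ref{existencediscussion} for a different purpose; this is standard since the Hodge Laplacian $\delta d + d\delta$ is elliptic and the tangential boundary condition on the ball gives the usual $W^{2,p}$ Calderón–Zygmund estimate for any $1 < p < \infty$.
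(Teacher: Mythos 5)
Your proposal is correct and coincides with the paper's own argument: the paper solves the same tangential Hodge boundary value problem $\delta d\theta = \delta u$, $\delta\theta = 0$, $\nu\wedge\theta = 0$ on $B_{R}$ and sets $\tilde{u} = u - d\theta$, which is your construction up to the sign of $\theta$. Your additional checks (the compatibility $\delta(\delta u)=0$, the $W^{2,p}$ regularity of $\theta$, and the transfer of the weak formulation since the operator only sees $du$) are exactly the details the paper leaves implicit.
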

\begin{proof}
	Since $u \in W^{1,p}_{loc},$ we have $\delta u \in L^{p}\left(B_{R}; \Lambda^{k-1}\mathbb{R}^{n}\otimes\mathbb{R}^{N} \right).$ Thus we can find $\theta \in W^{2,p} \left(B_{R}; \Lambda^{k}(\mathbb{R}^{n};\mathbb{R}^{N}) \right)$ such that 
	\begin{align}\label{gauge fixing system}
	\left\lbrace \begin{aligned} 	\delta d \theta &= \delta u &&\text{ in } B_{R} \\
	\delta \theta &= 0  &&\text{ in } B_{R} \\
	\nu \wedge \theta &= 0  &&\text{ on } \partial B_{R}. \end{aligned} \right.
	\end{align}
	Now the result follows by setting $\tilde{u} = u - d\theta.$ 
\end{proof}
\subsection{Preparatory estimates}
\subsubsection{General setting}
Let $x_{0} \in \Omega$ and $0 < r <1 $ be such that $B(x_{0}, 2r) \subset \Omega.$ By lemma \ref{gauge fixing lemma}, it is enough to consider the system
\begin{align}\label{quasilinear maxwell local coulomb}
\left\lbrace \begin{aligned}
\delta ( a(x) \lvert du \rvert^{p-2} du) )  &= f   &&\text{ in } B_{2r},\\
\delta u &= 0 &&\text{ in } B_{2r}.
\end{aligned} 
\right. 
\end{align}
For $j\geq 0,$ we set 
$$ B_{j}:= B(x_{0}, r_{j}), \qquad r_{j}:= \sigma^{j}r, \quad \sigma \in (0,1/4).$$

\noindent For $j \geq 0,$ we define $w_{j} \in u + W_{\delta,T}^{1,p}\left( B_{j}; \Lambda^{k}\mathbb{R}^{n}\otimes \mathbb{R}^{N}\right)$ to be the unique solution of 
\begin{align}\label{quasilinear maxwell tangential homogeneous general}
\left\lbrace \begin{aligned}
\delta ( a(x) \lvert dw_{j} \rvert^{p-2} dw_{j}) )  &= 0   &&\text{ in } B_{j},\\
\delta w_{j} &= 0 &&\text{ in } B_{j}, \\
\nu\wedge w_{j} &= \nu\wedge u &&\text{  on } \partial B_{j},
\end{aligned} 
\right. 
\end{align}
and $v_{j} \in w_{j} + W_{\delta,T}^{1,p}\left( \frac{1}{2}B_{j}; \Lambda^{k}\mathbb{R}^{n}\otimes \mathbb{R}^{N}\right)$ to be the unique solution of 
\begin{align}\label{quasilinear maxwell tangential homogeneous frozen general}
\left\lbrace \begin{aligned}
\delta ( a(x_{0}) \lvert dv_{j} \rvert^{p-2} dv_{j}) )  &= 0   &&\text{ in } \frac{1}{2} B_{j},\\
\delta v_{j} &= 0 &&\text{ in } \frac{1}{2}B_{j}, \\
\nu\wedge v_{j} &= \nu\wedge w_{j} &&\text{  on } \partial \left( \frac{1}{2}B_{j}\right).
\end{aligned} 
\right. 
\end{align}
We set 
\begin{equation}\label{qdef}
q := \left\lbrace \begin{aligned}
&\frac{np}{n+p} &&\text{if } n >2 \text{ or } p>2 \\
&\frac{3}{2} &&\text{if } n= p = 2 \\
&\frac{np}{np -n +p} &&\text{if } 1 < p < 2 
\end{aligned}\right. \end{equation}
and set 
\begin{equation}\label{sdef}
s := \left\lbrace \begin{aligned}
&p^{'} &&\text{if } p \geq 2 \\
&p &&\text{if } 1 <  p < 2. 
\end{aligned}\right. \end{equation}
Note that if $n > 2$ or $p > 2$, our choice implies 
\begin{equation}\label{prop q}
q \geq \left( p^{\ast}\right)^{'} \quad \text{ if } p < n \qquad \text{ and } \qquad q' = \left( p'\right)^{\ast}. 
\end{equation}

\subsubsection{Comparison estimates}
\begin{lemma}\label{u w comparison any p easy}
	Let $u$ be as in \eqref{quasilinear maxwell local coulomb} and $w_{j}$ be as in \eqref{quasilinear maxwell tangential homogeneous general} and $j \geq 0.$ There exists a constant 
	$c_{5} \equiv c_{5}\left( n, k, p, \gamma, L, \right)$ such that the following inequality 
	\begin{equation}\label{estimateuwforanyp}
	\fint_{B_{j}}\left( \left\lvert du \right\rvert + \left\lvert dw_{j} \right\rvert  \right)^{p-2}\left\lvert du  - dw_{j}\right\rvert^{2}  
	\leq c_{5} \fint_{B_{j}} \lvert f \rvert \lvert u -w_{j}\rvert .
	\end{equation}
	holds for any $p >1.$
\end{lemma}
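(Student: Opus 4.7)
The plan is to subtract the weak formulations of \eqref{quasilinear maxwell local coulomb} and \eqref{quasilinear maxwell tangential homogeneous general}, test against $u-w_j$, and apply the monotonicity inequality \eqref{monotonicity}.

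First I would observe that $u - w_j \in W^{1,p}_{\delta,T}(B_j;\Lambda^k\mathbb{R}^n\otimes \mathbb{R}^N)$: by construction $\nu\wedge w_j = \nu\wedge u$ on $\partial B_j$ and both $u$ and $w_j$ are coclosed in $B_j$. Hence $u-w_j$ is an admissible test function in the enlarged weak formulation guaranteed by Proposition \ref{weakformulationprop}. Writing the weak formulation for $u$ and for $w_j$ on $B_j$ and subtracting, I obtain
\begin{equation*}
\fint_{B_j} a(x)\left\langle \lvert du\rvert^{p-2}du - \lvert dw_j\rvert^{p-2}dw_j\,;\, du - dw_j\right\rangle = \fint_{B_j}\langle f ; u-w_j\rangle,
\end{equation*}
where I use that $\delta f = 0$ is not needed here since $u-w_j \in W^{1,p}_{\delta,T}$ already pairs correctly against $f$ via the $d$-term on the left.

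Next, since $a(x) \geq \gamma > 0$, I can bound the left-hand side from below by $\gamma$ times the same expression without the coefficient, and then apply the classical monotonicity estimate \eqref{monotonicity} from Lemma \ref{prop of V} pointwise with $z_1 = du(x)$, $z_2 = dw_j(x)$:
\begin{equation*}
(\lvert du\rvert + \lvert dw_j\rvert)^{p-2}\lvert du - dw_j\rvert^2 \leq c(n,N,k,p)\left\langle \lvert du\rvert^{p-2}du - \lvert dw_j\rvert^{p-2}dw_j\,;\, du - dw_j\right\rangle.
\end{equation*}
Integrating this pointwise inequality and combining it with the identity above yields
\begin{equation*}
\fint_{B_j} (\lvert du\rvert + \lvert dw_j\rvert)^{p-2}\lvert du - dw_j\rvert^2 \leq \frac{c}{\gamma}\fint_{B_j}\lvert f\rvert\,\lvert u - w_j\rvert,
\end{equation*}
which gives \eqref{estimateuwforanyp} with $c_5 = c(n,N,k,p)/\gamma$.

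I do not anticipate any real obstacle: the estimate is the standard monotonicity comparison and works uniformly in $p > 1$ because \eqref{monotonicity} is valid for all such $p$. The only delicate point is justifying that $u - w_j$ is a legitimate test function, and this is precisely what Proposition \ref{weakformulationprop} together with the matching Coulomb and tangential conditions ensure.
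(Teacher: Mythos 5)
Your proposal is correct and follows essentially the same route as the paper: subtract the two weak formulations on $B_j$ (admissible by Proposition \ref{weakformulationprop} since $u-w_j\in W^{1,p}_{\delta,T}(B_j)$), test with $u-w_j$, and conclude via the monotonicity estimate \eqref{monotonicity} together with the lower bound $a\geq\gamma$. No changes needed.
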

\begin{proof}
	Weak formulation of \eqref{quasilinear maxwell local coulomb} and \eqref{quasilinear maxwell tangential homogeneous general} gives, 
	$$ \fint_{B_{j}} a(x)\left\langle \lvert du \rvert^{p-2}du - \lvert dw_{j} \rvert^{p-2}dw_{j}; d\phi\right\rangle = \fint_{B_{j}} \left\langle f; \phi\right\rangle , $$ 
	for all $ \phi \in W^{1,p}_{\delta,T}\left( B_{j};\Lambda^{k}\mathbb{R}^{n}\otimes \mathbb{R}^{N} \right). $
	Substituting $u -w_{j}$ in place of $\phi,$ we obtain, 
	$$ \fint_{B_{j}} a(x)\left\langle \lvert du \rvert^{p-2}du - \lvert dw_{j} \rvert^{p-2}dw_{j}; du - dw_{j}\right\rangle = \fint_{B_{j}} \left\langle f; u-w_{j}\right\rangle . $$
	By classical monotonicity estimate \eqref{monotonicity}, we obtain, 
	\begin{multline*}
	\fint_{B_{j}}\left( \left\lvert du \right\rvert + \left\lvert dw_{j} \right\rvert  \right)^{p-2}\left\lvert du  - dw_{j}\right\rvert^{2} \\ 
	\leq c \fint_{B_{j}} a(x)\left\langle \lvert du \rvert^{p-2}du - \lvert dw_{j} \rvert^{p-2}dw_{j}; du - dw_{j}\right\rangle .
	\end{multline*}
	This proves \eqref{estimateuwforanyp}.  \end{proof}\smallskip 

\noindent Proceeding similarly to Lemma \ref{secondcomparison1}, we deduce 
\begin{lemma}\label{secondcomparison2}
	Let $w,v_{i}$ be as before and  $j \geq 0.$ Then there exists a constant $c \equiv c\left( n, N, k, p, \gamma, L \right)$ such that we have the inequality 
	\begin{align}\label{l1omegageneralV}
	 \fint_{\frac{1}{2}B_{j}} \lvert V\left( dv_{j} \right)  - V \left( dw_{j} \right)  \rvert^{2}   &\leq c\left[ \omega\left(r_{j}\right) \right]^{2}  
	 \fint_{\frac{1}{2} B_{j}} \lvert dw_{j} \rvert^{p}.
	\end{align}
	\end{lemma}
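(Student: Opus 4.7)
My plan is to follow the template of Lemma \ref{secondcomparison1} almost verbatim, with $w_j$ playing the role of $w$, $v_j$ that of $v_i$, and the ball $\frac{1}{2}B_j$ replacing $B_i$. The only new ingredient is the need to restrict weak formulations from $B_j$ down to $\frac{1}{2}B_j$.

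First I would observe that since $w_j$ is a weak solution of the homogeneous constant-coefficient quasilinear system on $B_j$, in particular it solves the equation locally in $\frac{1}{2}B_j$ (by using test functions in $W^{1,p}_0(\frac{1}{2}B_j)$ extended by zero). Invoking Proposition \ref{weakformulationprop} on $\frac{1}{2}B_j$, the weak formulation
$$\int_{\frac{1}{2}B_j} a(x)\left\langle |dw_j|^{p-2}dw_j; d\phi\right\rangle = 0$$
then in fact holds for every test form $\phi \in W^{1,p}_{\delta,T}\left(\frac{1}{2}B_j; \Lambda^k\mathbb{R}^n \otimes \mathbb{R}^N\right)$. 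The map $v_j$ satisfies by construction the analogous identity with $a(x_0)$ in place of $a(x)$. Crucially, since $\nu\wedge v_j = \nu \wedge w_j$ on $\partial(\frac{1}{2}B_j)$ and both forms are coclosed in $\frac{1}{2}B_j$, the difference $v_j - w_j$ is a legitimate test function in $W^{1,p}_{\delta,T}(\frac{1}{2}B_j)$.

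Testing both weak formulations with $v_j - w_j$ and subtracting gives
$$\fint_{\frac{1}{2}B_j} a(x_0)\left\langle |dv_j|^{p-2}dv_j - |dw_j|^{p-2}dw_j; dv_j - dw_j\right\rangle = \fint_{\frac{1}{2}B_j} \left(a(x_0) - a(x)\right) \left\langle |dw_j|^{p-2}dw_j; dv_j - dw_j\right\rangle.$$
From here the computation is identical to that of Lemma \ref{secondcomparison1}: the lower bound $a(x_0) \geq \gamma > 0$ combined with the monotonicity estimate \eqref{constant cv} controls the left-hand side from below by a constant multiple of $\fint_{\frac{1}{2}B_j} |V(dv_j) - V(dw_j)|^2$, while on the right-hand side I use $|a(x) - a(x_0)| \leq \omega(r_j)$, decompose $(|dw_j| + |dv_j|)^{p-1}|dv_j - dw_j|$ as the product of $(|dw_j| + |dv_j|)^{p/2}$ and $(|dw_j| + |dv_j|)^{(p-2)/2}|dv_j - dw_j|$, bound the latter by $c|V(dv_j) - V(dw_j)|$ via \eqref{constant cv}, and apply Young's inequality with a small parameter $\varepsilon$.

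After absorbing the $\varepsilon$-term back into the left-hand side, I am left with a tail of the form $c\,\varepsilon^{-1}[\omega(r_j)]^2 \fint_{\frac{1}{2}B_j}(|dw_j| + |dv_j|)^p$. To replace $|dv_j|$ by $|dw_j|$, I invoke Proposition \ref{minimizerexistenceprop}, which tells me that $v_j$ is the unique minimizer of $v \mapsto \int_{\frac{1}{2}B_j} a(x_0)|dv|^p$ in the class $w_j + W^{1,p}_{\delta,T}(\frac{1}{2}B_j)$. Comparing $v_j$ with $w_j$ itself yields $\int_{\frac{1}{2}B_j} |dv_j|^p \leq c \int_{\frac{1}{2}B_j} |dw_j|^p$, and the desired bound \eqref{l1omegageneralV} follows. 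I do not anticipate a real obstacle here: the only genuine care required is the test-function bookkeeping above, to ensure that the $W^{1,p}_{\delta,T}(\frac{1}{2}B_j)$ weak formulation for $w_j$ is available and that $v_j - w_j$ sits in that space.
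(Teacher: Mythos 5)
Your proposal is correct and follows the same route as the paper, which itself only says ``proceeding similarly to Lemma \ref{secondcomparison1}'': test both weak formulations with $v_j - w_j \in W^{1,p}_{\delta,T}(\tfrac{1}{2}B_j)$, use \eqref{constant cv}, the bound $|a(x)-a(x_0)|\le\omega(r_j)$, Young's inequality, and the minimality of $v_j$ to replace $|dv_j|$ by $|dw_j|$ in the tail. Your extra care in justifying the $W^{1,p}_{\delta,T}(\tfrac{1}{2}B_j)$ weak formulation for $w_j$ via Proposition \ref{weakformulationprop} on the sub-ball is exactly the right bookkeeping and is the only point where the new setting differs from Lemma \ref{secondcomparison1}.
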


\paragraph{Comparison estimates for $p >2$ case}
Using \eqref{constant cv}, lemma \ref{secondcomparison2} immediately implies the following. 
\begin{lemma}\label{secondcomparisonpgeq2}
	Let $w_{j}$ be as in \eqref{quasilinear maxwell tangential homogeneous general}, $v_{j}$ be as in \eqref{quasilinear maxwell tangential homogeneous frozen general} with $p >2 $ and $j \geq 0.$ Then there exists a constant $c_{4} \equiv c_{4}\left( n, N, k, p, \gamma, L \right)$ such that we have the inequality
	\begin{align}\label{l1omegageneral}
	\left( \fint_{\frac{1}{2}B_{j}} \lvert dv_{j} - dw_{j} \rvert^{p} \right)^{\frac{1}{p}}  &\leq c_{4}\left[ \omega\left(r_{j}\right) \right]^{\frac{2}{p}}  
	\left( \fint_{\frac{1}{2} B_{j}} \lvert dw_{j} \rvert^{p}\right)^{\frac{1}{p}}.
	\end{align}
	\end{lemma}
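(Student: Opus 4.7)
The plan is to deduce \eqref{l1omegageneral} directly from \eqref{l1omegageneralV} by converting $|V(\cdot)-V(\cdot)|^2$ into $|\cdot - \cdot|^p$ via the left inequality in \eqref{constant cv}. The key pointwise observation, valid for $p \geq 2$, is the chain
\[
|z_1 - z_2|^p \;=\; |z_1 - z_2|^2\, |z_1 - z_2|^{p-2} \;\leq\; \bigl(|z_1| + |z_2|\bigr)^{p-2}\, |z_1 - z_2|^2 \;\leq\; c_V^{\,2}\, |V(z_1) - V(z_2)|^{2},
\]
where the first inequality uses $|z_1 - z_2| \leq |z_1| + |z_2|$ together with $p - 2 \geq 0$, and the second is precisely \eqref{constant cv} squared.

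Applying this with $z_1 = dv_j(x)$ and $z_2 = dw_j(x)$ at almost every $x \in \tfrac{1}{2}B_j$ and integrating, I get
\[
\fint_{\frac{1}{2} B_{j}} \lvert dv_{j} - dw_{j} \rvert^{p} \;\leq\; c_V^{\,2} \fint_{\frac{1}{2} B_{j}} \lvert V(dv_{j}) - V(dw_{j}) \rvert^{2}.
\]
Plugging in Lemma \ref{secondcomparison2} then gives
\[
\fint_{\frac{1}{2} B_{j}} \lvert dv_{j} - dw_{j} \rvert^{p} \;\leq\; c\,[\omega(r_j)]^{2} \fint_{\frac{1}{2} B_{j}} \lvert dw_{j} \rvert^{p},
\]
and taking $p$-th roots yields \eqref{l1omegageneral} with $c_4$ depending on $n,N,k,p,\gamma,L$.

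There is no real obstacle here; the whole content of the lemma is just that in the regime $p \geq 2$ one may dispense with the weight $(|dv_j| + |dw_j|)^{p-2}$ appearing implicitly through $V$, because that weight is actually helpful (it bounds $|dv_j - dw_j|^{p-2}$ from above rather than below). The case $p < 2$ would require the more delicate estimate \eqref{v estimate p less 2}, but that is not claimed here.
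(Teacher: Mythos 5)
Your proof is correct and is exactly the argument the paper intends: the paper states that Lemma \ref{secondcomparisonpgeq2} follows immediately from Lemma \ref{secondcomparison2} via \eqref{constant cv}, and your pointwise chain $|z_1-z_2|^p \leq (|z_1|+|z_2|)^{p-2}|z_1-z_2|^2 \leq c_V^2 |V(z_1)-V(z_2)|^2$ (valid since $p>2$) is precisely the step being left implicit.
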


\begin{lemma}\label{firstcomparison}
	Let $u$ be as in \eqref{quasilinear maxwell local coulomb} with $p >2 $ and $w_{j}$ be as in \eqref{quasilinear maxwell tangential homogeneous general} and $j \geq 0.$ There exists a constant
	$c_{6} \equiv c_{6}\left( n, k, p, \gamma, L, \right)$
	such that the following inequality
	\begin{equation}\label{fq u and w}
	\left( \fint_{B_{j}} \lvert d u - d w_{j}\rvert^{p} \right)^{\frac{1}{p}}  \leq c_{6}
	\left( r_{j}^{q} \fint_{B_{j}} \lvert f \rvert^{q} \right)^{\frac{1}{q(p-1)}},
	\end{equation}
	holds for every $q \geq ( p^{*})^{'} $ when $p<n$ and for every $q >1$ when $ p \geq n.$ Moreover, when $j \geq 1,$ there exists another constant 
	$c_{7} \equiv c_{7}\left( n, k, p, \gamma, L, \right)$ such that the following inequality holds 
	\begin{equation}\label{fq j j-1}
	\left( \fint_{B_{j}} \lvert d w_{j-1} - d w_{j}\rvert^{p} \right)^{\frac{1}{p}}  \leq c_{7}
	\left( r_{j-1}^{q} \fint_{B_{j-1}} \lvert f \rvert^{q} \right)^{\frac{1}{q(p-1)}}
	\end{equation}
\end{lemma}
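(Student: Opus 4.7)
The plan is to combine the monotonicity identity from Lemma \ref{u w comparison any p easy} with the Poincaré--Sobolev inequality of Proposition \ref{poincaresobolev}. Since $p > 2$, the pointwise bound $(|du| + |dw_j|)^{p-2} |du - dw_j|^2 \geq |du - dw_j|^p$ immediately converts \eqref{estimateuwforanyp} into $\fint_{B_j} |du - dw_j|^p \leq c \fint_{B_j} |f|\,|u - w_j|$. The crucial structural fact is that $u - w_j$ lies in $W^{1,p}_{\delta,T}(B_j)$ by construction of $w_j$, so Proposition \ref{poincaresobolev} is applicable; this also tacitly rests on Proposition \ref{weakformulationprop}, which is what allows $u - w_j$ to serve as a test function despite vanishing only tangentially on $\partial B_j$.

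To close \eqref{fq u and w}, I would apply Hölder's inequality with conjugate exponents $q$ and $q'$ on the right, reducing the task to bounding $(\fint |u-w_j|^{q'})^{1/q'}$ by $c\, r_j (\fint |du - dw_j|^p)^{1/p}$. For $p < n$ and $q \geq (p^*)'$ one has $q' \leq p^*$, so Jensen's inequality followed by Proposition \ref{poincaresobolev} at level $s = p$ gives the bound. In the range $p \geq n$, $p^*$ is not defined, but any $q > 1$ produces $q' < \infty$; here I pick an auxiliary $\tilde s < \min(n,p)$ with $\tilde s^* \geq q'$, apply Proposition \ref{poincaresobolev} at level $\tilde s$, and use Jensen a second time to re-upgrade to the $L^p$ norm of $du - dw_j$. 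Putting everything together yields $\fint_{B_j} |du - dw_j|^p \leq c\, r_j \bigl(\fint_{B_j} |f|^q\bigr)^{1/q} \bigl(\fint_{B_j} |du - dw_j|^p\bigr)^{1/p}$; moving the last factor to the left-hand side and taking $p$-th roots gives \eqref{fq u and w} with the stated exponent $1/(q(p-1))$.

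Estimate \eqref{fq j j-1} then follows from \eqref{fq u and w} by the triangle inequality $|dw_{j-1} - dw_j| \leq |du - dw_{j-1}| + |du - dw_j|$ taken in $L^p(B_j)$. Since $B_j = \sigma B_{j-1} \subset B_{j-1}$, the first piece is controlled by $\sigma^{-n/p} (\fint_{B_{j-1}} |du-dw_{j-1}|^p)^{1/p}$ and then by \eqref{fq u and w} applied on $B_{j-1}$; the second piece is bounded directly by \eqref{fq u and w} on $B_j$, using $r_j^q \fint_{B_j} |f|^q \leq \sigma^{q-n} r_{j-1}^q \fint_{B_{j-1}} |f|^q$ to reshape the right-hand side. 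The only mildly delicate points of the whole argument are the $p \geq n$ case of the Sobolev embedding (routine but awkward, since $p^*$ is not defined) and the accounting of $\sigma$-dependent factors in the second bound; as $\sigma$ is a universal parameter to be fixed later, these are harmlessly absorbed into $c_6$ and $c_7$.
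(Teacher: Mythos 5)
Your proposal is correct and follows essentially the same route as the paper: the pointwise inequality $(|du|+|dw_j|)^{p-2}|du-dw_j|^2\geq |du-dw_j|^p$ applied to \eqref{estimateuwforanyp}, then H\"older together with Proposition \ref{poincaresobolev} to absorb the factor $\bigl(\fint_{B_j}|du-dw_j|^p\bigr)^{1/p}$, and finally the triangle inequality plus rescaling of averages for \eqref{fq j j-1}. Your explicit handling of the $p\geq n$ case via an auxiliary exponent $\tilde s$ is a slight refinement of the paper's argument (which only writes $p^*$), not a different approach.
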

\begin{proof}
	Since $p >2,$ \eqref{estimateuwforanyp} yields, 
	\begin{equation}\label{estimateuwforpbigger2}
	\fint_{B_{j}}\lvert du  - dw_{j}\rvert^{p}  \leq c \fint_{B_{j}} \lvert f \rvert \lvert u -w_{j}\rvert .
	\end{equation}
	Since $u - w_{j} \in W^{1,p}_{\delta,T}\left( B_{j};\Lambda^{k}\mathbb{R}^{n}\otimes \mathbb{R}^{N} \right),$ \eqref{estimateuwforpbigger2} implies, by H\"{o}lder and Poincar\'{e}-Sobolev inequality ( cf. Proposition \ref{poincaresobolev}), 
	\begin{align*}
	\fint_{B_{j}}\lvert du  - dw_{j}\rvert^{p}  \leq \fint_{B_{j}} \lvert f \rvert \lvert u -w_{j}\rvert &\leq c \left( \fint_{B_{j}} \lvert u -w_{j}\rvert^{p^{*}} \right)^{\frac{1}{p^{*}}} 
	\left( \fint_{B_{j}} \lvert f \rvert \right)^{\frac{1}{q}} 
	\\&\leq c r_{j}\left( \fint_{B_{j}} \lvert d u - d w_{j}\rvert^{p} \right)^{\frac{1}{p}}\left( \fint_{B_{j}} \lvert f \rvert \right)^{\frac{1}{q}}.
	\end{align*}
	This proves \eqref{fq u and w}. Applying \eqref{fq u and w} on $B_{j-1}$ and $B_{j},$ we deduce 
	\begin{align*}
	\fint_{B_{j}} \lvert d w_{j-1} - d w_{j}\rvert^{p} &\leq c\sigma^{-n}\fint_{B_{j-1}}\lvert du  - dw_{j}\rvert^{p} + c \fint_{B_{j}}\lvert du  - dw_{j}\rvert^{p} \\
	&\leq c \sigma^{-n - \frac{p(n-q)}{q(p-1)}}  \left( r_{j-1}^{q} \fint_{B_{j-1}} \lvert f \rvert^{q} \right)^{\frac{p}{q(p-1)}}.
	\end{align*}
	This proves \eqref{fq j j-1}. 
\end{proof}

\begin{lemma}\label{finalcomparispbigger2}
	Let $u$ be as in \eqref{quasilinear maxwell local coulomb} with $p >2 $ and $v_{j},w_{j}$ be as before in 
	\eqref{quasilinear maxwell tangential homogeneous general}, \eqref{quasilinear maxwell tangential homogeneous frozen general}, respectively, for $j \geq 1$ and let $q$ be as in 
	\eqref{qdef}. Suppose for some $\lambda >0$ we have  
	\begin{equation}\label{upper bound of f}
	r_{j-1} \left( \fint_{B_{j-1}}\lvert f \rvert^{q} \right)^{\frac{1}{q}} \leq \lambda^{p-1}
	\end{equation}
	and for some constant $A \geq 1,$ the estimates 
	\begin{equation}\label{both side bounds on w}
	\sup\limits_{\frac{1}{2}B_{j}} \left\lvert dw_{j} \right\rvert \leq A\lambda \qquad \text{ and } \qquad \frac{\lambda}{A} \leq \left\lvert dw_{j-1} \right\rvert 
	\leq A\lambda \quad \text{ in } B_{j}
	\end{equation}
	hold. Then there exists a constant $c_{8} \equiv c_{8}\left( n, k, p, \gamma, L, \right)$such that 
	\begin{align}\label{grad u -grad v pbigger2}
	\left( \fint_{\frac{1}{2}B_{j}} \left\lvert d u - d v_{j} \right\rvert^{p'}\right)^{\frac{1}{p'}} \leq c_{8}\omega(r_{j})\lambda 
	+ c_{8}\lambda^{2-p}r_{j-1}\left( \fint_{B_{j-1}} \left\lvert f \right\rvert^{q} \right)^{\frac{1}{q}}.
	\end{align}
\end{lemma}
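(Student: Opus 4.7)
The plan is to split via the triangle inequality $du - dv_j = (du - dw_j) + (dw_j - dv_j)$ and bound each summand in $L^{p'}(\frac{1}{2}B_j)$ separately. Both pieces will be handled by the same two-step device: first produce a natural weighted $L^2$-type control on the $V$-difference, then convert it to an $L^{p'}$-estimate on the gradient difference via the left inequality in \eqref{constant cv}, which gives $\lvert z_1 - z_2 \rvert^{p'} \leq c\lvert V(z_1) - V(z_2)\rvert^{p'}(\lvert z_1\rvert + \lvert z_2 \rvert)^{-p'(p-2)/2}$. The crucial algebraic identity $p'(p-2)/(2-p') = p$ lets one apply H\"{o}lder with exponents $2/p'$ and $2/(2-p')$ to split the resulting averaged integral into $(\fint \lvert V(z_1) - V(z_2)\rvert^2)^{p'/2}$ multiplied by $(\fint (\lvert z_1\rvert+\lvert z_2\rvert)^{-p})^{(2-p')/2}$.

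For the frozen-coefficient piece $dw_j - dv_j$, I apply Lemma \ref{secondcomparison2} to get $\fint_{\frac{1}{2}B_j} \lvert V(dw_j) - V(dv_j)\rvert^2 \leq c[\omega(r_j)]^2 (A\lambda)^p$, using the sup bound on $\lvert dw_j \rvert$ from \eqref{both side bounds on w}. After the weighted H\"{o}lder conversion described above, and assuming the inverse-weight average is of order $\lambda^{-p}$, the resulting $L^{p'}$ norm is of order $\omega(r_j)\lambda$, as desired. For the nonhomogeneous piece $du - dw_j$, I would start from \eqref{estimateuwforanyp}, then bound $\fint \lvert f\rvert \lvert u-w_j\rvert$ using H\"{o}lder in $L^q, L^{q'}$ and the Poincar\'{e}-Sobolev inequality (Proposition \ref{poincaresobolev}) on $u - w_j$, combined with \eqref{fq u and w} to control $(\fint\lvert du-dw_j\rvert^p)^{1/p}$. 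The same weighted H\"{o}lder trick then yields an $L^{p'}$ estimate in which the power count, together with the inverse-weight factor, produces precisely the $\lambda^{2-p}r_{j-1}F_{j-1}$ dependence, where $F_{j-1}:= (\fint_{B_{j-1}}\lvert f\rvert^q)^{1/q}$.

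The decisive technical step, and where I expect the main obstacle to lie, is controlling the inverse-weight average $(\fint (\lvert dw_j\rvert+\lvert dv_j\rvert)^{-p})^{1/p} \leq c/\lambda$ (and analogously with $du$ replacing $dv_j$). This is precisely where the non-degeneracy bound $\lvert dw_{j-1}\rvert \geq \lambda/A$ on $B_j$ in \eqref{both side bounds on w} enters: since \eqref{fq j j-1} together with \eqref{upper bound of f} makes $dw_j$ close to $dw_{j-1}$ in $L^p(B_j)$, a Chebyshev-type argument shows that the \emph{bad} set $\{\lvert dw_j\rvert < c\lambda\}$ has small enough measure to be absorbed. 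A parallel argument handles $\lvert dv_j\rvert$, transferring the lower bound from $w_j$ to $v_j$ via the minimality of $v_j$ and the Harnack-type sup estimate \eqref{supestimatefrozenhomogeneous} from Theorem \ref{theoremhomogeneousfrozen}. Once these lower bounds are secured on a set of sufficiently large measure, collecting the contributions on good and bad sets yields the claimed estimate after elementary manipulations.
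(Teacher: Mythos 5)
The decomposition $du-dv_j=(du-dw_j)+(dw_j-dv_j)$ and the two target estimates agree with the paper's, but the mechanism you propose for passing from the $L^2$ bound on the $V$-difference to the $L^{p'}$ bound on the gradient difference has a genuine gap. After your H\"{o}lder step (your exponent arithmetic $p'(p-2)/(2-p')=p$ is correct) the price you pay is the inverse-weight average $\fint(\lvert z_1\rvert+\lvert z_2\rvert)^{-p}$, and you propose to bound it by $c\lambda^{-p}$ via a Chebyshev/good--bad set argument. This cannot work: Chebyshev controls only the \emph{measure} of the set where $\lvert dw_j\rvert+\lvert dv_j\rvert<c\lambda$, whereas what you must integrate there is $(\lvert dw_j\rvert+\lvert dv_j\rvert)^{-p}$, which is unbounded (and possibly non-integrable) precisely on that set; smallness of measure gives no control on the integral of an unbounded integrand. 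Nor is there a Harnack-type \emph{lower} bound for $\lvert dv_j\rvert$ available --- \eqref{supestimatefrozenhomogeneous} is a sup bound, not an inf bound, and $dv_j$ may vanish inside $\frac{1}{2}B_j$. The hypothesis \eqref{both side bounds on w} gives a pointwise lower bound only for $\lvert dw_{j-1}\rvert$, not for $\lvert dw_j\rvert$, $\lvert dv_j\rvert$ or $\lvert du\rvert$.

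The paper's proof avoids negative powers altogether. Using $\lvert dw_{j-1}\rvert\ge\lambda/A$ on $B_j$, one inserts the weight $1\le(A\lvert dw_{j-1}\rvert/\lambda)^{p-2}$ \emph{pointwise} inside the $L^{p'}$ integral of $\lvert du-dw_j\rvert$ (resp.\ $\lvert dv_j-dw_j\rvert$), and then splits $\lvert dw_{j-1}\rvert^{p-2}\lesssim\lvert dw_{j-1}-dw_j\rvert^{p-2}+\lvert dw_j\rvert^{p-2}$. The first piece is handled by H\"{o}lder together with the $L^p$ comparison estimates \eqref{fq u and w} and \eqref{fq j j-1}; the second produces the weighted quantity $\fint\lvert dw_j\rvert^{p-2}\lvert du-dw_j\rvert^2$ (resp.\ with $dv_j$ in place of $du$), which is exactly what the monotonicity and energy estimates \eqref{estimateuwforanyp} and \eqref{l1omega} control, and Young's inequality closes the argument. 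If you wish to keep your $V$-based route you would need to replace the inverse-weight average by this kind of pointwise weight insertion; as written, the step you yourself flag as the decisive one is not merely the main obstacle but an actual failure point of the argument.
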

\begin{proof}
	We first show the estimate 
	\begin{align}\label{grad u -grad w pbigger2}
	\left( \fint_{B_{j}} \left\lvert d u - d w_{j} \right\rvert^{p'}\right)^{\frac{1}{p'}} \leq c\lambda^{2-p}r_{j-1}\left( \fint_{B_{j-1}} \left\lvert f \right\rvert^{q} \right)^{\frac{1}{q}}.
	\end{align}
	Next we would establish 
	\begin{align}\label{grad v -grad w pbigger2}
	\left( \fint_{\frac{1}{2}B_{j}} \left\lvert d w_{j} - d v_{j} \right\rvert^{p'}\right)^{\frac{1}{p'}} \leq c\omega(r_{j})\lambda 
	+ c\lambda^{2-p}r_{j-1}\left( \fint_{B_{j-1}} \left\lvert f \right\rvert^{q} \right)^{\frac{1}{q}}.
	\end{align}
	Then \eqref{grad u -grad v pbigger2} would follow by combining \eqref{grad u -grad w pbigger2} and \eqref{grad v -grad w pbigger2}. 
	
	\textbf{Step 1: Proof of \eqref{grad u -grad w pbigger2}}
	
	We have
	\begin{align*}
	&\left( \fint_{B_{j}} \left\lvert d u - d w_{j} \right\rvert^{p'}\right)^{\frac{1}{p'}} \\
	&\stackrel{\eqref{both side bounds on w}}{\leq} 
	\left( \frac{A}{\lambda} \right)^{p-2}  \left( \fint_{B_{j}} \left\lvert dw_{j-1} \right\rvert^{p'(p-2)}\left\lvert d u - d w_{j} \right\rvert^{p'}\right)^{\frac{1}{p'}} \\
	&\begin{aligned}
	\leq c\left( \frac{A}{\lambda} \right)^{p-2} &\left( \fint_{B_{j}} \left\lvert dw_{j} - dw_{j-1} \right\rvert^{p'(p-2)}\left\lvert d u - d w_{j} \right\rvert^{p'}\right)^{\frac{1}{p'}}
	\\ &\qquad + c\left( \frac{A}{\lambda} \right)^{p-2} \left( \fint_{B_{j}} \left\lvert dw_{j} \right\rvert^{p'(p-2)}\left\lvert d u - d w_{j} \right\rvert^{p'}\right)^{\frac{1}{p'}}
	= I_{1} + I_{2}.
	\end{aligned}
	\end{align*}
	Now we estimate the last two integrals separately 
	We have, noting that $p' \leq p,$
	\begin{align*}
	I_{1} &\leq c\left( \frac{A}{\lambda} \right)^{p-2}
	\left( \fint_{B_{j}} \left\lvert dw_{j} - dw_{j-1} \right\rvert^{p}\right)^{\frac{p-2}{p}}
	\left( \fint_{B_{j}} \left\lvert d u - d w_{j} \right\rvert^{p}\right)^{\frac{1}{p}} \\
	&\stackrel{\eqref{fq j j-1},\eqref{fq u and w}}{\leq} c\lambda^{2-p}r_{j-1}\left( \fint_{B_{j-1}} \left\lvert f \right\rvert^{q} \right)^{\frac{1}{q}}.
	\end{align*}
	For the last integral, we note that 
	\begin{align*}
	\left( \fint_{B_{j}} \left\lvert dw_{j} \right\rvert^{p'(p-2)}\left\lvert d u - d w_{j} \right\rvert^{p'}\right)^{\frac{1}{p'}} 
	&= \left( \fint_{B_{j}} \left\lvert dw_{j} \right\rvert^{p-2}\left\lvert d u - d w_{j} \right\rvert^{p'} \left\lvert dw_{j} \right\rvert^{\frac{p-2}{p-1}}\right)^{\frac{1}{p'}} \\
	&\stackrel{\text{H\"{o}lder}}{\leq} 
	 \left( \fint_{B_{j}} \left\lvert dw_{j} \right\rvert^{p-2}\left\lvert d u - d w_{j} \right\rvert^{2} \right)^{\frac{1}{2}}
	\left( \fint_{B_{j}} \left\lvert dw_{j} \right\rvert^{p}\right)^{\frac{p-2}{2p}} .\end{align*}
	We also have 
	\begin{align*}
	\fint_{B_{j}} \left\lvert dw_{j} \right\rvert^{p}
	&\leq  c \fint_{B_{j}} \left\lvert dw_{j} - dw_{j-1} \right\rvert^{p} + c \fint_{B_{j}} \left\lvert dw_{j-1} \right\rvert^{p} \\
	&\stackrel{\eqref{fq j j-1},\eqref{both side bounds on w}}{\leq} c \left( r_{j}^{q} \fint_{B_{j}} \lvert f \rvert^{q} \right)^{\frac{p}{q(p-1)}} 
	+ cA\lambda^{p} \stackrel{\eqref{upper bound of f}}{\leq} c\lambda^{p}.
	\end{align*}
	Thus combining the two estimates, we deduce, 
	\begin{align*}
	I_{2}
	&\leq c\lambda^{\frac{2-p}{2}} \left( \fint_{B_{j}} \left\lvert dw_{j} \right\rvert^{p-2}\left\lvert d u - d w_{j} \right\rvert^{2} \right)^{\frac{1}{2}} \\
	&\leq c\lambda^{\frac{2-p}{2}} 
	\left( \fint_{B_{j}} \left( \left\lvert du \right\rvert + \left\lvert dw_{j} \right\rvert \right)^{p-2}\left\lvert d u - d w_{j} \right\rvert^{2} \right)^{\frac{1}{2}} \\
	&\stackrel{\eqref{estimateuwforanyp}}{\leq} c \lambda^{\frac{2-p}{2}}  \left( \fint_{B_{j}} \lvert f \rvert \lvert u -w_{j}\rvert\right)^{\frac{1}{2}} \\
	&\leq c\lambda^{\frac{2-p}{2}} \left( \fint_{B_{j}} \lvert u -w_{j}\rvert^{q'}\right)^{\frac{1}{2q'}} \left( r_{j}^{q} \fint_{B_{j}} \lvert f \rvert^{q} \right)^{\frac{1}{2q}} \\
	&\stackrel{\eqref{prop q}}{\leq} c\lambda^{\frac{2-p}{2}} \left( \fint_{B_{j}} \lvert du - dw_{j}\rvert^{p'}\right)^{\frac{1}{2p'}} \left( r_{j}^{q} \fint_{B_{j}} \lvert f \rvert^{q} \right)^{\frac{1}{2q}} \\ 
	&= c \left[ \left( \fint_{B_{j}} \lvert du - dw_{j}\rvert^{p'}\right)^{\frac{1}{p'}}\right]^{\frac{1}{2}}
	\left[ \lambda^{2-p} \left( r_{j}^{q} \fint_{B_{j}} \lvert f \rvert^{q} \right)^{\frac{1}{q}} \right]^{\frac{1}{2}}
	\end{align*}
	But this implies, by Young's inequality with $\varepsilon > 0, $
	\begin{align*}
	I_{2} 
	 	&\leq  \varepsilon \left( \fint_{B_{j}} \lvert du - dw_{j}\rvert^{p'}\right)^{\frac{1}{p'}} + c\lambda^{2-p} \left( r_{j}^{q} \fint_{B_{j}} \lvert f \rvert^{q} \right)^{\frac{1}{q}} \\
	&\leq \varepsilon \left( \fint_{B_{j}} \lvert du - dw_{j}\rvert^{p'}\right)^{\frac{1}{p'}} + c\lambda^{2-p} \left( r_{j-1}^{q} \fint_{B_{j-1}} \lvert f \rvert^{q} \right)^{\frac{1}{q}}.
	\end{align*}
	Combining these estimates and choosing $\varepsilon > 0$ small enough, we obtain \eqref{grad u -grad w pbigger2}. \bigskip 
	
	\textbf{Step 2: Proof of \eqref{grad v -grad w pbigger2})}
	\noindent We have
	\begin{align*}
	&\left( \fint_{\frac{1}{2}B_{j}} \left\lvert d v_{j} - d w_{j} \right\rvert^{p'}\right)^{\frac{1}{p'}} \\
	&\stackrel{\eqref{both side bounds on w}}{\leq} 
	\left( \frac{A}{\lambda} \right)^{p-2}  \left( \fint_{\frac{1}{2}B_{j}} \left\lvert dw_{j-1} \right\rvert^{p'(p-2)}\left\lvert d v_{j} - d w_{j} \right\rvert^{p'}\right)^{\frac{1}{p'}} \\
	&\begin{aligned}
	\leq c\left( \frac{A}{\lambda} \right)^{p-2} &\left( \fint_{\frac{1}{2}B_{j}} \left\lvert dw_{j} - dw_{j-1} \right\rvert^{p'(p-2)}\left\lvert d v_{j} - d w_{j} \right\rvert^{p'}\right)^{\frac{1}{p'}}
	\\ &\qquad + c\left( \frac{A}{\lambda} \right)^{p-2} \left( \fint_{\frac{1}{2}B_{j}} \left\lvert dw_{j} \right\rvert^{p'(p-2)}\left\lvert d v_{j} - d w_{j} \right\rvert^{p'}\right)^{\frac{1}{p'}}.
	\end{aligned}
	\end{align*}
	The first integral on the right can be estimated easily as 
	\begin{align*}
	\left( \frac{A}{\lambda} \right)^{p-2}  &\left( \fint_{\frac{1}{2}B_{j}} \left\lvert dw_{j} - dw_{j-1} \right\rvert^{p'(p-2)}\left\lvert d v_{j} - d w_{j} \right\rvert^{p'}\right)^{\frac{1}{p'}} \\
	&\leq \left( \frac{A}{\lambda} \right)^{p-2}  \left( \fint_{\frac{1}{2}B_{j}} \left\lvert dw_{j} - dw_{j-1} \right\rvert^{p}\right)^{\frac{p-2}{p}}
	\left( \fint_{\frac{1}{2}B_{j}} \left\lvert d v_{j} - d w_{j} \right\rvert^{p}\right)^{\frac{1}{p}} \\
	&\stackrel{\eqref{l1omegageneral},\eqref{both side bounds on w}}{\leq}  
	\left( \frac{A}{\lambda} \right)^{p-2}  \left( \fint_{\frac{1}{2}B_{j}} \left\lvert dw_{j} - dw_{j-1} \right\rvert^{p}\right)^{\frac{p-2}{p}}
	\left[ \omega(r_{j})\right]^{\frac{2}{p}}\lambda \\
	&\leq c\omega(r_{j})\lambda + c \lambda^{1-p} \fint_{\frac{1}{2}B_{j}} \left\lvert dw_{j} - dw_{j-1} \right\rvert^{p} \\
	&\stackrel{\eqref{fq j j-1}}{\leq} c\omega(r_{j})\lambda + c \lambda^{1-p}\left( r_{j-1}^{q} \fint_{B_{j-1}} \lvert f \rvert^{q} \right)^{\frac{p}{q(p-1)}} \\
	&\stackrel{\eqref{upper bound of f}}{\leq} c\omega(r_{j})\lambda + c \lambda^{2-p}r_{j-1} \left( \fint_{B_{j-1}} \lvert f \rvert^{q} \right)^{\frac{1}{q}}.
	\end{align*}
	Now for the second integral we estimate as follows. 
	\begin{align*}
	&c\left( \frac{A}{\lambda} \right)^{p-2} \left( \fint_{\frac{1}{2}B_{j}} \left\lvert dw_{j} \right\rvert^{p'(p-2)}\left\lvert d v_{j} - d w_{j} \right\rvert^{p'}\right)^{\frac{1}{p'}} \\
	&\stackrel{\eqref{both side bounds on w}}{\leq} c\left( \frac{A}{\lambda} \right)^{p-2} \left( A\lambda \right)^{\frac{p-2}{2}} 
	\left( \fint_{\frac{1}{2}B_{j}} \left\lvert dw_{j} \right\rvert^{\frac{p'(p-2)}{2}}\left\lvert d v_{j} - d w_{j} \right\rvert^{p'}\right)^{\frac{1}{p'}} \\
	&\leq c \lambda^{\frac{2-p}{2}} \left( \fint_{\frac{1}{2}B_{j}} \left\lvert dw_{j} \right\rvert^{p-2}\left\lvert d v_{j} - d w_{j} \right\rvert^{2}\right)^{\frac{1}{2}} \\
	&\leq c \lambda^{\frac{2-p}{2}} \left( \fint_{\frac{1}{2}B_{j}} \left( \left\lvert dv_{j} \right\rvert + \left\lvert dw_{j} \right\rvert \right)^{p-2}
	\left\lvert d v_{j} - d w_{j} \right\rvert^{2}\right)^{\frac{1}{2}} \\
	&\leq  c \lambda^{\frac{2-p}{2}} \left( \fint_{\frac{1}{2}B_{j}} \left\lvert V(d v_{j}) - V(d w_{j}) \right\rvert^{2}\right)^{\frac{1}{2}} \\
	&\stackrel{\eqref{l1omega}, \eqref{both side bounds on w}}{\leq} c\omega(r_{j})\lambda . 
	\end{align*}
	Combining the estimates proves \eqref{grad v -grad w pbigger2} and finishes the proof of the lemma. 
\end{proof}

\paragraph{Comparison estimates for $ 1 < p \leq 2$}
\begin{lemma}\label{finalcomparispsmaller2}
	Let $u$ be as in theorem \ref{main theorem} with $1 < p \leq 2 $ and $v_{j},w_{j}$ be as before in 
	\eqref{quasilinear maxwell tangential homogeneous general}, \eqref{quasilinear maxwell tangential homogeneous frozen general}, respectively, for $j \geq 1$ and let $q$ be as in 
	\eqref{qdef}. Suppose for some $\lambda >0$ we have  
	\begin{align}\label{f bound p less 2}
	r_{j} \left( \fint_{B_{j}}\lvert f \rvert^{q} \right)^{\frac{1}{q}} \leq \lambda^{p-1} \\
	\intertext{ and }
	\left( \fint_{B_{j}}\lvert du \rvert^{p} \right)^{\frac{1}{p}} \leq \lambda \label{du bound p less 2}
	\end{align}
	hold. Then there exists a constant $c_{8} \equiv c_{8}\left( n, k, p, \gamma, L, \right)$such that 
	\begin{equation}\label{lambdafirstpsmaller2d}
	\left( \fint_{\frac{1}{2}B_{j}} \left\lvert d u - d v_{j} \right\rvert^{p}\right)^{\frac{1}{p}} \leq c_{8}\omega(r_{j})\lambda 
	+ c_{8}\lambda^{2-p}r_{j}\left( \fint_{B_{j}} \left\lvert f \right\rvert^{q} \right)^{\frac{1}{q}}. 
	\end{equation}
	Moreover, there exists another constant $c_{9} =c_{9}(n,k, p, \nu, L, \sigma)$ such that  
	\begin{equation}\label{lambdasecondpsmaller2}
	\left( \fint_{B_{j}} \left\lvert d u - d w_{j} \right\rvert^{p}\right)^{\frac{1}{p}} \leq 
	c_{9}\lambda^{2-p}r_{j}\left( \fint_{B_{j}} \left\lvert f \right\rvert^{q} \right)^{\frac{1}{q}}.
	\end{equation} 
\end{lemma}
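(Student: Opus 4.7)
The plan is to establish \eqref{lambdasecondpsmaller2} first via a H\"{o}lder argument that converts the degenerate monotonicity bound of Lemma \ref{u w comparison any p easy} into an $L^p$ comparison, and then derive \eqref{lambdafirstpsmaller2d} from the triangle inequality combined with a separate estimate on $dw_j - dv_j$. Throughout I would freely use the observation that, by the minimizing properties of $w_j$ and $v_j$ from Proposition \ref{minimizerexistenceprop} together with \eqref{du bound p less 2}, one has $\fint_{B_j}|dw_j|^p \leq c\lambda^p$ and $\fint_{\frac{1}{2}B_j}|dv_j|^p \leq c\lambda^p$.

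The main obstacle for \eqref{lambdasecondpsmaller2} in the subquadratic regime is that the weight $(|du|+|dw_j|)^{p-2}$ appearing in Lemma \ref{u w comparison any p easy} has a \emph{negative} exponent and does not dominate $|du - dw_j|^p$. To circumvent this, I would use the pointwise identity $|du - dw_j|^p = [(|du|+|dw_j|)^{p-2}|du-dw_j|^2]^{p/2}(|du|+|dw_j|)^{p(2-p)/2}$ together with H\"{o}lder with exponents $2/p$ and $2/(2-p)$, which yields
\begin{equation*}
\fint_{B_j}|du-dw_j|^p \leq \Bigl(\fint_{B_j}(|du|+|dw_j|)^{p-2}|du-dw_j|^2\Bigr)^{p/2}\Bigl(\fint_{B_j}(|du|+|dw_j|)^p\Bigr)^{(2-p)/2}.
\end{equation*}
Lemma \ref{u w comparison any p easy} bounds the first factor by $c\fint_{B_j}|f||u-w_j|$, and a further application of H\"{o}lder with exponent $q$ (using $q' = p^{\ast}$ for $p < n$ by \eqref{prop q}) followed by the Poincar\'{e}--Sobolev inequality on $W^{1,p}_{\delta,T}(B_j)$ from Proposition \ref{poincaresobolev} applied to $u - w_j$ produces the further bound $cr_j(\fint|f|^q)^{1/q}(\fint|du - dw_j|^p)^{1/p}$; the second factor is controlled by $c\lambda^{p(2-p)/2}$. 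Rearranging to absorb the $(\fint|du-dw_j|^p)^{1/2}$ on the right delivers \eqref{lambdasecondpsmaller2}.

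For \eqref{lambdafirstpsmaller2d}, the triangle inequality and the trivial bound $(\fint_{\frac{1}{2}B_j}|du-dw_j|^p)^{1/p} \leq c(\fint_{B_j}|du-dw_j|^p)^{1/p}$ reduce matters to estimating $(\fint_{\frac{1}{2}B_j}|dw_j - dv_j|^p)^{1/p}$. Applying \eqref{v estimate p less 2} with $z_1 = dw_j$, $z_2 = dv_j$ and raising to the $p$-th power gives
\begin{equation*}
|dw_j - dv_j|^p \leq c|V(dw_j)-V(dv_j)|^2 + c|V(dw_j)-V(dv_j)|^p|dv_j|^{p(2-p)/2};
\end{equation*}
averaging, applying H\"{o}lder with exponents $2/p$ and $2/(2-p)$ to the second term, invoking Lemma \ref{secondcomparison2} to obtain $\fint|V(dw_j)-V(dv_j)|^2 \leq c\omega(r_j)^2\lambda^p$, and using $\fint|dv_j|^p \leq c\lambda^p$ yields $(\fint|dw_j - dv_j|^p)^{1/p} \leq c(\omega(r_j)^{2/p} + \omega(r_j))\lambda \leq c\omega(r_j)\lambda$, where the last step uses $2/p \geq 1$ and the boundedness of $\omega$. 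Combining with \eqref{lambdasecondpsmaller2} gives \eqref{lambdafirstpsmaller2d}. The core technical idea is the H\"{o}lder interpolation between the degenerate monotonicity quantity and the natural $L^p$-energy; all other ingredients---Poincar\'{e}--Sobolev on $W^{1,p}_{\delta,T}$, the minimizing properties of $w_j, v_j$, and the coefficient-freezing bound of Lemma \ref{secondcomparison2}---are already in hand.
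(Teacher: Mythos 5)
Your proposal is correct and follows essentially the same route as the paper: both arguments obtain \eqref{lambdasecondpsmaller2} from Lemma \ref{u w comparison any p easy} combined with H\"{o}lder and the Poincar\'{e}--Sobolev inequality of Proposition \ref{poincaresobolev}, control $dw_{j}-dv_{j}$ via Lemma \ref{secondcomparison2} and the minimizing properties, and conclude by the triangle inequality. The only cosmetic difference is in \eqref{lambdasecondpsmaller2}: the paper splits $\lvert du - dw_{j}\rvert$ through \eqref{v estimate p less 2} into two terms absorbed by Young's inequality, whereas you interpolate directly between the degenerate monotonicity quantity and the $L^{p}$-energy and absorb by division; both yield the same estimate.
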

\begin{proof} This follows exactly as in Lemma 7 of \cite{KuusiMingione_nonlinearStein}. 
	We first prove \eqref{lambdasecondpsmaller2}. By \eqref{v estimate p less 2} and  H\"{o}lder inequality, we deduce, 
	\begin{multline*}
	\left( \fint_{B_{j}} \left\lvert d u - d w_{j} \right\rvert^{p}\right)^{\frac{1}{p}} \leq  
	c\left( \fint_{B_{j}} \left\lvert V(du) - V(d w_{j}) \right\rvert^{2}\right)^{\frac{1}{p}} \\
	+   c\left( \fint_{B_{j}} \left\lvert V(du) - V(d w_{j}) \right\rvert^{2}\right)^{\frac{1}{2}}
	\left( \fint_{B_{j}} \left\lvert d u \right\rvert^{p}\right)^{\frac{(2-p)p}{2}}= I_{1} + I_{2} . 
	\end{multline*}
	Now we estimate the two terms on the right hand side. For the first one, using Young's' inequality with $\varepsilon > 0,$ we deduce, 
	\begin{equation*}
	I_{1} \leq \left(\fint_{B_{j}}\left( \left\lvert du \right\rvert + \left\lvert dw_{j} \right\rvert  \right)^{p-2}
	\left\lvert du  - dw_{j}\right\rvert^{2} \right)^{\frac{1}{p}}  \stackrel{\eqref{estimateuwforanyp}}{\leq} c \left(\fint_{B_{j}} \lvert f \rvert \lvert u -w_{j}\rvert \right)^{\frac{1}{p}}.
	\end{equation*}
	But we have, using H\"{o}lder inequality,  \begin{align*}
	\left(\fint_{B_{j}} \lvert f \rvert \lvert u -w_{j}\rvert \right)^{\frac{1}{p}} 	&\stackrel{ \eqref{qdef}}{\leq}  \left( \fint_{B_{j}} \lvert u -w_{j}\rvert^{q'} \right)^{\frac{1}{pq'}} 
	\left( \fint_{B_{j}} \lvert f \rvert^{q} \right)^{\frac{1}{pq}} \\
	&\stackrel{\eqref{poincaresobolevineq}}{\leq } c \left( \fint_{B_{j}} \lvert d u - d w_{j}\rvert^{p} \right)^{\frac{1}{p^2}}\left( r^{q}_{j} \fint_{B_{j}}  \lvert f \rvert^{q} \right)^{\frac{1}{q}} \\
	&\leq \varepsilon \left( \fint_{B_{j}} \lvert d u - d w_{j}\rvert^{p} \right)^{\frac{1}{p}} 
	+ c \left( r^{q}_{j} \fint_{B_{j}}  \lvert f \rvert^{q} \right)^{\frac{1}{q(p-1)}} \\
	&\stackrel{\eqref{f bound p less 2}}{\leq} \varepsilon \left( \fint_{B_{j}} \lvert d u - d w_{j}\rvert^{p} \right)^{\frac{1}{p}} 
	+ c\lambda^{2-p}r_{j}\left(  \fint_{B_{j}}  \lvert f \rvert^{q} \right)^{\frac{1}{q}}  .
	\end{align*}
	Similarly, for the second term, we deduce, 
	\begin{align*}
	I_{2} &\stackrel{\eqref{du bound p less 2}}{\leq}  
	\lambda^{\frac{2-p}{2}}\left( \fint_{B_{j}} \left\lvert V(du) - V(d w_{j}) \right\rvert^{2}\right)^{\frac{1}{2}} \\
	&\stackrel{\eqref{estimateuwforanyp}}{\leq} c \lambda^{\frac{2-p}{2}}\left( \fint_{B_{j}}  \lvert f \rvert \lvert u -w_{j}\rvert \right)^{\frac{1}{2}} \\
	&\leq c \lambda^{\frac{2-p}{2}} 
	\left( \fint_{B_{j}} \lvert d u - d w_{j}\rvert^{p} \right)^{\frac{1}{2p}}\left( r^{q}_{j} \fint_{B_{j}}  \lvert f \rvert^{q} \right)^{\frac{1}{2q}} \\
	&\leq \varepsilon \left( \fint_{B_{j}} \lvert d u - d w_{j}\rvert^{p} \right)^{\frac{1}{p}} 
	+ c\lambda^{2-p}r_{j}\left(  \fint_{B_{j}}  \lvert f \rvert^{q} \right)^{\frac{1}{q}} .
	\end{align*}
	Combining the last three estimates and choosing $\varepsilon > 0$ small enough to absorb the terms on the left hand side, we have shown \eqref{lambdasecondpsmaller2}. Now we show 
	\begin{align}\label{dv -dw psmaller2}
	\left( \fint_{\frac{1}{2}B_{j}} \left\lvert d w_{j} - d v_{j} \right\rvert^{p}\right)^{\frac{1}{p}} \leq c \omega(r_{j})\lambda 
	+ c\lambda^{2-p}r_{j}\left( \fint_{B_{j}} \left\lvert f \right\rvert^{q} \right)^{\frac{1}{q}} .
	\end{align}
	Note that \eqref{dv -dw psmaller2} combined with \eqref{lambdasecondpsmaller2} implies \eqref{lambdafirstpsmaller2d}. By \eqref{constant cv}, we have, 
	$$ \left\lvert dv_{j} - dw_{j}\right\rvert^{p} 
	\leq c \left\lvert V(dv_{j}) - V(dw_{j}) \right\rvert^{p}\left( \left\lvert dv_{j}\right\rvert +\left\lvert dw_{j}\right\rvert \right)^{\frac{(2-p)p}{2}}.$$
	Thus, by H\"{o}lder inequality, 
	\begin{align*}
	&\left( \fint_{\frac{1}{2}B_{j}} \left\lvert d w_{j} - d v_{j} \right\rvert^{p}\right)^{\frac{1}{p}} \\
	&\leq \left( \fint_{\frac{1}{2}B_{j}} \left\lvert V(dv_{j}) - V(d w_{j}) \right\rvert^{2}\right)^{\frac{1}{2}}
	\left( \fint_{\frac{1}{2}B_{j}} \left( \left\lvert d v_{j} \right\rvert + \left\lvert d w_{j} \right\rvert \right)^{p}\right)^{\frac{(2-p)}{2p}} \\
	&\stackrel{\eqref{l1omegageneral}}{\leq}  
	c \omega(r_{j}) \left( \fint_{\frac{1}{2}B_{j}}  \left\lvert d w_{j} \right\rvert^{p} \right)^{\frac{1}{p}}  \qquad \qquad \qquad \text{[ Using Proposition \ref{minimizerexistenceprop} ]}\\
	&\leq c \omega(r_{j}) \left( \fint_{\frac{1}{2}B_{j}}  \left\lvert d u \right\rvert^{p} \right)^{\frac{1}{p}} 
	+ c \omega(r_{j}) \left( \fint_{\frac{1}{2}B_{j}}  \left\lvert du - d w_{j} \right\rvert^{p} \right)^{\frac{1}{p}}  \\
	&\stackrel{\eqref{lambdasecondpsmaller2}\eqref{du bound p less 2}}{\leq} c \omega(r_{j})\lambda 
	+ c\lambda^{2-p}r_{j}\left( \fint_{B_{j}} \left\lvert f \right\rvert^{q} \right)^{\frac{1}{q}} .
	\end{align*}
	This concludes the proof.   \end{proof}

\subsection{Proof of theorem \ref{main theorem}}
\paragraph{Pointwise bounds}
\begin{theorem}\label{gradientsupboundgeneral}
	Let $u$ be as in theorem \ref{main theorem}. Then $d u$ is locally bounded in $\Omega.$ Moreover, there exists a constant 
	$c = c(n,k,p, \gamma, L, \omega( \cdot )) \geq 1$ and a positive radius $R_{0}= R_{0}(n,k,p, \gamma, L, \omega( \cdot )) >0$ such that the pointwise estimate 
	$$ \left\lvert d u (x_{0}) \right\rvert \leq c \left( \fint_{B(x_{0}, R)} \left\lvert d u \right\rvert^{s}\right)^{\frac{1}{s}} 
	+ c\left\lVert f \right\rVert^{\frac{1}{p-1}}_{L^{(n,1)}} $$
	holds whenever $B(x_{0},2R) \subset \Omega$, $2R \leq R_{1}$ and $x_{0}$ is a Lebesgue point of $d u.$ If $a(\cdot )$ is a constant function, the estimate holds without any restriction on 
	$R.$
\end{theorem}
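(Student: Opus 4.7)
The strategy is to follow the scheme already used in the proof of Theorem \ref{dwsupboundhomogeneousDini} (which dealt with the homogeneous case) and to combine it with the comparison lemmas \ref{u w comparison any p easy}--\ref{finalcomparispsmaller2} that handle the inhomogeneity. By Lemma \ref{gauge fixing lemma} we may assume $u$ is coclosed, so that the Coulomb-gauged comparison maps $w_j$ in \eqref{quasilinear maxwell tangential homogeneous general} and the frozen-coefficient maps $v_j$ in \eqref{quasilinear maxwell tangential homogeneous frozen general} are well defined on the dyadic chain $B_j = B(x_0, r_j)$, $r_j = \sigma^j R$, with $\sigma \in (0, 1/4)$ to be chosen. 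The starting radius $R_0$ will be fixed small enough that $\omega(R_0) + \int_0^{2R_0} \omega(\varrho)\,d\varrho/\varrho$ is controlled by a geometric constant depending on $\sigma$, as in \eqref{smallnessradius1}.

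Define the reference level
\[
\lambda := H\Bigl[\, \Bigl(\fint_{B(x_0,R)} |du|^s\Bigr)^{1/s} + \|f\|_{L^{(n,1)}(B(x_0,R))}^{1/(p-1)}\,\Bigr],
\]
where $H \geq 1$ depends on $n,N,k,p,\gamma,L$ and is chosen large in the final step. The core of the argument is an induction over $j$ showing that, for a suitable $A = A(n,N,k,p,\gamma,L) \geq 1$, the bounds
\[
\Bigl(\fint_{B_j} |du|^p\Bigr)^{1/p} \leq \lambda, \qquad \sup_{\frac{1}{2}B_{j-1}} |dw_{j-1}| \leq A\lambda
\]
and the size condition $r_{j-1}(\fint_{B_{j-1}}|f|^q)^{1/q} \leq \lambda^{p-1}$ required to invoke Lemma \ref{finalcomparispbigger2} (respectively \ref{finalcomparispsmaller2}) persist at every scale. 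The inductive step uses the frozen oscillation decay of Theorem \ref{theoremhomogeneousfrozen}(ii)--(iii) applied to $v_j$ to control $\sup_{\frac{1}{2}B_j}|dv_j|$ and then the comparison estimates \eqref{grad u -grad v pbigger2} or \eqref{lambdafirstpsmaller2d} to transfer the bound back to $du$ on $\frac{1}{2}B_j$ up to errors of the form
\[
\omega(r_j)\lambda + \lambda^{2-p} r_{j-1}\Bigl(\fint_{B_{j-1}} |f|^q\Bigr)^{1/q}.
\]
Summing these errors over $j$ is where the Lorentz hypothesis enters: by the standard estimate
\[
\sum_{j \geq 0} r_j \Bigl(\fint_{B_j} |f|^q\Bigr)^{1/q} \leq c\,\|f\|_{L^{(n,1)}(B(x_0, R))}
\]
(valid for $1 \le q < n$, in particular for the $q$ from \eqref{qdef}), together with the Dini tail bound, the total error is geometrically summable and, thanks to the normalization of $\lambda$, can be absorbed into $\tfrac{1}{2}\lambda^{p-1}$. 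Closing the induction yields $\sup_j (\fint_{B_j} |du|^p)^{1/p} \leq C\lambda$, so at any Lebesgue point one obtains $|du(x_0)| = \lim_j |\fint_{B_j} du| \le C\lambda$, which is the desired pointwise bound. The lowering of the exponent from $p$ to $s$ in the stated average is performed via a covering/self-improvement argument on concentric balls, exactly as in Step 4 of the proof of Theorem \ref{dwcontinuityhomogeneousDini}.

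The main obstacle is the delicate choice of $A$, $\sigma$ and $H$ that makes the induction close. In the $p>2$ range, the factor $\lambda^{2-p}$ in \eqref{grad u -grad v pbigger2} is dangerous for small $\lambda$; it is precisely offset by the normalization $\|f\|_{L^{(n,1)}}^{1/(p-1)} \lesssim \lambda$ built into the definition of $\lambda$, so that $\lambda^{2-p} r_{j-1}(\fint|f|^q)^{1/q}$ is of order $\omega$-tail $\times \lambda$. In the $1<p\le 2$ range the roles of \eqref{f bound p less 2} and \eqref{du bound p less 2} must be maintained simultaneously, which is why both the $L^s$-term and the $L^{(n,1)}$-term appear symmetrically in $\lambda$. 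Once this scaling is balanced, the rest of the proof runs in parallel with the homogeneous Dini case treated earlier, and we skip the routine verifications that are identical, up to obvious notational changes, to \cite{KuusiMingione_nonlinearStein}.
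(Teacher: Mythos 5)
Your proposal follows exactly the route the paper takes: the paper's proof of Theorem \ref{gradientsupboundgeneral} simply invokes the argument of Theorem 4 of \cite{KuusiMingione_nonlinearStein} verbatim once Lemmas \ref{finalcomparispbigger2} and \ref{finalcomparispsmaller2} are in place, and your sketch is a faithful outline of that argument (normalization of $\lambda$ by the $L^{s}$ average of $du$ and $\lVert f\rVert_{L^{(n,1)}}^{1/(p-1)}$, iteration on shrinking balls with the comparison maps $w_j, v_j$, absorption of the $\lambda^{2-p}$ factor via the choice of $H$, and summability of $\sum_j r_j(\fint_{B_j}|f|^q)^{1/q}$ against the $L^{(n,1)}$ norm together with the Dini tail). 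The one point you gloss over is the degenerate/non-degenerate dichotomy required when the lower bound $\lambda/A\le |dw_{j-1}|$ in \eqref{both side bounds on w} fails, but since you, like the paper itself, defer these verifications to \cite{KuusiMingione_nonlinearStein}, the two approaches coincide.
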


\begin{proof}
	With lemma \ref{finalcomparispbigger2} and lemma \ref{finalcomparispsmaller2} at our disposal, now the arguments of the proof of Theorem 4 in \cite{KuusiMingione_nonlinearStein} works verbatim ( with the obvious notational modifications of writing $du,  dw_{j}, dw_{j-1}, dv_{j}$ in place of $Du, Dw_{j}, Dw_{j-1}, Dv_{j}$ etc ) to conclude the proof. We skip the details.  
	 \end{proof} 

\paragraph{Continuity of the exterior derivative}
Now the proof of continuity in theorem \ref{main theorem} follows exactly as in the proof of Theorem 1 in \cite{KuusiMingione_nonlinearStein}, with the obvious notational modifications mentioned above. We skip the details. \smallskip 

\paragraph{Proof of VMO regularity} 
The VMO regularity for the gradient now follows from estimates for $du$ by local estimates. Indeed, by local estimates \eqref{local estimate campanato}, we have, for any ball $B_{r} \subset \subset \Omega,$ 
\begin{align*}
\left[ \nabla u \right]_{\mathcal{L}^{2,n }(B(x,r/2))} \leq  c\left( \left[ du \right]_{\mathcal{L}^{2,n}(B(x,r))} + \left\lVert \nabla u \right\rVert_{L^{p}(B(x,r))} \right). 
\end{align*}
But since $du$ is continuous and $u \in W^{1,p}$, the right hand side can be made arbitrarily small by choosing $r$ small enough, proving the VMO regularity of $\nabla u.$

\section{Campanato estimates}
Consider the inhomogeneous quasilinear system 
\begin{align}\label{quasilinear maxwell tangential divergenceform}
\left\lbrace \begin{aligned}
\delta ( a(x) \lvert du \rvert^{p-2} du )  &= \delta F   &&\text{ in } \Omega,\\
\delta u &= 0 &&\text{ in } \Omega, 
\end{aligned} 
\right. 
\end{align}
where $u:\Omega \subset \mathbb{R}^{n} \rightarrow \Lambda^{k}\left( \mathbb{R}^{n}\right)$ for some $0 \leq k \leq n-1,$ $p \geq 2$ and the coefficient function
$a:\Omega \rightarrow [ \gamma, L ]$ is $C^{0,\alpha}_{\text{loc}}\left( \Omega \right)$, where $0 < \gamma < L < \infty$ and $F \in L_{loc}^{p'}(\Omega; \mathbb{R}^{\binom{n}{k}\times n}).$ Let $\beta_{2}$ be the exponent given in \eqref{Vdvscaling}. 

\begin{theorem}[Campanato estimates]\label{campanato estimates}
	Let $n \geq 2 ,$ $N \geq 1$ and $0 \leq k \leq n-1$ be integers and let $\Omega \subset \mathbb{R}^{n}$ be open. Suppose that 
	\begin{itemize}
		\item[(i)] $a:\Omega \rightarrow [ \nu, L ]$ is $C^{0,\alpha}_{\text{loc}}\left( \Omega \right)$, where $0 < \nu < L < \infty.$ 
		\item[(ii)]$F \in \mathcal{L}_{loc}^{p',\lambda}\left(\Omega; \Lambda^{k}\mathbb{R}^{n}\otimes \mathbb{R}^{N} \right)$ for some $0 \leq \lambda < \min \lbrace  n+ 2\alpha, n+2\beta_{2} \rbrace.$ 
	\end{itemize} 
	Let $ 2 \leq  p < \infty $ and let $u \in 
	W_{loc}^{1,p} \left(\Omega; \Lambda^{k}\mathbb{R}^{n}\otimes \mathbb{R}^{N} \right)$ be a local weak solution to \eqref{quasilinear maxwell tangential divergenceform}. Then we have $$ \nabla u \in \mathcal{L}_{loc}^{2, \frac{np -2n + 2\lambda}{p}} \left( \Omega; \mathbb{R}^{\tbinom{n}{k} \times n }\otimes \mathbb{R}^{N} \right) .$$ 
\end{theorem}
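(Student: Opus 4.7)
}
The plan is to run a Campanato-type excess-decay iteration for the quantity $\Phi(B_R) := \int_{B_R}|V(du) - (V(du))_{B_R}|^2$, then convert the resulting regularity of $V(du)$ first to $du$ and then to $\nabla u$ via the linear local estimate. Fix a ball $B_R \subset\subset \Omega$ and introduce two comparison forms: let $w$ solve the homogeneous variable-coefficient problem \eqref{quasilinear maxwell tangential homogeneous general} on $B_R$ with $\nu\wedge w = \nu\wedge u$, and $v$ solve the frozen-coefficient problem \eqref{quasilinear maxwell tangential homogeneous frozen general} on $B_{R/2}$ with $\nu\wedge v = \nu\wedge w$. Since $\delta u = \delta w = 0$ and $\nu\wedge(u-w)=0$ on $\partial B_R$, the form $u-w$ lies in $W^{1,p}_{\delta,T}(B_R)$ and is admissible in the difference of the weak formulations. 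Using that $(F)_{B_R}$ is constant and $\nu\wedge(u-w)=0$ (so that $\int_{B_R}\langle (F)_{B_R},d(u-w)\rangle=0$), together with monotonicity \eqref{monotonicity}, H\"older's inequality and the pointwise bound $|du-dw|^{p}\le c|V(du)-V(dw)|^{2}$ (valid for $p\ge 2$), an absorption gives
\begin{equation*}
\int_{B_R}|V(du)-V(dw)|^{2} \le c\int_{B_R}|F-(F)_{B_R}|^{p'}\le c R^{\lambda}.
\end{equation*}
Lemma \ref{secondcomparison2} combined with minimality $\int|dw|^{p}\le\int|du|^{p}$ and the local $L^{\infty}$-bound on $du$ (available by adapting Theorem \ref{gradientsupboundgeneral} to the present divergence-form source, or by a De\,Giorgi-type argument) yields, since $\omega(r)=r^{\alpha}$,
\begin{equation*}
\int_{B_{R/2}}|V(dw)-V(dv)|^{2}\le c R^{2\alpha}\int_{B_R}|dw|^{p}\le c R^{n+2\alpha}.
\end{equation*}

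Combining these two comparison estimates with the excess decay \eqref{Vdvscaling} for $V(dv)$ from Theorem \ref{theoremhomogeneousfrozen}(i) via standard triangle inequalities produces the iterative inequality
\begin{equation*}
\Phi(B_{\rho})\le c\Bigl(\tfrac{\rho}{R}\Bigr)^{\!n+2\beta_{2}}\Phi(B_R) + c R^{\lambda} + c R^{n+2\alpha},\qquad 0<\rho\le R/2.
\end{equation*}
The assumption $\lambda<\min\{n+2\alpha,\,n+2\beta_{2}\}$ is precisely what allows the classical Campanato iteration lemma to deliver $\Phi(B_R)\le c R^{\lambda}$ on all sufficiently small balls, hence $V(du)\in\mathcal{L}^{2,\lambda}_{\mathrm{loc}}(\Omega)$.

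For the exponent conversion, the lower bound in \eqref{constant cv} combined with $(|z_{1}|+|z_{2}|)^{p-2}\ge|z_{1}-z_{2}|^{p-2}$ yields the pointwise estimate $|V(z_{1})-V(z_{2})|^{2}\ge c|z_{1}-z_{2}|^{p}$ for $p\ge 2$. Together with the Fubini--Jensen identity $|B_R|\int_{B_R}|g-g_{B_R}|^{r}\le\int_{B_R}\!\int_{B_R}|g(x)-g(y)|^{r}\,dx\,dy$, this produces
\begin{equation*}
\int_{B_R}|du-(du)_{B_R}|^{p}\le c\int_{B_R}|V(du)-(V(du))_{B_R}|^{2}\le c R^{\lambda},
\end{equation*}
i.e. $du\in\mathcal{L}^{p,\lambda}_{\mathrm{loc}}$. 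The power-mean inequality $(\fint|\cdot|^{2})^{1/2}\le(\fint|\cdot|^{p})^{1/p}$ then upgrades this to $\int_{B_R}|du-(du)_{B_R}|^{2}\le c R^{\mu}$ with $\mu=\frac{np-2n+2\lambda}{p}$, so $du\in\mathcal{L}^{2,\mu}_{\mathrm{loc}}$. Finally, since $\delta u=0$, the linear local Campanato estimate \eqref{local estimate campanato} of Theorem \ref{linearlocal estimates}(ii) applied componentwise (with $f=du$, $g=0$) transfers this regularity to $\nabla u\in\mathcal{L}^{2,\mu}_{\mathrm{loc}}(\Omega)$. The most delicate step is the first comparison estimate: the cancellation $\int\langle(F)_{B_R},d(u-w)\rangle=0$ together with the absorption via $|du-dw|^{p}\le c|V(du)-V(dw)|^{2}$ is what produces the clean bound $cR^{\lambda}$, and the compatibility $\lambda<n+2\beta_{2}$ is exactly what the iteration requires. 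The rest of the argument is standard bookkeeping, and the $V$-inversion step produces the precise Campanato exponent $\mu=(np+2\lambda-2n)/p$ stated in the theorem.
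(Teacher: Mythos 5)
There is a genuine gap in the second comparison estimate, and it sits at the heart of the argument. You bound
\begin{equation*}
\int_{B_{R/2}}\lvert V(dw)-V(dv)\rvert^{2}\le c\,R^{2\alpha}\int_{B_R}\lvert dw\rvert^{p}\le c\,R^{n+2\alpha}
\end{equation*}
by invoking a local $L^{\infty}$ bound on $du$ (hence on $dw$ via minimality). No such bound is available here: Theorem \ref{gradientsupboundgeneral} is proved for data $f\in L^{(n,1)}$, not for $\delta F$ with $F$ merely in $\mathcal{L}^{p',\lambda}$, and for $0\le\lambda<n$ the conclusion of the theorem itself is only a Morrey-type estimate, so $du$ is genuinely not locally bounded in this generality. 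Without the factor $R^{n}$ the best you can say a priori is $\int_{B_R}\lvert dw\rvert^{p}\le c\int_{B_R}\lvert du\rvert^{p}\le c$, which turns your iterative inequality into $\Phi(B_\rho)\le c(\rho/R)^{n+2\beta_2}\Phi(B_R)+cR^{\lambda}+cR^{2\alpha}$, and the term $cR^{2\alpha}$ destroys the iteration. The paper resolves exactly this difficulty by a bootstrap that your proposal skips: first one iterates the \emph{full} integral $\int_{B_\rho}\lvert V(du)\rvert^{2}$ (not the excess), using the sup estimate \eqref{supestimatefrozenhomogeneous} for the \emph{frozen comparison map} $v$ rather than for $u$, to obtain the Morrey bound $V(du)\in\mathrm{L}^{2,\lambda'}$ for every $\lambda'<n$; this already proves the theorem for $\lambda<n$. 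Only then is the excess-decay iteration run, with the problematic term estimated as $\omega(R)^{2}\int_{B_R}\lvert V(du)\rvert^{2}\le cR^{n-\varepsilon+2\alpha}$ from the Morrey information, and for $n<\lambda<n+2$ a further bootstrap (Campanato--H\"older embedding giving $V(du)$ locally bounded) upgrades this to $cR^{n+2\alpha}$. Your assertion of boundedness is thus circular: it is a consequence of the theorem in the range where it is true at all, and false otherwise.

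The remainder of your argument --- the first comparison estimate exploiting the cancellation of $(F)_{B_R}$ against $d(u-w)$, the conversion $V(du)\in\mathcal{L}^{2,\lambda}\Rightarrow du\in\mathcal{L}^{p,\lambda}\Rightarrow du\in\mathcal{L}^{2,(np-2n+2\lambda)/p}$ via \eqref{constant cv} and H\"older, and the final transfer to $\nabla u$ through the linear estimate \eqref{local estimate campanato} using $\delta u=0$ --- matches the paper's Step 2 and is correct. To repair the proof you need to insert the Morrey-space bootstrap (Steps 1a--1c of the paper) before running the excess-decay iteration.
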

\subsection{Proof of the Campanato estimate}
\paragraph{General setting}
Let $x_{0} \in \Omega$ and $0 < R <1 $ be such that $B(x_{0}, 2R) \subset \Omega.$ By lemma \ref{gauge fixing lemma}, it is enough to consider the system
\begin{align}\label{quasilinear maxwell local coulomb campanato}
\left\lbrace \begin{aligned}
\delta ( a(x) \lvert du \rvert^{p-2} du) )  &= \delta F   &&\text{ in } B_{R},\\
\delta u &= 0 &&\text{ in } B_{R}.
\end{aligned} 
\right. 
\end{align}
Now we define $w \in u + W_{\delta,T}^{1,p}\left( B_{R}; \Lambda^{k}\mathbb{R}^{n}\otimes \mathbb{R}^{N}\right)$ to be the unique solution of 
\begin{align}\label{quasilinear maxwell tangential homogeneous general campanato}
\left\lbrace \begin{aligned}
\delta ( a(x) \lvert dw \rvert^{p-2} dw) )  &= 0   &&\text{ in } B_{R},\\
\delta w &= 0 &&\text{ in } B_{R}, \\
\nu\wedge w &= \nu\wedge u &&\text{  on } \partial B_{R},
\end{aligned} 
\right. 
\end{align}
and $v \in w + W_{\delta,T}^{1,p}\left( B_{R}; \Lambda^{k}\mathbb{R}^{n}\otimes \mathbb{R}^{N}\right)$ to be the unique solution of 
\begin{align}\label{quasilinear maxwell tangential homogeneous frozen general campanato}
\left\lbrace \begin{aligned}
\delta ( a(x_{0}) \lvert dv \rvert^{p-2} dv) )  &= 0   &&\text{ in }  B_{R},\\
\delta v &= 0 &&\text{ in } B_{R}, \\
\nu\wedge v &= \nu\wedge w_{j} &&\text{  on } \partial B_{R}.
\end{aligned} 
\right. 
\end{align}
\paragraph{Comparison estimates}
\begin{lemma}\label{firstcomparisoncampanato}
	Let $u$ be as in \eqref{quasilinear maxwell local coulomb campanato} and $w$ be as in \eqref{quasilinear maxwell tangential homogeneous general campanato}.  Then we have the following inequality
	\begin{equation}\label{firstcomparisonestimate campanato}
	\int_{B_{R}} \left\lvert V(du) - V(dw) \right\rvert^{2}  \leq c \int_{B_{R}} \left\lvert F - \left( F \right)_{B_{R}} \right\rvert^{\frac{p}{p-1}}.
	\end{equation}
\end{lemma}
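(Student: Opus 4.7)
The strategy is the textbook one: subtract the weak formulations of $u$ and $w$, use the difference as a test function, exploit the constant-subtraction trick on the right-hand side, and close by Hölder combined with the monotonicity of $V$.

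First, by Proposition \ref{weakformulationprop} (and since $\delta F$ is the right-hand side, so the system has the natural divergence form), $u$ and $w$ satisfy
\begin{align*}
\int_{B_R} \left\langle a(x)|du|^{p-2}du ;\, d\phi\right\rangle &= \int_{B_R} \langle F;\, d\phi\rangle,\\
\int_{B_R} \left\langle a(x)|dw|^{p-2}dw ;\, d\phi\right\rangle &= 0,
\end{align*}
for every $\phi \in W^{1,p}_{\delta,T}(B_R; \Lambda^k\mathbb{R}^n \otimes \mathbb{R}^N)$. The key point is that $u - w \in W^{1,p}_{\delta,T}(B_R; \Lambda^k\mathbb{R}^n \otimes \mathbb{R}^N)$: indeed $\delta u = \delta w = 0$ in $B_R$ and $\nu\wedge(u-w) = 0$ on $\partial B_R$ by the boundary condition of \eqref{quasilinear maxwell tangential homogeneous general campanato}. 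So I may take $\phi = u - w$ and subtract the two identities to obtain
\begin{equation*}
\int_{B_R} a(x)\left\langle |du|^{p-2}du - |dw|^{p-2}dw;\, du-dw\right\rangle = \int_{B_R} \langle F;\, d(u-w)\rangle.
\end{equation*}

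Next, I want to replace $F$ by $F - (F)_{B_R}$ on the right-hand side. Since $(F)_{B_R}$ is a constant form, $\delta (F)_{B_R} = 0$, and since $\nu\wedge(u-w) = 0$ on $\partial B_R$, integration by parts gives
\begin{equation*}
\int_{B_R} \langle (F)_{B_R};\, d(u-w)\rangle = \int_{\partial B_R} \langle \nu\wedge(u-w);\, (F)_{B_R}\rangle + \int_{B_R} \langle u-w;\, \delta(F)_{B_R}\rangle = 0.
\end{equation*}
Hence the identity becomes
\begin{equation*}
\int_{B_R} a(x)\left\langle |du|^{p-2}du - |dw|^{p-2}dw;\, du-dw\right\rangle = \int_{B_R} \langle F - (F)_{B_R};\, du-dw\rangle.
\end{equation*}

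Finally, the left-hand side is bounded below via the monotonicity estimate \eqref{monotonicity} combined with $a \geq \gamma$ by $c^{-1}\int_{B_R} |V(du)-V(dw)|^2$. On the right-hand side I apply Hölder's inequality with exponents $(p',p)$, and since $p \geq 2$ the lower bound in \eqref{constant cv} gives $|du-dw|^p \leq c|V(du)-V(dw)|^2$, so that
\begin{equation*}
\left|\int_{B_R} \langle F - (F)_{B_R};\, du-dw\rangle\right| \leq \left(\int_{B_R} |F - (F)_{B_R}|^{p'}\right)^{1/p'}\left(\int_{B_R} |V(du)-V(dw)|^2\right)^{1/p}.
\end{equation*}
Setting $X := \int_{B_R} |V(du)-V(dw)|^2$ and $Y := \int_{B_R} |F - (F)_{B_R}|^{p'}$, I obtain $X \leq c\, Y^{1/p'} X^{1/p}$, i.e.\ $X \leq c\, Y$, which is exactly \eqref{firstcomparisonestimate campanato}.

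The only mildly delicate point is the vanishing of the constant-form term, which depends crucially on working in the space $W^{1,p}_{\delta,T}$ (the tangential trace condition is what kills the boundary integral); the rest is an essentially mechanical application of the standard $V$-monotonicity toolbox, and I expect no real obstacle.
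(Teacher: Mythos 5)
Your proof is correct and follows essentially the same route as the paper: subtract the weak formulations, test with $u-w\in W^{1,p}_{\delta,T}$, replace $F$ by $F-(F)_{B_R}$ (you justify this by integration by parts, which the paper merely asserts), and conclude via the monotonicity of $V$ together with $|du-dw|^p\le c|V(du)-V(dw)|^2$ for $p\ge 2$. The only cosmetic difference is in the final step, where you use H\"older and divide by a power of $X$ while the paper uses Young's inequality with $\varepsilon$ and absorbs; the two are equivalent.
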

\begin{proof}
	Weak formulation of \eqref{quasilinear maxwell local coulomb campanato} and \eqref{quasilinear maxwell tangential homogeneous general campanato} gives, 
	$$ \int_{B_{j}} a(x)\left\langle \lvert du \rvert^{p-2}du - \lvert dw \rvert^{p-2}dw; d\phi\right\rangle = \int_{B_{R}} \left\langle F ; d\phi\right\rangle = \int_{B_{R}} \left\langle F - \left(F\right)_{B_{R}}; d\phi\right\rangle , $$ 
	for all $ \phi \in W^{1,p}_{\delta,T}\left( B_{R};\Lambda^{k}\mathbb{R}^{n}\otimes \mathbb{R}^{N} \right). $
	Substituting $u -w$ in place of $\phi,$ we obtain, 
	$$ \int_{B_{R}} a(x)\left\langle \lvert du \rvert^{p-2}du - \lvert dw \rvert^{p-2}dw; du - dw\right\rangle = \int_{B_{R}} \left\langle F - \left(F\right)_{B_{R}}; du-dw\right\rangle . $$
	By classical monotonicity estimate \eqref{monotonicity}, we obtain, 
	\begin{align*}
	\int_{B_{R}} \left\lvert V(du) - V(dw) \right\rvert^{2}   &\leq \int_{B_{R}}\left( \left\lvert du \right\rvert + \left\lvert dw \right\rvert  \right)^{p-2}\left\lvert du  - dw\right\rvert^{2} \\ 
	&\leq c  \int_{B_{R}} a(x)\left\langle \lvert du \rvert^{p-2}du - \lvert dw \rvert^{p-2}dw; du - dw\right\rangle  \\&\leq \left\lvert \int_{B_{R}} \left\langle F - \left(F\right)_{B_{R}}; du-dw\right\rangle \right\rvert \\
	&\leq \int_{B_{R}} \left\lvert F - \left( F \right)_{B_{R}}\right\rvert \left\lvert du -dw  \right\rvert. 
	\end{align*}
	Now by Young's inequality with $\varepsilon > 0,$ we have, 
	\begin{align*}
	\int_{B_{R}} \left\lvert F - \left( F \right)_{B_{R}}\right\rvert &\left\lvert du -dw  \right\rvert \\&\leq \varepsilon \int_{B_{R}} \left\lvert d u - dw  \right\rvert^{p} + C_{\varepsilon} \int_{B_{R}} \left\lvert F - \left( F \right)_{B_{R}} \right\rvert^{\frac{p}{p-1}}\\
	&\leq c\varepsilon \int_{B_{R}} \left\lvert V(du) - V(dw) \right\rvert^{2}   + C{\varepsilon} \int_{B_{R}} \left\lvert F - \left( F \right)_{B_{R}} \right\rvert^{\frac{p}{p-1}}, 
	\end{align*}
	where in the last line we used the fact that $p \geq 2.$
	Choosing $\varepsilon>0$ small enough, we obtain the desired estimate. 
\end{proof}\smallskip 

\begin{lemma}\label{secondcomparison1campanato}
	Let  $w$ be as in \eqref{quasilinear maxwell tangential homogeneous general campanato} and $v$ be as in \eqref{quasilinear maxwell tangential homogeneous frozen general campanato}.  Then there exists a constant $c \equiv c\left( n, N, k, p, \gamma, L \right)$ such that we have the inequality
	\begin{align}\label{l1omega campanato}
	\fint_{B_{R}} \lvert V(dv) - V(dw) \rvert^{2}  &\leq c\left[ \omega\left(R\right) \right]^{2}  
	\fint_{B} \lvert V(dw) \rvert^{2}.
	\end{align}
\end{lemma}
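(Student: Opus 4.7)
The plan is to follow the scheme of Lemma \ref{secondcomparison1} almost verbatim, only adapted to the fixed ball $B_R$ rather than a dyadic chain. Concretely, I would first subtract the two weak formulations. By Proposition \ref{weakformulationprop}, $w$ satisfies
\[
\int_{B_R} a(x)\langle |dw|^{p-2}dw, d\phi\rangle = 0,
\]
and $v$ satisfies the analogous identity with $a(x_0)$ in place of $a(x)$, for every $\phi \in W^{1,p}_{\delta,T}(B_R;\Lambda^k\mathbb{R}^n\otimes\mathbb{R}^N)$. Rearranging gives
\[
\int_{B_R} a(x_0)\langle |dv|^{p-2}dv - |dw|^{p-2}dw, d\phi\rangle = \int_{B_R}(a(x)-a(x_0))\langle |dw|^{p-2}dw, d\phi\rangle.
\]

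Next, I would test with $\phi = v - w$, which lies in $W^{1,p}_{\delta,T}(B_R;\Lambda^k\mathbb{R}^n\otimes\mathbb{R}^N)$ by construction of $v$ in \eqref{quasilinear maxwell tangential homogeneous frozen general campanato}. Using the lower bound $a(x_0)\geq \gamma > 0$ together with the monotonicity estimate \eqref{monotonicity} on the left-hand side produces $\fint_{B_R}|V(dv)-V(dw)|^2$ modulo a constant. For the right-hand side, the Dini-continuity of $a$ gives $|a(x)-a(x_0)|\leq \omega(R)$ on $B_R$, so the right-hand side is controlled by $\omega(R)\fint_{B_R}|dw|^{p-1}|dv-dw|$.

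I would then factor $|dw|^{p-1}|dv-dw|$ as $(|dw|+|dv|)^{p/2}\cdot(|dw|+|dv|)^{(p-2)/2}|dv-dw|$ and apply \eqref{constant cv} to recognise the second factor as bounded by $|V(dv)-V(dw)|$. Young's inequality with a small parameter $\varepsilon>0$ then absorbs an $\varepsilon$-fraction of $\fint_{B_R}|V(dv)-V(dw)|^2$ to the left, leaving on the right a term of the form $c\varepsilon^{-1}[\omega(R)]^2\fint_{B_R}(|dw|+|dv|)^p$. Because $v$ minimises the functional $\int_{B_R}a(x_0)|d\xi|^p$ among competitors sharing the tangential trace and Coulomb condition of $w$ (Proposition \ref{minimizerexistenceprop}), we have $\int_{B_R}|dv|^p\leq c\int_{B_R}|dw|^p$, so the whole right-hand side is bounded by $c[\omega(R)]^2 \fint_{B_R}|dw|^p$. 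Finally, since $p\geq 2$ in the standing hypothesis of this section, $|V(dw)|^2 = |dw|^p$, and the claimed inequality \eqref{l1omega campanato} follows.

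There is no significant obstacle here; the only points to verify carefully are that $v-w$ is an admissible test form in $W^{1,p}_{\delta,T}$ (which is immediate from the definition of $v$) and that the minimisation property of $v$ provided by Proposition \ref{minimizerexistenceprop} indeed transfers the $L^p$-norm of $dv$ to that of $dw$ up to a constant. Both are routine, so the proof is essentially a transcription of Lemma \ref{secondcomparison1} with the chain parameter $R_i$ replaced by the single radius $R$ and $B_i$ replaced by $B_R$.
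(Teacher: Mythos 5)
Your proposal is correct and coincides with the paper's proof, which is simply the argument of Lemma \ref{secondcomparison1} transcribed to the single ball $B_{R}$: subtract the weak formulations, test with $v-w$, use \eqref{constant cv} and Young's inequality, and invoke the minimality of $v$ to bound $\int_{B_R}|dv|^p$ by $c\int_{B_R}|dw|^p$. The only cosmetic remark is that the identity $|V(dw)|^{2}=|dw|^{p}$ holds for every $p>1$ directly from the definition \eqref{definition V}, so it does not actually require the standing assumption $p\geq 2$.
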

\begin{proof} The proof follows by exactly the same arguments ( with the obvious notational changes ) as in lemma \ref{secondcomparison1}. 
\end{proof}\smallskip

\noindent Combining \eqref{firstcomparisonestimate campanato} with \eqref{l1omega campanato}, we have 
\begin{lemma}\label{secondcomparison}
	Let $p \geq 2$ and let  $u, v, w$ be as in \eqref{quasilinear maxwell local coulomb campanato}, \eqref{quasilinear maxwell tangential homogeneous general campanato} and \eqref{quasilinear maxwell tangential homogeneous frozen general campanato}, respectively. Then we have the following inequality
	\begin{align}\label{l1omegacombined}
	\int_{B_{R}} \lvert V(du) - V(dv) \rvert^{2}  &\leq c\left[ \omega\left(R\right) \right]^{2} \int_{B_{R}} \lvert V(du) \rvert^{2} + c \int_{B_{R}} \left\lvert F - \left( F \right)_{B_{R}} \right\rvert^{\frac{p}{p-1}}. \end{align}
\end{lemma}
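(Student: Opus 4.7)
\textbf{Proof proposal for Lemma \ref{secondcomparison}.} The natural plan is to insert $V(dw)$ between $V(du)$ and $V(dv)$ and then combine the two comparison estimates already at hand. Specifically, using the elementary inequality $|a-b|^{2} \le 2|a-c|^{2} + 2|c-b|^{2}$ with $a = V(du)$, $b = V(dv)$, $c = V(dw)$, one gets
\begin{equation*}
\int_{B_{R}} |V(du) - V(dv)|^{2} \;\le\; 2\int_{B_{R}} |V(du) - V(dw)|^{2} \;+\; 2\int_{B_{R}} |V(dw) - V(dv)|^{2}.
\end{equation*}
The first integral on the right is controlled directly by Lemma \ref{firstcomparisoncampanato}, which produces the $\int_{B_{R}} |F - (F)_{B_{R}}|^{p/(p-1)}$ term on the right-hand side of the target inequality.

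For the second integral, Lemma \ref{secondcomparison1campanato} gives
\begin{equation*}
\int_{B_{R}} |V(dv) - V(dw)|^{2} \;\le\; c\,[\omega(R)]^{2} \int_{B_{R}} |V(dw)|^{2},
\end{equation*}
after multiplying through by $|B_{R}|$. The only genuine issue is that the right-hand side involves $|V(dw)|^{2}$ rather than $|V(du)|^{2}$. To replace $dw$ by $du$ I would apply the triangle inequality once more, writing
\begin{equation*}
\int_{B_{R}} |V(dw)|^{2} \;\le\; 2\int_{B_{R}} |V(du)|^{2} \;+\; 2\int_{B_{R}} |V(du)-V(dw)|^{2},
\end{equation*}
and then use Lemma \ref{firstcomparisoncampanato} again on the last term. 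This plugs back in to give a bound of the form
\begin{equation*}
\int_{B_{R}} |V(du)-V(dv)|^{2} \;\le\; c\,[\omega(R)]^{2} \int_{B_{R}} |V(du)|^{2} \;+\; c\bigl(1 + [\omega(R)]^{2}\bigr)\!\int_{B_{R}} |F-(F)_{B_{R}}|^{p/(p-1)}.
\end{equation*}

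Finally, since $\omega$ is a modulus of continuity (uniformly bounded by $\omega(\operatorname{diam}\Omega)$ on the radii under consideration, or simply by $L-\gamma$ as an oscillation bound on $a$), the factor $1+[\omega(R)]^{2}$ is absorbed into the constant $c$, yielding exactly \eqref{l1omegacombined}. There is no real obstacle here; the only thing to double-check is the consistency between Lemma \ref{secondcomparison1campanato}, which is stated with averaged integrals, and the target inequality, which is in terms of unnormalised integrals — but because the factor $|B_{R}|^{-1}$ appears on both sides of \eqref{l1omega campanato}, multiplying through is harmless and the rest of the argument is pure triangle-inequality bookkeeping.
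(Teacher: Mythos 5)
Your proposal is correct and is exactly the argument the paper intends: the paper's "proof" is the single sentence "Combining \eqref{firstcomparisonestimate campanato} with \eqref{l1omega campanato}," and your triangle-inequality splitting through $V(dw)$, the replacement of $\int_{B_R}\lvert V(dw)\rvert^2$ by $\int_{B_R}\lvert V(du)\rvert^2$ via a second application of Lemma \ref{firstcomparisoncampanato}, and the absorption of the bounded factor $[\omega(R)]^2$ into the constant are precisely the omitted bookkeeping. No gaps.
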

Now we are ready to prove the Campanato estimate. \smallskip 
  
\begin{proof}{\textbf{of theorem \eqref{campanato estimates}}} We divide the proof in two steps. \smallskip 
	
	\noindent \emph{Step 1}
	First, we are going to prove that under the assumptions of the theorem, we have $V(du) \in \mathcal{L}_{loc}^{2,\lambda} \left(\Omega, \Lambda^{k}\mathbb{R}^{n}\otimes\mathbb{R}^{N} \right).$ Moreover, for any $\tilde{\Omega} \subset \subset \Omega,$ there exists a constant $c\equiv c(\tilde{\Omega }, n, N, k, p, \nu, L, \alpha ) > 0 $ such that the estimate
	\begin{equation}\label{Vdu campanato estimate}
	\left[ V(du) \right]_{\mathcal{L}^{2, \lambda}\left(\tilde{\Omega }; \Lambda^{k}\mathbb{R}^{n}\otimes\mathbb{R}^{N}\right)}^{2} \leq c \left( \lVert du \rVert_{L^{p}\left(\Omega ; \Lambda^{k}\mathbb{R}^{n}\otimes\mathbb{R}^{N}\right)}^{p} + \left[ F \right]_{\mathcal{L}^{p', \lambda}\left(\Omega ; \Lambda^{k}\mathbb{R}^{n}\otimes\mathbb{R}^{N}\right)}^{p'}\right) 
	\end{equation} holds. We first choose $x_{0} \in \Omega$ and radii $\rho, R >0$ such that $0 < 4\rho < R$ and $B_{R}(x_{0}) \subset \subset \Omega. $ Now we define the comparison functions $v$ and $w$ the same way as before in $B_{R}.$ We divide the proof in three substeps.\smallskip 
	
	\noindent \emph{Step 1a} We show the result for $0 < \lambda < n.$ Note that for $\lambda$ in this range, we have the identification $ \mathrm{L}^{2,\lambda} \simeq \mathcal{L}^{2,\lambda}.$ 
	We have, 
	\begin{align*}
	\int_{B_{\rho}} \lvert V(du) \rvert^{2} &\leq c   \left( \int_{B_{\rho}} \lvert V(du) - V(dv) \rvert^{2} +  \int_{B_{\rho}} \lvert V(dv) \rvert^{2} \right) \\
	&\stackrel{\eqref{l1omegacombined}}{\leq} c \left[ \omega\left(R\right) \right]^{2} \int_{B_{R}} \lvert V(du) \rvert^{2} + c \int_{B_{R}} \left\lvert F \right\rvert^{\frac{p}{p-1}} +  c \left( \rho \right)^{n}\sup\limits_{B_{\rho}}  \ \lvert dv \rvert^{p} \\
	&\stackrel{\eqref{supestimatefrozenhomogeneous}}{\leq} c \left[ \omega\left(R\right) \right]^{2} \int_{B_{R}} \lvert V(du) \rvert^{2} + c \int_{B_{R}} \left\lvert F \right\rvert^{\frac{p}{p-1}}  
	+  c \left( \frac{\rho}{R} \right)^{n} \int_{B_{R/2}} \lvert dv \rvert^{p} \\
	&\leq  c \left[\left( \frac{2\rho}{R} \right)^{n} + \left[ \omega\left(R\right) \right]^{2} \right] \int_{B_{R}} \lvert V(du) \rvert^{2} 
	+ c R^{\lambda}\left\lVert F \right\rVert^{p'}_{\mathrm{L}^{p', \lambda}}, 
	\end{align*}
	where in the last line we have used the fact that $v$ minimizes the functional $$v \mapsto \int_{B_{R}} a(x_{0})\left\lvert dv \right\rvert^{p} \quad  \text{ on } \quad u + W^{1,p}_{\delta, T} (B_{R}; \Lambda^{k}\mathbb{R}^{n}\otimes \mathbb{R}^{N}).$$
	Now choosing $R$ small enough and applying the standard iteration lemma (cf. Lemma 5.13 in \cite{giaquinta-martinazzi-regularity}) , we obtain the result for $0 < \lambda < n.$\smallskip 
	
	\noindent \emph{Step 1b} Now using  \eqref{Vdvscaling} and standard estimates,  we have, 
	\begin{align*}
	\int_{B_{\rho}} \lvert V(du)  &-\left( V(du) \right)_{B_{\rho}} \rvert^{2} \\ 
	&\leq  c \left( \frac{\rho}{R} \right)^{ n+ 2\beta_{1}} \int_{B_{R}} \lvert V(du) - \left( V(du) \right)_{B_{R}} \rvert^{2} + c  \int_{B_{R}} \lvert V(du)  - V(dv) \rvert^{2}  . \end{align*}
	Estimating the last integral with \eqref{l1omegacombined}, we get 
	the estimate 
	\begin{multline}\label{estimate for du}
	\int_{B_{\rho}} \lvert V(du)   -\left( V(du) \right)_{B_{\rho}} \rvert^{2}   \\ \leq c_{1}\left(\frac{\rho}{R} \right)^{n+ 2\beta_{1}}
	\int_{B_{R}} \lvert V(du) - \left( V(du) \right)_{B_{R}} \rvert^{2}  + c_{2} \left( \omega\left(R\right) \right)^{2}  
	\int_{B_{R}} \lvert V(du) \rvert^{2} \\ + c_{3} \int_{B_{R}} \lvert F - \left( F \right)_{B_{R}}\rvert^{p'} 
	\end{multline} 
	For $\lambda = n, $ since by step 1, $V(du) \in \mathrm{L}^{2, n-\varepsilon}(B_{R})$ for every $\varepsilon > 0,$ we choose $ \varepsilon > 0$  such that $ \lambda = n < n-\varepsilon + 2\alpha <  n + 2\beta $  and plugging the estimate 
	$$ c_{2} \left( \omega\left(R\right) \right)^{2}  
	\int_{B_{R}} \lvert V(du) \rvert^{2} \leq c R^{n -\varepsilon +2\alpha }$$ in \eqref{estimate for du}, we deduce  
	\begin{multline*}
	\int_{B_{\rho}} \lvert V(du)   -\left( V(du) \right)_{B_{\rho}} \rvert^{2}  \\ \leq c_{1}\left(\frac{\rho}{R} \right)^{n+ 2\beta_{1}}
	\int_{B_{R}} \lvert V(du) - \left( V(du) \right)_{B_{R}} \rvert^{2} 
	+ cR^{n-\varepsilon +2\alpha}+ \left[ F \right]_{\mathcal{L}^{p', \lambda}}^{p'} R^{\lambda}. 
	\end{multline*} 
	By the iteration lemma again (cf. Lemma 5.13 in \cite{giaquinta-martinazzi-regularity}), this proves $V(du) \in \mathcal{L}^{2,n}.$\smallskip  
	
	\noindent \emph{Step 1c} For $n < \lambda < n+2,$ we choose $\varepsilon > 0$ such that 
	$n < n-\varepsilon + 2\alpha <  \lambda .$ Then by the same arguments, $V(du) \in \mathcal{L}^{2, n-\varepsilon + 2\alpha}$ and thus $V(du) \in C^{0, \alpha - \frac{\varepsilon}{2}}_{loc}.$ In particular, $V(du)$ is bounded and thus, we have , 
	$$ c_{2} \left( \omega\left(R\right) \right)^{2}  
	\int_{B_{R}} \lvert V(du) \rvert^{2} \leq c \sup_{B_{R}}\left\lvert V(du) 
	\right\rvert^{2} R^{n +2\alpha }.$$ Now, plugging this back in \eqref{estimate for du} and using the iteration lemma again proves the estimate \eqref{Vdu campanato estimate} in the usual way.\smallskip 
	
	\noindent \emph{Step 2} Now for any ball $B_{\rho}$ compactly contained in $\Omega,$ we have, using the properties of mean, \eqref{constant cv} for $p \geq 2$ and the fact that $V$ is a bijection,   
	\begin{align*}
	\int_{B_{\rho}} \left\lvert du   -\left( du \right)_{B_{\rho}} \right\rvert^{p} &\leq 2 \int_{B_{\rho}} \left\lvert du   -V^{-1}\left( \left( V(du) \right)_{B_{\rho}}\right)  \right\rvert^{p}\\
	&\leq c \int_{B_{\rho}} \left\lvert V(du)   -\left( V(du) \right)_{B_{\rho}} \right\rvert^{2} \\
	&\stackrel{\eqref{Vdu campanato estimate}}{\leq} c \rho^{\lambda} \left( \lVert du \rVert_{L^{p}\left(\Omega ; \Lambda^{k}\mathbb{R}^{n}\otimes\mathbb{R}^{N}\right)}^{p} + \left[ F \right]_{\mathcal{L}^{p', \lambda}\left(\Omega ; \Lambda^{k}\mathbb{R}^{n}\otimes\mathbb{R}^{N}\right)}^{p'}\right). 
	\end{align*}
	This implies the estimate 
	\begin{align*}
	\int_{B_{\rho}} \left\lvert du   -\left( du \right)_{B_{\rho}} \right\rvert^{2} &\stackrel{\text{H\"{o}lder}}{\leq } c \rho^{\frac{n(p-2)}{p}}\left( \int_{B_{\rho}} \left\lvert du   -\left( du \right)_{B_{\rho}} \right\rvert^{p}\right)^{\frac{2}{p}}  \\
		&\leq c \rho^{\frac{np-2n + 2\lambda }{p}} \left( \lVert du \rVert_{L^{p}\left(\Omega ; \Lambda^{k}\mathbb{R}^{n}\otimes\mathbb{R}^{N}\right)}^{p} + \left[ F \right]_{\mathcal{L}^{p', \lambda}\left(\Omega ; \Lambda^{k}\mathbb{R}^{n}\otimes\mathbb{R}^{N}\right)}^{p'}\right)^{\frac{2}{p}}. 
	\end{align*}
	This implies $du \in \mathcal{L}_{loc}^{2,\frac{np-2n + 2\lambda }{p}}\left(\Omega, \Lambda^{k}\mathbb{R}^{n}\otimes\mathbb{R}^{N} \right)$ along with the corresponding estimates. By virtue of \eqref{local estimate campanato}, this implies that for any $B_{r} \subset \subset \Omega,$ we have  the estimate 
	\begin{multline}\label{campanato for grad u}
	\left[ \nabla u \right]_{\mathcal{L}^{2, \frac{np-2n + 2\lambda}{p}} \left( B_{r/2}; \Lambda^{k}\mathbb{R}^{n}\otimes\mathbb{R}^{N}\right)}^{2} \\ \leq c \left( \lVert du \rVert_{L^{p}\left(B_{r} ; \Lambda^{k}\mathbb{R}^{n}\otimes\mathbb{R}^{N}\right)}^{p} + \left[ F \right]_{\mathcal{L}^{p', \lambda}\left(B_{r} ; \Lambda^{k}\mathbb{R}^{n}\otimes\mathbb{R}^{N}\right)}^{p'}\right)^{\frac{2}{p}}.
	\end{multline} 
\end{proof} 
\subsection{Proof of Theorem \ref{holder theorem}}
\begin{proof}[Proof of theorem \eqref{holder theorem}]
	Given any ball $B_{R} \subset \subset \Omega$ and given $f,$ we first solve the system 
	\begin{align}\label{changing into divergenceform}
	\left\lbrace \begin{aligned}
	dF &= 0 &&\text{ in } B_{R} \\
	\delta F &= f &&\text{ in } B_{R} \\
	\nu\wedge F &= 0 &&\text{ on } \partial B_{R}.
	\end{aligned} \right. 
	\end{align} 
	Thus, along with lemma \ref{gauge fixing lemma}, local estimates for the system \eqref{main equation holder} boils down to proving estimate for the system \eqref{quasilinear maxwell local coulomb campanato}. Also, linear estimates for the system \eqref{changing into divergenceform} implies the following 
	\begin{itemize}
		\item If $f \in L^{q}$ with $q > n,$ then $F \in W^{1,q} \hookrightarrow C^{0, \frac{q-n}{q}} \simeq \mathcal{L}^{p', n + \frac{p'(q-n)}{q}}.$
		\item If $f \in L^{n},$ then $F \in W^{1,n} \hookrightarrow VMO,$ which implies that for any ball $B_{r} \subset \subset B_{R},$ the BMO seminorm $\left[F\right]_{\mathcal{L}^{p', n}\left( B_{r}; \Lambda^{k+1}\mathbb{R}^{n}\otimes\mathbb{R}^{N} \right)} \rightarrow 0$ as $r \rightarrow 0. $ 
		\item If $f \in L^{(n, \infty )},$ then $F \in W^{1,(n, \infty)} \hookrightarrow BMO.$
	\end{itemize} 
In the first and the third case, the conclusion of theorem \eqref{holder theorem}
follows immediately from these embeddings and theorem \ref{campanato estimates}. For the second case, we note that by the estimate \eqref{campanato for grad u} for $\lambda = n$, we have, for any ball $B(x,r) \subset \subset B_{R} \subset \subset \Omega,$ 
\begin{align*}
\left[ \nabla u \right]_{\mathcal{L}^{2,n }(B(x,r/2))} \leq  c \left( \lVert du \rVert_{L^{p}\left( B(x,r) \right)}^{p} + \left[ F \right]_{\mathcal{L}^{p', \lambda}\left( B(x,r) \right)} \right)^{\frac{2}{p}}. 
\end{align*} Since $du \in L^{p}_{loc}\left(\Omega; \Lambda^{k}\mathbb{R}^{n}\otimes\mathbb{R}^{N} \right)$ and $F$ is VMO, the right hand side of the above estimate can be made arbitrarily small by choosing $r$ small enough. This concludes the proof.  \end{proof}

\section*{Acknowledgements}
The author thanks Giuseppe Mingione for attracting his attention to the question of continuity of the exterior derivative and the enlightening discussions. The present work and the author also drew much of its inspiration from the work of Mingione and co-authors for the $p$-laplacian system. The author warmly thanks Jan Kristensen for helpful and encouraging discussions.

\end{document}